\newcommand{\Fal}[1]{\ensuremath{F\langle{#1}\rangle}}
\newcommand{\Id}[1]{\ensuremath{\textnormal{Id}({#1})}}
\newcommand{\gen}[1]{\ensuremath{\langle{#1}\rangle}}
\newcommand{\Gstar}{\ensuremath{(G,\ast)}-}
\newcommand{\IdGstar}[1]{\ensuremath{\textnormal{Id}^{\sharp}({#1})}}
\newtheorem{theorem}{Theorem}[section]
\newtheorem{example}[theorem]{Example}
\newtheorem{lemma}[theorem]{Lemma}
\newtheorem{corollary}[theorem]{Corollary}
\newtheorem{remark}[theorem]{Remark}
\newtheorem{proposition}[theorem]{Proposition}
\newtheorem{definition}[theorem]{Definition}
\newcolumntype{C}[1]{>{\centering\let\newline\\\arraybackslash\hspace{0pt}}m{#1}}
\numberwithin{equation}{section}
\begin{document}
	\title{On the colength sequence of algebras with graded involution}
	
\author[W. Q. COTA, R. B. DOS SANTOS AND A. C. VIEIRA]{W. Q. COTA, R. B. DOS SANTOS AND A. C. VIEIRA$^*$}
	
	\dedicatory{Departamento de Matemática, Instituto de Ciências Exatas, Universidade Federal de Minas Gerais. \\ Avenida Antonio Carlos 6627, 31123-970, Belo Horizonte, Brazil}

	\thanks{\footnotesize 
 	$^*$Corresponding author.}

  \thanks{{\it E-mail addresses:} quaresmawesley@gmail.com (Cota),  rafaelsantos23@ufmg.br (dos Santos), anacris@ufmg.br (Vieira).}

  \thanks{The first author was partially supported by FAPEMIG and the second and third authors were partially supported by FAPEMIG and by CNPq.}

\subjclass[2020]{Primary 16R10, 16R50, Secondary 16W10, 16W50, 20C30}
	
\keywords{polynomial identity, graded algebra, graded involution, colength}
	
	\begin{abstract} 
    
    In recent years, many results have been established regarding classifications of varieties whose colength sequences are bounded by a fixed constant. In this work, we explore this theme in the setting of algebras endowed with a graded involution, called $(G,*)$-algebras. We give an explicit description of the decomposition of the $\gen{n}$-cocharacter for some important $(G,*)$-algebras $A$, for every $\gen{n}=(n_1, \ldots, n_{2t})$. For each algebra $A$, the $n$th colength is defined as the number of irreducible components that appear in these decompositions. Our aim is to classify varieties whose $n$th colengths are bounded by a fixed constant. 
	\end{abstract}
	
	\maketitle

    \section{Introduction}

Determining the set of polynomial identities of an algebra is a notoriously challenging problem. However, the theory developed by Regev and Kemer has played a crucial role in understanding the asymptotic behavior of these identities. Their approach relies on the analysis of numerical invariants associated with the algebra, such as the codimension and colength sequences.

    Recall that an associative algebra $A$ over a field $F$ of characteristic zero is a PI-algebra if $A$ admits a nontrivial polynomial identity, that is, a nonzero polynomial $f$ in the free associative algebra $F\langle X \rangle$ that vanishes under all evaluations of elements of $A$. The set $\textnormal{Id(A)}$ of all polynomial identities of $A$ is an ideal invariant under all endomorphisms of the free algebra, called the $T$-ideal of $A$.  Since \textit{char}$(F)$ is zero, the $T$-ideal $\Id{A}$ is generated by a finite set of multilinear identities. Therefore, we consider the space $P_n$ of multilinear polynomials in the first $n$ variables.
    
    An efficient method to study the growth of polynomial identities satisfied by $A$ was introduced by Regev \cite{RG}. His approach is based on analyzing the dimension of the quotient space 
	$$P_n(A)=\frac{P_n}{P_n \cap \textnormal{Id}(A)}$$ and defining the $n$th codimension of $A$ as $c_n(A)=\mbox{dim}_F P_n(A)$. The growth of polynomial identities of $A$ is completely determined by the behavior of its codimension sequence and has been extensively studied as a fundamental tool in the theory of PI-algebras. A key result in this setting was obtained by Kemer \cite{KM}, who proved that the codimension sequence of a PI-algebra either grows exponentially or is polynomially bounded.

In this work, we focus on another significant invariant: the colength sequence of $A$. It is important to emphasize that $P_n(A)$ has the structure of a $S_n$-module. This allows us to consider its character $\chi_n(A)$, called the $n$th cocharacter of $A$ and compute $l_n(A)$, the $n$th colength of $A$, which counts the number of $S_n$-modules appearing in the decomposition of $P_n(A)$ into irreducibles.

The codimension and colength sequences are closely related. In fact, Mishchenko, Regev and Zaicev \cite{Mishchenko} characterized algebras having polynomial growth of codimensions through the behavior of the colength sequence. They proved that $A$ has polynomial growth if and only if there exists a constant $k \geq 0$ such that $l_n(A) \leq k$, for all $n \geq 1$. Therefore, it is natural to investigate which varieties, among those with polynomial growth, have $n$th colength bounded by a fixed constant $k$. Motivated by this question, Giambruno and La Mattina presented in \cite{GLa, Daniela} the classification of varieties whose $n$th colength is bounded by $4$, for $n$ large enough.

    In recent years, the previous results have been extended to the class of algebras endowed with some additional structure (see, for instance, \cite{Cota, Cota2, MN, NV, Ana}).

 The main goal of this article is to study the so-called \Gstar algebras. More specifically, we consider $G$ a finite abelian group of order $t$ and $A$ a finite-dimensional algebra graded by $G$ and endowed with an involution $*$ that preserves the homogeneous components of the grading.
 In this setting, it is natural to investigate the corresponding $(G,*)$-identities and the sequences $c_n^{\sharp}(A)$ and $l_n^\sharp(A)$ of $(G,*)$-codimensions and $(G,*)$-colengths, respectively, in order to better understand the aspects of PI-algebras.

   In this article, we characterize the $(G,*)$-varieties of colength sequence bounded by $3$ by excluding certain algebras from the variety. As a consequence, we obtain a finite list of $(G,*)$-algebras generating all varieties with colength at most $4$. 
   

	\section{Graded involutions}
	
	Consider $G$ a multiplicative group with unit element $1$ and let $A$ be an associative algebra over a field $F$ of characteristic zero. We say that $A$ is a $G$-graded algebra if there exist subspaces $A_{g}$, ${g\in G}$, such that $A=\bigoplus_{g\in G} A_{g}$ and $A_{g}A_{h}\subseteq A_{gh}$, for all $g,h\in G.$ The subspace $A_g,\, g\in G$, is called homogeneous component of degree $g$. Moreover, the support of a $G$-graded algebra $A$ is defined as $\mbox{supp}(A)= \{g\in G \mid A_{g} \neq \{0\} \}.$
	
	Any algebra $A$ is a $G$-graded algebra endowed with trivial $G$-grading given by $A_1=A$ and $A_g=\{0\}$, for all $g\in G- \{1\}$.
    
    Let $UT_m$ be the algebra of upper triangular matrices of order $m$ and $\mathsf{h}=(h_1, \ldots, h_m) \in G^m$ be an arbitrary $m$-tuple of elements of $G$. We may use $\mathsf{h}$ to define a $G$-grading on $UT_m$ as follows: $(UT_m)_{g}= \mbox{span}_F\{e_{ij}\mid h_i^{-1}h_j=g\}, g\in G.$ This $G$-grading is referred to as the elementary $G$-grading induced by $\mathsf{h}$ and will also be considered in certain subalgebras of $UT_m$.
	
	If $H$ is a subgroup of $G$ and $A$ is an $H$-graded algebra then we may consider $A$ as a $G$-graded algebra by considering the homogeneous subspaces $A_h$, $h\in H$, and defining $A_{g}=\{0\}$, for all $g\in G- H$. We call this grading by $G$-grading induced by $H$.
	
	\begin{example} Let $g\in G$ be an element of prime order $p$. Denote by $C_p=\langle g \rangle $ the cyclic group of order $p$ and $FC_p$ to be the group algebra of $C_p$ over $F$. We consider the $G$-grading on $FC_p$ induced by $C_p$, given by $(FC_p)_{g^i}=F g^i$, for all $0\leq i \leq p-1$.\end{example}
	
	A subalgebra $B$ of a $G$-graded algebra $A$ is called a $G$-graded subalgebra of $A$ if it admits a decomposition $B=\bigoplus_{g\in G} (B \cap A_{g})_{g}$ as $G$-graded algebra. We refer to this $G$-grading as $G$-grading induced by $A$.
	
	
	Recall that an involution defined in an algebra $A$ is a linear map satisfying $(a^{*})^{*}=a$ and $(ab)^{*}=b^*a^*$, for all $ a,b\in A$, and an algebra endowed with an involution is called a $*$-algebra. In this work, we are interested in involutions compatible with the grading. We say that an involution $*$ defined in a $G$-graded algebra $A$ is a graded involution if $A_g^*= A_g$, for all $g\in G$. A $G$-graded algebra $A$ endowed with a graded involution $*$ is called a \Gstar algebra. 
    
	
	
	

    Observe that the existence of a graded involution on $A$ implies that $\mbox{supp}(A)$ is a commutative subset of $G$. Therefore, without loss of generality, in this paper we assume that $G$ is an abelian group.
	
	For instance, any commutative $G$-graded algebra is a $(G,*)$-algebra with trivial involution. In particular, we denote by $FC_p$ as the group algebra $FC_p$ with $G$-grading induced by $C_p$ and trivial involution. Moreover, any $*$-algebra is a $(G,*)$-algebra with trivial $G$-grading. As an example, consider $(FC_2)_{*}$ to be the group algebra $FC_2$ with trivial $G$-grading and involution given by $(a_1+a_2g)^{*}= a_1-a_2g$.
	
	\begin{example} Consider $G$ a group of even order and $g\in G$ an element of order $2$ generating a cyclic subgroup $C_2$ of $G$. Denote by $(FC_2)^{\sharp}$ the algebra $FC_2$ with $G$-grading induced by $C_2$ and involution given by $(a_1+a_2g)^{*}= a_1-a_2g$. \end{example}
	
	\begin{example} Denote by $M_{1,\rho}$ the algebra $M=F(e_{11}+e_{44})+F(e_{22}+e_{33})+ Fe_{12}+Fe_{34}\subset UT_4$ with trivial $G$-grading and reflection involution defined as $e_{ij}^{\rho}=e_{n-j+1,\, n-i+1}$. Also, for $g\in G- \{1\}$ denote by $M_{g,\rho}$ the algebra $M$ endowed with reflection involution and $G$-grading determined by
$$e_{11}+e_{44},e_{22}+e_{33}\in (M_{g,\rho})_{1}\mbox{ and }e_{12}, e_{34}\in (M_{g,\rho})_{g}.$$
	\end{example}
	
	If $*$ is a graded involution on $A$ then we can decompose 
	$$A=\underset{g\in G}{\bigoplus}(A_{g}^+ +  A_{g}^-)$$ into a direct sum of vector subspaces, where $A_{g}^+= \{a\in A_g \mid a^*=a\}$ is the homogeneous symmetric component of degree $g$ and $A_{g}^-= \{a\in A_g \mid a^*=-a\}$ is the homogeneous skew component of degree $g$. 

 From now on, we will consider $G$ a finite abelian group of order $t$. For all $g\in G$, consider $ X_g^{*}= \{x_{i,g}, x_{i,g}^{*}\mid g\in G, i\geq 1\}$ a countable set of variables and define $X= \bigcup_{g\in G} X_g^{*}$. Let $F\langle X\mid G, * \rangle$ be the free associative \Gstar algebra generated by $X$ over $F$, whose elements are called \Gstar polynomials. Consider $X^+={\bigcup_{g\in G} }X_g^+$ and $X^-={\bigcup_{g\in G} }X_g^-$, where $X_g^+= \{x_{i,g}^+:= x_{i,g}+x_{i,g}^*\mid g\in G, i\geq 1\}$ is the set of homogeneous symmetric variables of degree $g$ and $X_g^-= \{x_{i,g}^-=x_{i,g}-x_{i,g}^*\mid g\in G, i\geq 1\}$ is the set of homogeneous skew variables of degree $g$. Then, we have $\mathcal{F}:=\Fal{X\mid G,*}= \Fal{X^+\cup X^-}$. Now, consider $$\mathcal{F}_g=\mathrm{span}_F \{w_{i_1,g_{i_1}}\cdots w_{i_m,g_{i_m}}\mid g_{i_1 }\cdots g_{i_m}=g, w_{j,g_j}\in \{x_{j,g_j}^+,x_{j,g_j}^-\}\}$$ the space of elements that have homogeneous degree $g$ and notice that $\mathcal{F}= \bigoplus_{g\in G} \mathcal{F}_{g}$ has a structure of \Gstar algebra.   
	\begin{definition}
		A \Gstar polynomial $f\in \mathcal{F}$ is a \Gstar identity of a $(G,*)$-algebra $A$ if 
		$\lambda(f)=0$ for any evaluation $\lambda$ of the variables by elements of $A$ such that $\lambda(x_{i,g}^\varepsilon) \in A_g^\varepsilon$, for all $g\in G$, $\varepsilon\in \{+,-\}$ and $1\leq i\leq n$. In this case, we write $f\equiv 0$ on $A$.
	\end{definition}
	
	Let $\textnormal{Id}^{\sharp}(A)\subseteq \mathcal{F}$ be the set of all \Gstar identities of $A$. Notice that $\textnormal{Id}^{\sharp}(A)$ is an ideal invariant under all endomorphisms of $\mathcal{F}$ that preserve the grading and commute with the involution, which is called the $T_G^*$-ideal of $A$.
 
 	We say that two \Gstar algebras $A$ and $B$ are $T_G^*$-equivalent if $\textnormal{Id}^{\sharp} (A) = \textnormal{Id}^{\sharp}(B)$ and we write $A\sim_{T_G^*} B$. Moreover, we define $\mathcal{V}:=\textnormal{var}^{\sharp}(A)$ the $(G,*)$-variety generated by $A$, i.e., the class of all \Gstar algebras $B$ such that $\IdGstar{A}\subseteq \IdGstar{B}$.

			

  Since \textit{char}$(F)$ is zero then the $T_G^*$-ideal of $A$ is generated by its multilinear \Gstar polynomials. Thus, we consider $$P_n^{\sharp}=\mbox{span}_F\{w_{\sigma(1)} \cdots w_{\sigma(n)}\mid \sigma\in S_n,\; w_i\in \{x_{i,g}^+,x_{i,g}^-\},\; g\in G \}$$ the space of multilinear \Gstar polynomials of degree $n$. For $n \geq 1$, the $n$th $(G, *)$-codimension of a $(G, *)$-algebra $A$ is defined by
$$ c_n^{\sharp}(A) := \dim_F P_n^\sharp(A), \mbox{ where }P_n^\sharp(A)={P_n^{\sharp}}/{P_n^{\sharp} \cap \textnormal{Id}^{\sharp}(A)}. $$
	
    According to \cite{Lorena, RG}, we have that $A$ is a PI-algebra if and only its corresponding sequence of \Gstar codimensions is exponentially bounded. In particular, we are interested in $(G,*)$-varieties $\textnormal{var}^\sharp(A)$ of polynomial growth, that is, those satisfying $c_n^{\sharp}(A)\leq \alpha n^t$, for some constants $\alpha, t\geq 0$ and for all $n\geq 1$.

	

	Now we introduce some characterizations of $(G,*)$-varieties of polynomial growth. These results extend those presented in \cite{FR, GR} and can be found in \cite{Mara, Lorena}. 
	
	\begin{theorem}  \label{1}
		Let $\mathcal{V}$ be a variety generated by a finite-dimensional \Gstar algebra over a field $F$ of characteristic zero. Then the following conditions are equivalent
        \begin{enumerate}
           \item[1)] 
           $\mathcal{V}$ has polynomial growth. \item[2)]  for all prime $p$ dividing $|G|$ and for all $g \in G$, either $|G|$ is odd and $(FC_2)_{*}$, $FC_p$, $M_{g,\rho}$ $\notin \mathcal{V}$, or $|G|$ is even and $(FC_2)_{*}$, $(FC_2)^{\sharp}$, $FC_p$, $M_{g,\rho}$ $\notin \mathcal{V}$.
           \item[3)] $A\sim_{T_G^*} B_1 \oplus \cdots \oplus B_m$, where $B_1,$ $ \ldots, $ $B_m$ are finite-dimensional \Gstar algebras over $F$ with $\dim_F {B_i}/{J(B_i)}\leq 1$, for all $i=1, \ldots, m$, where $J(B_i)$ is the Jacobson radical of $B_i$.
           \end{enumerate} 
    
	\end{theorem}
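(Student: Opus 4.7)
I would organize the proof as a cycle of implications $1)\Rightarrow 2)\Rightarrow 3)\Rightarrow 1)$, following the template already developed for $G$-graded algebras in \cite{FR} and for $*$-algebras in \cite{GR}, and refined for the $(G,*)$-setting in the references \cite{Mara, Lorena} cited just before the statement. The guiding principle is that the four (or five) forbidden algebras listed in $2)$ are the minimal obstructions to polynomial growth, in the sense that each of them individually has $(G,*)$-codimension sequence of exponential growth.

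For $1)\Rightarrow 2)$ I would argue by contraposition. It suffices to verify that each of $(FC_2)_{*}$, $(FC_2)^{\sharp}$, $FC_p$ (for any prime $p\mid |G|$) and $M_{g,\rho}$ (for any $g\in G$) has a $(G,*)$-codimension sequence that grows at least like $n^{r}$ for every $r$; this can be done by exhibiting explicit multilinear $(G,*)$-polynomials in these algebras, essentially translating the known ordinary or $*$-codimension lower bounds through the forgetful functor. Since $\textnormal{Id}^{\sharp}(A)\subseteq \textnormal{Id}^{\sharp}(B)$ when $B\in\textnormal{var}^{\sharp}(A)$ gives $c_n^{\sharp}(B)\leq c_n^{\sharp}(A)$, the presence of any of those algebras in $\mathcal{V}$ would already obstruct polynomial growth.

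For $2)\Rightarrow 3)$ I would invoke the graded Wedderburn--Malcev decomposition for finite-dimensional $(G,*)$-algebras: up to $T_G^*$-equivalence one may assume $A=B_1\oplus\cdots\oplus B_m$ is a direct sum of $(G,*)$-algebras whose semisimple parts $B_i/J(B_i)$ are $(G,*)$-simple. The task is then to show that, under the hypothesis that none of the four algebras in $2)$ lies in $\mathcal{V}$, every such $(G,*)$-simple component must be at most one-dimensional over $F$. This reduces to classifying the finite-dimensional $(G,*)$-simple algebras and observing that the exclusion of $FC_p$ forces the grading on $B_i/J(B_i)$ to be trivial, the exclusion of $(FC_2)_{*}$ and $(FC_2)^{\sharp}$ forces the involution (in both the trivially-graded and nontrivially-graded cases) to also be trivial, and the exclusion of the $M_{g,\rho}$ kills the remaining positive-dimensional simple $*$-components, leaving only $\dim_F B_i/J(B_i)\leq 1$.

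Finally, $3)\Rightarrow 1)$ is the routine upper-bound direction: once $A$ is a direct sum of $(G,*)$-algebras with $\dim_F B_i/J(B_i)\leq 1$, one embeds $A$, up to $T_G^*$-equivalence, into a block-triangular $(G,*)$-algebra whose diagonal blocks are $1$-dimensional, and estimates $c_n^{\sharp}(A)$ by the growth of the multilinear $(G,*)$-polynomials vanishing on the nilpotent radical. Nilpotency of $J(A)$ bounds the number of radical factors appearing in a nonzero evaluation, and each commutative diagonal slot contributes only polynomially; combining these gives $c_n^{\sharp}(A)\leq\alpha n^{t}$. The main obstacle I expect is the structural step $2)\Rightarrow 3)$: one must have a clean classification of $(G,*)$-simple finite-dimensional algebras and then a case analysis matching each such algebra to one of the four forbidden witnesses, so that excluding the witnesses really forces the $1$-dimensional conclusion. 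The other two implications are, respectively, lower-bound computations on specific small algebras and a standard radical-plus-semisimple codimension estimate.
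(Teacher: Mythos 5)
First, a point of reference: the paper itself gives no proof of Theorem \ref{1}; it is quoted from \cite{Mara, Lorena}, so your attempt can only be measured against the argument in those references. Your overall architecture --- the cycle $1)\Rightarrow 2)\Rightarrow 3)\Rightarrow 1)$, with $1)\Rightarrow 2)$ obtained from exponential lower bounds on the listed witnesses and $3)\Rightarrow 1)$ from the standard ``nilpotent radical plus one-dimensional semisimple part'' codimension estimate --- is indeed the standard one and matches the cited proofs in outline.

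The genuine gap is in $2)\Rightarrow 3)$: you assign the wrong role to the algebras $M_{g,\rho}$, and as a result you omit the step that actually produces the block decomposition. Proving that every $(G,*)$-simple Wedderburn component of $A$ is one-dimensional with trivial grading and trivial involution (which is what the exclusions of $FC_p$, $(FC_2)_{*}$ and $(FC_2)^{\sharp}$ buy you, together with the fact that larger $(G,*)$-simple algebras already contain one of these witnesses in their varieties) is strictly weaker than statement 3). The algebra $UT_2$ has semisimple part $F\oplus F$, a sum of one-dimensional simple components, yet $\dim_F UT_2/J(UT_2)=2$ and its codimensions grow exponentially; so ``all simple components are $F$'' does not yield $A\sim_{T_G^*} B_1\oplus\cdots\oplus B_m$ with $\dim_F B_i/J(B_i)\leq 1$. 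The missing step is to show that distinct one-dimensional components $Fu_i$, $Fu_j$ of the semisimple part cannot be linked through the radical: if some nonzero homogeneous $a=u_iau_j$ of degree $g$ exists, then $\mathrm{span}_F\{u_i,u_j,a,a^*\}$ maps onto $M_{g,\rho}$ (which is exactly two orthogonal symmetric idempotents joined by a radical element and its image under $*$), contradicting $M_{g,\rho}\notin\mathcal{V}$. That, and not the elimination of higher-dimensional simple components, is the job of the $M_{g,\rho}$; without it your outline stops short of condition 3). The remaining two implications are correct in outline, though the case analysis you defer --- matching each nontrivial $(G,*)$-simple algebra to one of the forbidden witnesses --- is where most of the technical work in \cite{Lorena} actually lives.
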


 \section{\texorpdfstring{The $(G,*)$-colength}{The (G,*)-colength}}
	
	Recall that $G$ is a finite abelian group of order $t$ and consider $n=n_1+\cdots + n_{2t}$ a sum of $2t$ non-negative integers, which is denoted by $\langle n \rangle=(n_1, \ldots , n_{2t})$. Let $P_{\langle n \rangle}$ be the space of multilinear \Gstar polynomials where the first $n_1$ variables are symmetric of homogeneous degree $g_1=1$, the second $n_2$ variables are skew of homogeneous degree $1$, and so on until the penultimate $n_{2k-1}$ variables are symmetric of homogeneous degree $g_k$ and the last $n_{2k}$ variables are skew of homogeneous degree $g_k$. Define $$P_{\langle n \rangle}(A)= \frac{P_{\langle n \rangle}}{P_{\langle n \rangle} \cap \textnormal{Id}^{\sharp}(A)} $$ and consider $c_{\langle n \rangle}(A)= \dim_F P_{\langle n \rangle}(A)$ the $\langle n \rangle$-codimension of $A$. Observe that $$P_n^{\sharp} \cong \displaystyle \bigoplus_{\langle n \rangle } \displaystyle\binom{n}{ n_1, \ldots, n_{2t}} P_{\langle n \rangle},$$ where $\binom{n}{n_1, \ldots, n_{2t}}= \frac{n!}{n_1!\ldots n_{2t}!}$ is the multinomial coefficient, and so
	\begin{equation} \label{293}
		c_n^{\sharp}(A)= \underset{\langle n \rangle  }{\sum} \displaystyle\binom{n}{ n_1, \ldots, n_{2t} } c_{\langle n \rangle}(A). 
	\end{equation}
	
	Notice that there is a natural left action of $S_{\langle n \rangle}:= S_{n_1} \times \cdots \times S_{n_{2t}}$ on $P_{\langle n \rangle}$, where $S_{n_i}$ acts by permuting the corresponding variables associated to $n_i$, $1\leq i \leq 2t$. Since $P_{\langle n \rangle } \cap \textnormal{Id}^{\sharp}(A)$ is invariant under this action then $P_{\langle n \rangle }(A)$ inherits a structure of $S_{\langle n \rangle}$-module. Therefore, we can consider $\chi_{\langle n \rangle}(A)$ its character, called $\langle n \rangle$-cocharacter of $A$. It is known that the irreducibles $S_{\langle n \rangle}$-characters are outer tensor product of irreducible $S_{n_i}$-characters which are in one-to-one correspondence between partitions $\lambda_i \vdash n_i$. Hence, we consider $\chi_{\lambda_1} \otimes \cdots \otimes \chi_{\lambda_{2t}}$ the irreducible $S_{\langle n \rangle}$-character associated to a multipartition $\langle \lambda \rangle := ( \lambda_1, \ldots , \lambda_{2t})  \vdash \langle n \rangle$, where $\chi_{\lambda_i}$ is the irreducible $S_{n_i}$-character associated to $\lambda_i$. Moreover, its degree is given by $d_{\lambda_1} \cdots d_{\lambda _{2t}},$ where $d_{\lambda_i}$ is the degree of the $S_{n_i}$-character associated to $\lambda_i$ calculated by the hook formula \cite[Theorem 3.10.2]{Sagan}. Thus, by complete
	reducibility we may consider 
	\begin{equation} \label{77}
		\chi_{\langle n \rangle }(A)=\underset{\langle \lambda \rangle \vdash \langle n \rangle}{\sum} {m}_{\langle \lambda \rangle} \chi_{\lambda_1} \otimes \cdots \otimes \chi_{\lambda_{2t}}
	\end{equation} the decomposition of the $\langle n \rangle$-character of the space $P_{\langle n \rangle}(A)$ into irreducible, where ${m}_{\langle \lambda \rangle }$ denotes the multiplicity of the corresponding irreducible character. 
	For all possibilities $(n_{i_{1}},\ldots , n_{i_{2t}}),$ $ \ldots ,$ $ (n_{j_{1}},\ldots , n_{j_{2t}})$ of sums of $2t$ non-negative integers equal to $n$ we represent the set $\{\chi_{\langle n \rangle  }(A)\mid \langle n\rangle =(n_1, \ldots, n_{2t})  \}$ of all nonzero $\langle n \rangle $-cocharacters of $A$ by
	$$\mathcal{X}_n(A)=(\chi_{(n_{i_{1}},\ldots , n_{i_{2t}})}(A); \ldots ; \chi_{(n_{j_{1}},\ldots , n_{j_{2t}})}(A)).$$
	
	\begin{definition}
		Let $A$ be a \Gstar algebra with $\langle n \rangle$-cocharacter $\chi_{ \langle n\rangle }(A)$ as given in (\ref{77}). The $n$th \Gstar colength of $A$ is defined by
		$$l_n^{\sharp}(A)=\displaystyle \sum_{\langle n\rangle  }\sum\limits_{\displaystyle{
				\scriptsize{\begin{array}{c}
						\gen{\lambda}\vdash \gen{n}
		\end{array}}}}{m}_{\gen{\lambda}}.$$
	\end{definition}
	
In \cite{Mara2}, de Oliveira, dos Santos and Vieira  characterized the $(G,*)$-varieties of polynomial growth by the sequence of $(G,*)$-colength.

	\begin{theorem} \label{890}
		Let $A$ be a finite-dimensional $(G,*)$-algebra over a field of characteristic zero. Then, $A$ has polynomial growth if and only if there exists a constant $q\geq 0$ such that $l_n^{\sharp}(A)\leq q,$ for all $n\geq 1$.
	\end{theorem}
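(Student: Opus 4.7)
The strategy is to treat the two directions separately, using the structural characterization of polynomial growth provided by Theorem \ref{1}.

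For the \emph{only if} direction, assume that $\textnormal{var}^{\sharp}(A)$ has polynomial growth. By item 3) of Theorem \ref{1} we may replace $A$ by a $T_G^*$-equivalent direct sum $B_1\oplus\cdots\oplus B_m$ of finite-dimensional $(G,*)$-algebras with $\dim_F B_i/J(B_i)\leq 1$. Since $\IdGstar{A}=\IdGstar{B_1}\cap\cdots\cap\IdGstar{B_m}$, the module $\pnn(A)$ embeds into $\pnn(B_1)\oplus\cdots\oplus\pnn(B_m)$ as an $S_{\gen{n}}$-module; consequently $l_n^{\sharp}(A)\leq l_n^{\sharp}(B_1)+\cdots+l_n^{\sharp}(B_m)$, and it suffices to bound each summand. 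For a component $B$ with $\dim_F B/J(B)\leq 1$ the radical $J(B)$ is nilpotent, so one can adapt the Regev-type arguments used in \cite{Mara, Lorena} (where the same components enter the codimension estimate) to show that the nonzero multiplicities $m_{\gen{\lambda}}$ in (\ref{77}) only occur when each $\lambda_i$ is contained in a rectangle whose dimensions are bounded by constants depending only on $\dim_F B$ and the nilpotency index of $J(B)$. Since both the number of such multipartitions and the multiplicities themselves are then bounded independently of $n$, we obtain $l_n^{\sharp}(B)\leq q_B$ and hence $l_n^{\sharp}(A)\leq q$ for some constant $q\geq 0$.

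For the \emph{if} direction we argue by contrapositive: suppose that $\textnormal{var}^{\sharp}(A)$ does not have polynomial growth. Item 2) of Theorem \ref{1} then guarantees that at least one of the algebras $(FC_2)_{*}$, $(FC_2)^{\sharp}$, $FC_p$ (for some prime $p$ dividing $|G|$) or $M_{g,\rho}$ (for some $g\in G$) belongs to $\textnormal{var}^{\sharp}(A)$. Denote such an algebra by $B$. Because $\IdGstar{A}\subseteq\IdGstar{B}$, the $S_{\gen{n}}$-module $\pnn(B)$ is a quotient of $\pnn(A)$, so every irreducible constituent of $\chi_{\gen{n}}(B)$ is also a constituent of $\chi_{\gen{n}}(A)$, which gives $l_n^{\sharp}(B)\leq l_n^{\sharp}(A)$. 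It therefore suffices to prove that each of the four distinguished algebras has unbounded $(G,*)$-colength sequence; this contradicts the standing hypothesis that $l_n^{\sharp}(A)$ is bounded.

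The main technical obstacle lies precisely in the four explicit colength computations. Each of the algebras involved is low-dimensional, so in principle the space $\pnn(B)$ and the action of $S_{\gen{n}}$ on it can be described combinatorially through Young symmetrizers; the task is to exhibit, for each $B$, an infinite family of multipartitions $\gen{\lambda}$ contributing nontrivially to $l_n^{\sharp}(B)$. For instance, the commutativity of $FC_p$ spreads its cocharacter across all families of single-row multipartitions $\gen{\lambda}=((n_1),\ldots,(n_{2t}))$ distributed among the $p$ graded slots, while for $M_{g,\rho}$ the nilpotent off-diagonal part $Fe_{12}+Fe_{34}$ produces an unbounded family of hook multipartitions with positive multiplicity; analogous explicit inspections handle $(FC_2)_{*}$ and $(FC_2)^{\sharp}$. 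Once $l_n^{\sharp}(B)\to\infty$ is established in each of these four cases, the proof is complete.
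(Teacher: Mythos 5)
The paper itself gives no proof of Theorem \ref{890}: it is quoted from \cite{Mara2}. Your overall architecture --- proving the forward implication via the structural decomposition $A\sim_{T_G^*}B_1\oplus\cdots\oplus B_m$ with $\dim_F B_i/J(B_i)\leq 1$ from Theorem \ref{1}(3), and the converse by contrapositive via Theorem \ref{1}(2) together with the monotonicity and subadditivity of multiplicities recorded in Remark \ref{obs} --- is the natural and standard route, and is surely close in spirit to the cited proof. The reductions themselves (the diagonal embedding of $P_{\gen{n}}(A)$ into $\bigoplus_i P_{\gen{n}}(B_i)$, and $l_n^{\sharp}(B)\leq l_n^{\sharp}(A)$ when $B\in\textnormal{var}^{\sharp}(A)$) are correct.

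There is, however, a concrete error in the key technical step of your ``only if'' direction. You claim that for $B=F+J(B)$ the nonzero multiplicities $m_{\gen{\lambda}}$ occur only when \emph{each} $\lambda_i$ is contained in a rectangle of bounded dimensions. Since $\sum_i|\lambda_i|=n$, that condition is vacuous for large $n$ and would force $P_{\gen{n}}(B)=0$ eventually; it already fails for $B=F$, whose $\gen{n}$-cocharacter is $\chi_{((n)_{1^+})}$, a single row of unbounded length. The correct shape restriction is of a different kind: if $q$ is the nilpotency index of $J(B)$, then a nonzero multiplicity forces all $n_i$ with $i\neq 1$ (the non-trivially-graded and skew slots) to be at most $q$, and forces $\lambda_1$ to differ from the one-row partition $(n_1)$ by a bounded number of boxes --- i.e.\ $n$ minus the first part of $\lambda_1$ is bounded by a constant depending on $q$ and $\dim_F B$. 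It is this ``almost one long row'' statement, not bounded rectangles, that bounds both the number of admissible multipartitions and their multiplicities, and it needs to be stated and justified (e.g.\ by evaluating a highest weight vector with two boxes in distinct rows of $\lambda_1$ on $1_F$ and radical elements). In the converse direction your outline is sound, but the unboundedness of $l_n^{\sharp}$ for the four excluded algebras is asserted rather than proved: for $(FC_2)_*$ and $FC_p$ it does follow immediately from commutativity (each admissible $\gen{n}$ contributes one one-row multipartition, and the number of admissible $\gen{n}$ grows with $n$), but for $M_{g,\rho}$ one must actually exhibit a growing family of constituents --- the relevant ones are two-row shapes $(n-r,r)$ in the symmetric trivially-graded slot with multiplicities growing in $r$, coming from $e_{12},e_{34}$, rather than the ``hook multipartitions'' you mention. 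These computations are standard but they carry the weight of this direction and should be written out.
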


The theorem above provides a direct connection between codimension and colength sequences. We stress that classifying varieties of polynomial growth is not an easy task. Therefore, studying varieties through their colength sequence offers an alternative approach that may contribute to a better understanding of the varieties of polynomial growth.
	
	\begin{remark}\label{obs}
		Let $A$, $B$ and $A\oplus B$ be \Gstar algebras with the respective $\langle n\rangle$-cocharacter 
		$$ \chi_{\langle n\rangle }(A)=\displaystyle \sum_{\gen{\lambda}\vdash \gen{n}}
		{m}_{\gen{\lambda}}\chi_{\gen{\lambda}},\,\,\,\,\chi_{{\langle n\rangle }}(B)=\displaystyle \sum_{\gen{\lambda}\vdash \gen{n}}{m'}_{\gen{\lambda}}\chi_{\gen{\lambda}} \mbox{ and } \chi_{\langle n\rangle }(A\oplus B)=\displaystyle \sum_{\gen{\lambda}\vdash \gen{n}}\widetilde{m}_{\gen{\lambda}}\chi_{\gen{\lambda}}.$$

        \begin{enumerate}
            \item[1)]  If $B \in \textnormal{var}^{\sharp}(A)$ then ${m'}_{\gen{\lambda}} \leq {m}_{\gen{\lambda}}$, for all $\gen{n}$ and $\gen{\lambda} \vdash \gen{n}$.Therefore, $l_n^{\sharp}(B) \leq l_n^{\sharp}(A)$.
            
            \item[2)] For all $\gen{n}$ and $\gen{\lambda} \vdash \gen{n}$ we have $\widetilde{m}_{\gen{\lambda}} \leq {m}_{\gen{\lambda}}+{m'}_{\gen{\lambda}}$.

            \item[3)] If $B\in \textnormal{var}^{\sharp}(A)$ and $l_n^{\sharp}(A)=l_n^{\sharp}(B)$ then, by item $1)$, we must have ${m'}_{\gen{\lambda}} = {m}_{\gen{\lambda}}$, for all $\gen{n}$ and $\gen{\lambda} \vdash \gen{n}$. Thus, $c_n^{\sharp}(A)=c_n^{\sharp}(B)$ and so $P_n^{\sharp}\cap \textnormal{Id}^{\sharp}(A)=P_n^{\sharp}\cap \textnormal{Id}^{\sharp}(B)$, for all $n$. Therefore, we have $\textnormal{var}^{\sharp}(A)=\textnormal{var}^{\sharp}(B)$.

        \end{enumerate}

	\end{remark}


    There is a well-established method for computing the multiplicities $m_{\langle \lambda \rangle}$, which is based on the representation theory of the general linear group $GL_m$ in the context of $(G,*)$-algebras. We assume that the reader is familiar with the results concerning the generators of irreducible $GL_m$-modules, known as highest weight vectors. A detailed exposition of this theory will be omitted here (for further information, see \cite[Section 12.4]{Drensky}). 
    
    The construction of highest weight vectors in the context of $(G,*)$-algebras is naturally derived from the case of $(\mathbb{Z}_2,*)$-algebras, as developed in \cite[Section~3]{NV}. Here we recall the following result.

    \begin{proposition} \label{calmult}
       The multiplicity ${m}_{\gen{\lambda}}$ is nonzero if and only if there exists a multitableau $T_{\gen{\lambda}}$ associated to $\langle \lambda \rangle$ such that the corresponding highest weight vectors $f_{T_{\gen{\lambda}}} \notin \textnormal{Id}^{\sharp}(A)$. Furthermore, ${m}_{\gen{\lambda}}$ is equal to the maximum number of linearly independent highest weight vectors $f_{T_{\gen{\lambda}}}$ modulo $\textnormal{Id}^\sharp (A)$.
    \end{proposition}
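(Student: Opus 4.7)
The plan is to adapt the standard representation-theoretic bridge between $S_n$-cocharacters and $GL_m$-highest weight vectors — well known for ordinary PI-algebras, see \cite[Section 12.4]{Drensky} — to the \Gstar setting, following the construction carried out in \cite[Section 3]{NV} in the $\zi$-case. Fix a multipartition $\gen{\lambda}=(\lambda_1,\ldots,\lambda_{2t})\vdash \gen{n}$ and integers $m_i \geq \ell(\lambda_i)$. Consider the subalgebra of the relatively free \Gstar algebra $\mathcal{F}/\IdGstar{A}$ generated by $m_1$ symmetric variables of degree $1$, $m_2$ skew variables of degree $1$, and so on up to $m_{2t}$ skew variables of degree $g_t$. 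The group $GL_{\gen{m}}:=GL_{m_1}(F)\times\cdots\times GL_{m_{2t}}(F)$ acts blockwise by linear substitutions on these variables, and this action preserves $\IdGstar{A}$ because linear combinations inside one block remain inside the same homogeneous component $A_g^{\varepsilon}$. Hence $\mathcal{F}/\IdGstar{A}$ becomes a polynomial $GL_{\gen{m}}$-module, and its multihomogeneous component of multidegree $\gen{\lambda}$ decomposes into irreducible $GL_{\gen{m}}$-modules indexed by multipartitions, each generated by a highest weight vector $f_{T_{\gen{\lambda}}}$ attached to a multitableau $T_{\gen{\lambda}}$ of shape $\gen{\lambda}$.

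The second step is to transfer this decomposition back to the $S_{\gen{n}}$-module structure of $P_{\gen{n}}(A)$ via multilinearization. By the classical Schur–Weyl correspondence — applied componentwise to each factor $GL_{m_i}$ and $S_{n_i}$ — the multilinear part of an irreducible $GL_{\gen{m}}$-submodule of shape $\gen{\lambda}$ is an irreducible $S_{\gen{n}}$-submodule with character $\chi_{\lambda_1}\otimes\cdots\otimes\chi_{\lambda_{2t}}$, and conversely any irreducible $S_{\gen{n}}$-submodule of $P_{\gen{n}}(A)$ of that type is obtained by this procedure (which requires $m_i \geq \ell(\lambda_i)$ so that no partitions are truncated). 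This yields a linear bijection between irreducible $S_{\gen{n}}$-summands of $P_{\gen{n}}(A)$ of type $\gen{\lambda}$ and linearly independent highest weight vectors $f_{T_{\gen{\lambda}}}$ modulo $\IdGstar{A}$. In particular, $m_{\gen{\lambda}}>0$ if and only if there exists a multitableau $T_{\gen{\lambda}}$ with $f_{T_{\gen{\lambda}}}\notin \IdGstar{A}$, and $m_{\gen{\lambda}}$ equals the maximum number of linearly independent such $f_{T_{\gen{\lambda}}}$ modulo $\IdGstar{A}$.

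The main technical obstacle lies in the construction of the highest weight vectors compatible with the \Gstar structure — specifically, verifying that the blockwise $GL_{\gen{m}}$-action commutes with the involution and respects the grading (so that symmetric/skew substitutions remain symmetric/skew of the correct homogeneous degree), and that the \Gstar analogues of Young symmetrizers produce the claimed generators of the irreducible components. Once these are in place, the correspondence above is essentially formal, and the proposition follows exactly as in the $\zi$-case treated in \cite[Section 3]{NV}, with the $2$-block structure replaced by the $2t$-block structure dictated by $G$.
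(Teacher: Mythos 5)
Your proposal is correct and follows essentially the same route as the paper, which in fact states this proposition without proof, deferring exactly as you do to the standard $GL_{\gen{m}}$--$S_{\gen{n}}$ correspondence in \cite[Section 12.4]{Drensky} and to the construction of highest weight vectors for $(\mathbb{Z}_2,*)$-algebras in \cite[Section 3]{NV}. Your sketch of the blockwise linear-substitution action on the relatively free algebra and the passage to the multilinear part via Schur--Weyl duality is precisely the argument those references contain, adapted to the $2t$-block setting.
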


	In the next section, it will be convenient to use the notation $\langle \lambda \rangle = ((\lambda_{i_1})_{{g_{i_1}^+}}, (\lambda_{i_2})_{g_{i_2}^-}, \ldots  )$, where $(\lambda_{i_1})_{{g_{i_1}^+}}$ means that $(\lambda_{i_1})$ is the partition of $n_{2i_1-1}$, $(\lambda_{i_2})_{g_{i_2}^-}$ means that $(\lambda_{i_2})$ is a partition of $n_{2i_2}$ and so on. Also, we omit the empty partitions in this notation. 
	
	\section{\texorpdfstring{Constructing $(G,*)$-algebras}{Constructing (G,*)-algebras}}
	
	In this section, we exhibit the decomposition of the $\langle n \rangle$-cocharacter of particular \Gstar algebras.

    We start considering, for $k\geq 2$,  the subalgebra $\mathcal{G}_k=\langle 1,e_1,\ldots, e_k \mid e_ie_j=-e_je_i  \rangle$  of the infinite-dimensional Grassmann algebra. For this algebra, we consider the involutions $\psi, \tau$ and $\gamma$ defined, respectively, by
	\begin{align*}
		\psi( e_i)=e_i,\,\,\,\,\, \tau( e_i)=-e_i,\,\,\,\,\, \gamma( e_i)=(-1)^ie_i,\mbox{ for all }i=1,\ldots, k.
	\end{align*}

\begin{definition}
    For $g,h\in G$, we denote by $\mathcal{G}_{2,*}^{g,h}$ as the algebra $\mathcal{G}_2$ with involution $*\in \{\psi, \tau, \gamma\}$ and the $G$-grading determined by $$ 1\in (\mathcal{G}_{2,*}^{g,h})_1,\, e_1\in (\mathcal{G}_{2,*}^{g,h})_g ,\,  e_2\in (\mathcal{G}_{2,*}^{g,h})_h\mbox{ and } \,e_1e_2\in (\mathcal{G}_{2,*}^{g,h})_{gh} .$$ Moreover, we consider the algebra $\mathcal{G}_{3,\tau}^{1,1}$ as the algebra $\mathcal{G}_{3}$ with trivial grading and involution $\tau$.
\end{definition}

By \cite{MN, NV} we have $l_{n}^{\sharp}(\mathcal{G}_{2,\tau}^{1,1})=3$ and $l_n^{\sharp}(\mathcal{G}_{2,*}^{1,g})=l_{n}^{\sharp}(\mathcal{G}_{3,\tau}^{1,1})=4$, for all $*\in \{\tau, \gamma\}$ and $g\in G-\{1\}$. Also, if $|G|$ is even and $h\in G$ with $|h|=2$ then $l_{n}^{\sharp}(\mathcal{G}_{2,*}^{h,h})=4,$ for all $* \in \{\psi, \tau, \gamma\}$.

  From now on, we use the notation $x_{i,g}$ to indicate any variable in the set $\{x_{i,g}^+,x_{i,g}^-\}$. Moreover, for each $(G,*)$-algebra $A$, the
meaning will be clear from context, $r$ will run through all the elements in the set $G-\mbox{supp}(A)$.

	\begin{lemma} \label{18} For all $g\in G$ with $|g|> 2$ we have
		
		\begin{enumerate}
			
			\item[1)] $\textnormal{Id}^{\sharp}(\mathcal{G}_{2,\tau}^{g,g})=\langle x_{1,1}^-, x_{1,g}^+, x_{1,g^2}^+, x_{1,r}  \rangle_{T_G^*}.$
			
			\item[2)] $\textnormal{Id}^{\sharp}(\mathcal{G}_{2,\psi}^{g,g})=\langle x_{1,1}^-, x_{1,g}^-, x_{1,g^2}^+, x_{1,r}  \rangle_{T_G^*}.$
			
			\item[3)] $\textnormal{Id}^{\sharp}(\mathcal{G}_{2,\gamma}^{g,g})=\langle x_{1,1}^-, x_{1,g^2}^-, x_{1,g}^+x_{2,g}^+, x_{1,g}^-x_{2,g}^-,[{x_{1,1}^+},x_{2,g}^+],$ $[{x_{1,1}^+},x_{2,g}^-],$ $x_{1,r}\rangle_{T_G^*}.$
			
			\item[4)] $\mathcal{X}_n(\mathcal{G}_{2,\tau}^{g,g})= (\chi_{((n)_{1^+})}; \chi_{((n-1)_{1^+}, (1)_{g^-})};\chi_{((n-1)_{1^+}, (1)_{{g^2}^-})};\chi_{((n-2)_{1^+}, (1^2)_{g^-})} ).$

			\item[5)] $\mathcal{X}_n(\mathcal{G}_{2,\psi}^{g,g})=( \chi_{((n)_{1^+})}; \chi_{((n-1)_{1^+} , (1)_{g^+})};\chi_{((n-1)_{1^+}, (1)_{{g^2}^-})};\chi_{((n-2)_{1^+}, (1^2)_{g^+})}).$

			\item[6)] $\mathcal{X}_n(\mathcal{G}_{2,\gamma}^{g,g})= (\chi_{((n)_{1^+})}; \chi_{((n-1)_{1^+} , (1)_{g^+})};\chi_{((n-1)_{1^+} , (1)_{g^-})};\chi_{((n-1)_{1^+}, (1)_{{g^2}^+})};\chi_{((n-2)_{1^+}, (1)_{g^+} , (1)_{g^-})}).$
			\item[7)] $c_n^{\sharp}(\mathcal{G}_{2,\iota}^{g,g})=1+2n+\displaystyle\binom{n}{2}$ \;and\; $c_n^{\sharp}(\mathcal{G}_{2,\gamma}^{g,g})=1+3n+2\displaystyle\binom{n}{2}$, for all $\iota \in \{\tau, \psi\}$. 
   
   \item[8)] $l_{n}^{\sharp}(\mathcal{G}_{2,\iota}^{g,g})=4$ \;and\; $l_{n}^{\sharp}(\mathcal{G}_{2,\gamma}^{g,g})=5,$ for all $\iota \in \{\tau, \psi\}.$
		\end{enumerate}
	\end{lemma}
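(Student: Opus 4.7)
The eight items can be handled in a single pass, treating each of the three involutions $\tau$, $\psi$, $\gamma$ in parallel. First, I would verify that the polynomials listed in (1)--(3) are $(G,*)$-identities by a direct check on the basis $\{1, e_1, e_2, e_1 e_2\}$ of $\mathcal{G}_2$. The support identity $x_{1,r}$ for $r \notin \{1, g, g^2\}$ holds because only those three homogeneous components are non-zero, while the remaining ``empty $\varepsilon$-component'' identities record which of $A_1^{\pm}$, $A_g^{\pm}$, $A_{g^2}^{\pm}$ vanish under each $*$. For $\gamma$, the extra identities $x_{1,g}^+ x_{2,g}^+$ and $x_{1,g}^- x_{2,g}^-$ reflect $e_1^2 = e_2^2 = 0$, while the commutator identities record the centrality of $A_1^+ = F\cdot 1$ in $\mathcal{G}_2$.

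\textbf{Upper bound via canonical form.} The main step is showing these generate the full $T_G^*$-ideal. Working modulo the listed identities, I would reduce any monomial in $P_{\langle n\rangle}^{\sharp}$ to a canonical form in which all symmetric degree-$1$ variables are pushed to the left (by Grassmann centrality of $1$ for $\tau,\psi$, and by $[x_{1,1}^+, x_{2,g}^{\pm}] \equiv 0$ for $\gamma$). What remains is a product in at most two variables drawn from $X_g^\pm \cup X_{g^2}^\pm$, since any product of three or more Grassmann generators vanishes in $\mathcal{G}_2$ and for $\gamma$ the identities $x_{1,g}^\pm x_{2,g}^\pm \equiv 0$ further restrict the admissible pairs. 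A short case analysis isolates exactly the multidegrees appearing in (4)--(6) and produces an explicit spanning set of $P_{\langle n\rangle}^{\sharp}$ modulo identities whose dimension, summed via (\ref{293}), reproduces the codimension formulas of (7) as an upper bound on $c_n^\sharp$.

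\textbf{Lower bound, cocharacter, and colength.} To realize the claimed multiplicities I would invoke Proposition \ref{calmult}: for each multipartition listed in (4)--(6), exhibit an explicit highest weight vector and verify non-vanishing on a suitably chosen tuple of basis elements. For the one-row shapes this amounts to evaluating a single Grassmann-type variable at $e_i$ or $e_1 e_2$ while padding with $1$'s; for the hook $((n-2)_{1^+}, (1^2)_{g^-})$ in case $\tau$ the standard alternator $[x_{1,g}^-, x_{2,g}^-](x_{3,1}^+)^{n-2}$ evaluates on $(e_1, e_2, 1, \ldots, 1)$ to $2 e_1 e_2 \neq 0$; and for $\gamma$ the mixed multipartition $((n-2)_{1^+}, (1)_{g^+}, (1)_{g^-})$ is witnessed by $x_{1,g}^+ x_{2,g}^- (x_{3,1}^+)^{n-2}$, which evaluates to $-e_1 e_2$. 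Since the sum of the hook-formula degrees over the listed multipartitions matches the upper bound from the canonical-form step, every listed multiplicity must equal exactly $1$ and no further multipartition can contribute; this simultaneously completes (1)--(6). Items (7) and (8) then follow at once by summation through (\ref{293}) and by counting irreducible components in each $\mathcal{X}_n$. The most delicate point is the canonical-form analysis for $\gamma$: unlike $\tau$ and $\psi$, where the only surviving two-variable contribution is an alternator inside a single set of Grassmann-type variables, $\gamma$ admits a genuinely mixed product $x_{i,g}^+ x_{j,g}^-$ evaluating to $\pm e_1 e_2$, which forces the extra irreducible $\chi_{((n-2)_{1^+}, (1)_{g^+}, (1)_{g^-})}$ and is ultimately responsible for $l_n^{\sharp}(\mathcal{G}_{2,\gamma}^{g,g}) = 5$ instead of $4$.
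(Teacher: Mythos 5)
Your proposal is correct and follows essentially the same route as the paper: verify the listed polynomials are identities, reduce modulo them to an explicit spanning set of $P_n^{\sharp}$ whose size gives the codimension, exhibit highest weight vectors (via Proposition \ref{calmult}) for each listed multipartition, and conclude from the matching dimension count that all multiplicities equal $1$, from which items 7) and 8) follow by summation and counting. The only cosmetic difference is that the paper first pins down the codimension exactly by proving linear independence of the spanning set through evaluation, whereas you let the highest-weight-vector lower bound close the gap against the spanning upper bound; these are logically equivalent.
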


\begin{proof}
    We begin by proving item {$1)$}. Consider 
\[
I=\langle x_{1,1}^-,\, x_{1,g}^+,\, x_{1,g^2}^+,\, x_{1,r} \rangle_{T_G^*}.
\]  
It is straightforward to check that $I \subseteq \textnormal{Id}^{\sharp}(\mathcal{G}_{2,\tau}^{g,g}).$ Let $f \in P_n^{\sharp}\cap \textnormal{Id}^{\sharp}(\mathcal{G}_{2,\tau}^{g,g})$. Observe that the $(G,*)$-polynomials  
\[
x_{1,g^2}^-x_{2,g^2}^-, \quad 
x_{1,g^2}^-x_{2,g}^-, \quad 
[{x_{1,1}^+},x_{2,1}^+], \quad 
[{x_{1,1}^+},x_{1,g^2}^-], \quad 
x_{1,g}^- \circ x_{2,g}^-, \quad 
x_{1,g}^-x_{2,g}^-x_{3,g}^-,
\]  
where $x_{1,g}^- \circ x_{2,g}^- = x_{1,g}^-x_{2,g}^- + x_{2,g}^-x_{1,g}^-$ is the Jordan product, are consequences of the generators of $I$. Thus, we can reduce $f$ modulo $I$ and rewrite it as a linear combination of the $(G,*)$-polynomials
\begin{equation} \label{polyn}
    {x_{1,1}^+}\cdots x_{n,1}^+, \quad  
    {x_{1,1}^+}\cdots \widehat{x_{i,1}^+}\cdots x_{n,1}^+x_{l,g}^-, \quad 
    {x_{1,1}^+}\cdots \widehat{x_{i,1}^+}\cdots {\widehat{x_{j,1}^+}}\cdots x_{n,1}^+x_{i,g}^-x_{j,g}^-, \quad 
    {x_{1,1}^+}\cdots \widehat{x_{i,1}^+}\cdots x_{n,1}^+x_{l,g^2}^-,
\end{equation}
where $1\leq i<j \leq n$ and $\widehat{x_{i,g}^\epsilon}$ indicates omission of the variable $x_{i,g}^\epsilon$.  

Since $\textnormal{Id}^{\sharp}(\mathcal{G}_{2,\tau}^{g,g})$ is generated by its multihomogeneous $(G,*)$-identities, we may assume, without loss of generality, that $f$ is one of the following polynomials:
\[
f= \alpha  {x_{1,1}^+}\cdots x_{n,1}^+, \quad 
f= \beta {x_{1,1}^+}\cdots x_{n-1,1}^+x_{n,g}^-, \quad 
f= \gamma {x_{1,1}^+} \cdots x_{n-2,1}^+x_{n-1,g}^-x_{n,g}^-, \quad 
f= \delta {x_{1,1}^+}\cdots x_{n-1,1}^+x_{n,g^2}^-.
\]  

Considering the evaluation 
\[
x_{i,1}^+\mapsto 1, \ \mbox{ for all } i, \quad 
x_{n-1,g}^-\mapsto e_1, \quad 
x_{n,g}^-\mapsto e_2, \quad 
x_{n,g^2}^-\mapsto e_1e_2,
\]  
we obtain $\alpha=\beta=\gamma=\delta=0$. Hence $f\in I$, proving that $I=\textnormal{Id}^{\sharp}(\mathcal{G}_{2,\tau}^{g,g}).$  

Moreover, the above argument shows that no nonzero linear combination of the $(G,*)$-polynomials in (\ref{polyn}) belongs to $\textnormal{Id}^{\sharp}(\mathcal{G}_{2,\tau}^{g,g})$. Thus, these polynomials are linearly independent modulo the $T_G^*$-ideal of $\mathcal{G}_{2,\tau}^{g,g}$. Since they generate $P_n^{\sharp}$ modulo $P_n^{\sharp}\cap \textnormal{Id}^{\sharp}(\mathcal{G}_{2,\tau}^{g,g})$, the polynomials in (\ref{polyn}) form a basis for $P_n^{\sharp}$ modulo $\textnormal{Id}^{\sharp}(\mathcal{G}_{2,\tau}^{g,g})$. Consequently,  
\[
c_n^{\sharp}(\mathcal{G}_{2,\tau}^{g,g}) = 1+2n+\binom{n}{2}.
\]  

Items {2)} and {3)} follow by analogous arguments.  

For item {4)}, we use the decomposition given in (\ref{293}) and (\ref{77}), which yields
\[
c_n^{\sharp}(\mathcal{G}_{2,\tau}^{g,g})= d_{(n)} 
+ \binom{n}{n-1} d_{(n-1)}d_{(1)} 
+ \binom{n}{n-1} d_{(n-1)}d_{(1)} 
+ \binom{n}{n-2} d_{(n-2)}d_{(1^2)}.
\]  

It remains to show that
\[
m_{\langle \lambda_1 \rangle}\geq 1,\quad 
m_{\langle \lambda_2 \rangle}\geq 1,\quad 
m_{\langle \lambda_3 \rangle}\geq 1,\quad 
m_{\langle \lambda_4 \rangle}\geq 1,\mbox{ where }
\]  
\[
\gen{\lambda_1}= ((n)_{1^+}),\quad 
\gen{\lambda_2}= ((n-1)_{1^+}, (1)_{g^-}),\quad 
\gen{\lambda_3}= ((n-1)_{1^+}, (1)_{g^2}^-),\quad 
\gen{\lambda_4}= ((n-2)_{1^+}, (1^2)_{g^-}).
\]  

By Proposition~\ref{calmult}, the multiplicity $m_{\langle \lambda \rangle}$ is equal to the maximal number of linearly independent highest weight vectors modulo $\textnormal{Id}^\sharp(\mathcal{G}_{2,\tau}^{g,g})$ corresponding to $\langle \lambda \rangle$. Consider
\[
f_{T_{\gen{\lambda_1}}}=(x_{1,1}^+)^n, \quad 
f_{T_{\gen{\lambda_2}}}=(x_{1,1}^+)^{n-1}x_{1,g}^-, \quad 
f_{T_{\gen{\lambda_3}}}=(x_{1,1}^+)^{n-1}x_{1,g^2}^-, \quad 
f_{T_{\gen{\lambda_4}}}=(x_{1,1}^+)^{n-2}[x_{1,g}^-,x_{2,g}^-].
\]  

Considering the evaluation $x_{1,1}^+\mapsto 1, \quad 
x_{1,g}^-\mapsto e_1, \quad 
x_{2,g}^-\mapsto e_2, \quad 
x_{1,g^2}^-\mapsto e_1e_2,$ we see that $f_{T_{\gen{\lambda_1}}},$ $ f_{T_{\gen{\lambda_2}}},$ $ f_{T_{\gen{\lambda_3}}},$ $f_{T_{\gen{\lambda_4}}}\notin \textnormal{Id}^{\sharp}(\mathcal{G}_{2,\tau}^{g,g}).$  Therefore, $m_{\langle \lambda_i\rangle}\geq 1$, for all $i=1,2,3$, then the result follows.  

The remaining items can be proved analogously. 
\end{proof}

The proofs of the following propositions proceed by arguments entirely analogous to that of the previous proposition, and will thus be omitted for brevity.

	\begin{lemma} Consider $g\in G- \{1\}$ such that $|g|>2$. Then,
		\begin{enumerate}
			
			\item[1)] $\textnormal{Id}^{\sharp}(\mathcal{G}_{2,\tau}^{g,g^{-1}})=\langle x_{1,g}^+, x_{1,g^{-1}}^+, [{x_{1,1}^+},x_{2,1}^+], [{x_{1,1}^+},x_{2,1}^-], x_{1,1}^-x_{2,1}^-, x_{1,1}^-x_{2,g}^-,$ $x_{1,1}^-x_{2,g^{-1}}^-,$  $x_{1,g}^-\circ x_{2,g^{-1}}^-,$ $x_{1,r} \rangle_{T_G^*}.$
			
			\item[2)] $\textnormal{Id}^{\sharp}(\mathcal{G}_{2,\psi}^{g,g^{-1}})=\langle x_{1,g}^-,x_{1,g^{-1}}^-, [{x_{1,1}^+},x_{2,1}^+], [{x_{1,1}^+},x_{2,1}^-], x_{1,1}^-x_{2,1}^-,x_{1,1}^-x_{2,g}^+, x_{1,1}^-x_{2,g^{-1}}^+,$ $x_{1,g}^+\circ x_{2,g^{-1}}^+,$ $ x_{1,r}\rangle_{T_G^*}.$
			
			\item[3)] $\textnormal{Id}^{\sharp}(\mathcal{G}_{2,\gamma}^{g,g^{-1}})=\langle x_{1,1}^-, x_{1,g}^-, x_{1,g^{-1}}^+, x_{1,r} \rangle_{T_G^*} $.
			
			\item[4)] $\mathcal{X}_n(\mathcal{G}_{2,\tau}^{g,g^{-1}})=(\chi_{((n)_{1^+})}; \chi_{((n-1)_{1^+}, (1)_{1^-})};\chi_{((n-1)_{1^+}, (1)_{g^-})};\chi_{((n-1)_{1^+}, (1)_{(g^{-1})^-})};$ $\chi_{((n-2)_{1^+}, (1)_{g^-}, (1)_{(g^{-1})^-})})$.
			
			\item[5)] $\mathcal{X}_n(\mathcal{G}_{2,\psi}^{g,g^{-1}})=(\chi_{((n)_{1^+})}; \chi_{((n-1)_{1^+}, (1)_{1^-})};\chi_{((n-1)_{1^+}, (1)_{g^+})};\chi_{((n-1)_{1^+}, (1)_{(g^{-1})^+})};\chi_{((n-2)_{1^+},  (1)_{g^+}, (1)_{(g^{-1})^+})})$.
			
			\item[6)] $\mathcal{X}_n(\mathcal{G}_{2,\gamma}^{g,g^{-1}})=(\chi_{((n)_{1^+})}; \chi_{((n-1)_{1^+}, (1)_{g^+})};\chi_{((n-1)_{1^+}, (1)_{(g^{-1})^+})};\chi_{((n-2)_{1^+}, (1)_{g^+}, (1)_{(g^{-1})^-} )})$.
			
			\item[7)] $c_n^{\sharp}(\mathcal{G}_{2,\iota}^{g,g^{-1}})=1+3n+2\displaystyle\binom{n}{2}$ \;and\; $c_n^{\sharp}(\mathcal{G}_{2,\gamma}^{g,g^{-1}})=1+2n+2\displaystyle\binom{n}{2}$,  for all  $\iota \in \{\tau, \psi\}$. 
   
   \item[8)] $l_{n}^{\sharp}(\mathcal{G}_{2,\gamma}^{g,g^{-1}})=4$ \;and\; $l_{n}^{\sharp}(\mathcal{G}_{2,\iota}^{g,g^{-1}})=5,$  for all  $\iota \in \{\tau, \psi\}$.
			
		\end{enumerate}
	\end{lemma}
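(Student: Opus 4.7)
The proof plan is to follow the strategy of the proof of Lemma \ref{18}, treating the three involutions $\tau,\psi,\gamma$ separately. The key preparatory step is to read off the homogeneous $\pm$-decomposition of $\mathcal{G}_{2,*}^{g,g^{-1}}$. Since $|g|>2$, the support is the three-element set $\{1,g,g^{-1}\}$, and the nonzero homogeneous components are $F\cdot 1\oplus F\cdot e_1e_2$ in degree $1$, $F\cdot e_1$ in degree $g$, and $F\cdot e_2$ in degree $g^{-1}$. The involution then sorts these generators: for $\tau$ all of $e_1,e_2,e_1e_2$ are skew; for $\psi$ the $e_i$ are symmetric while $e_1e_2$ is skew; for $\gamma$ one has $e_1$ skew of degree $g$, $e_2$ symmetric of degree $g^{-1}$, and $e_1e_2$ symmetric of degree $1$. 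From this, the monomial identities $x_{1,h}^{\varepsilon}$ appearing in items $1)$--$3)$ are forced by the vanishing of the corresponding $A_h^{\varepsilon}$, and the identities $x_{1,r}$ come from the definition of the support. The Jordan-type relations $x_{1,g}^-\circ x_{2,g^{-1}}^-$ and $x_{1,g}^+\circ x_{2,g^{-1}}^+$ translate the anticommutation $e_1e_2+e_2e_1=0$, while the commutator and product relations encode $1\cdot a=a\cdot 1$ and $e_i^2=0$.

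Having verified $I\subseteq \textnormal{Id}^{\sharp}(\mathcal{G}_{2,*}^{g,g^{-1}})$ for the claimed generating set $I$, I would reduce an arbitrary multilinear $f\in P_n^{\sharp}$ modulo $I$. The consequences of $I$ allow every occurrence of a degree-$1$ symmetric variable to be moved to the left and cap the length of any nontrivial tail at two, since any product of two elements of $\{e_1,e_2,e_1e_2\}$ vanishes or is absorbed into $\pm e_1e_2$. This yields an explicit spanning set of $P_n^{\sharp}$ modulo $I$, directly analogous to the family (\ref{polyn}) in Lemma \ref{18}: a pure symmetric monomial $(x_{1,1}^+)^n$, monomials with a single tail variable of each admissible type, and, when admissible, monomials with a tail of length two formed from two distinct nonzero components. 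Evaluations on the finite set $\{1,e_1,e_2,e_1e_2\}$, chosen so that each coefficient is isolated, then show that these spanning elements are linearly independent modulo $\textnormal{Id}^{\sharp}(\mathcal{G}_{2,*}^{g,g^{-1}})$; this simultaneously yields $I=\textnormal{Id}^{\sharp}(\mathcal{G}_{2,*}^{g,g^{-1}})$ and the codimension counts of item $7)$.

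For items $4)$--$6)$, I plan to invoke Proposition \ref{calmult}. For each multipartition $\langle\lambda\rangle$ in the claimed cocharacter, I would exhibit a single explicit highest weight vector $f_{T_{\langle\lambda\rangle}}$ built from a power of $x_{1,1}^+$ times a short tail, namely $(x_{1,1}^+)^n$, $(x_{1,1}^+)^{n-1}w$, or $(x_{1,1}^+)^{n-2}w_1w_2$ (possibly commutator-bracketed), and evaluate on $\{1,e_1,e_2,e_1e_2\}$ to verify that $f_{T_{\langle\lambda\rangle}}\notin \textnormal{Id}^{\sharp}(\mathcal{G}_{2,*}^{g,g^{-1}})$. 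This gives $m_{\langle\lambda\rangle}\geq 1$ for every listed component. Summing the degrees $d_{\lambda_1}\cdots d_{\lambda_{2t}}$ of the listed components, weighted by the multinomial coefficients as in (\ref{293}) and (\ref{77}), reproduces exactly the codimension from item $7)$; consequently every listed multiplicity must equal $1$ and all other multiplicities must vanish. Item $8)$ is then immediate by counting the irreducibles in each list.

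The main obstacle is the reduction step in the $\tau$ and $\psi$ cases. Compared to Lemma \ref{18}, the element $e_1e_2$ now lives in the skew part of the degree-$1$ component (rather than the skew part of degree $g^2$), and the anticommutation $e_1e_2+e_2e_1=0$ translates to a Jordan-type identity \emph{mixing} variables of degrees $g$ and $g^{-1}$; combining these relations carefully to push an arbitrary monomial into the desired normal form requires more bookkeeping than in the $(g,g)$ case, and is the step where one must check that no further identity is being tacitly imposed. Once this reduction is in hand, the remaining verifications are direct evaluations on the four-element basis of the algebra.
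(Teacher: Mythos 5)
Your proposal is correct and follows essentially the same route as the paper, which in fact omits this proof entirely, declaring it ``analogous'' to that of Lemma \ref{18}: verify the inclusion of the candidate $T_G^*$-ideal, reduce to a normal form, kill coefficients by evaluating on $\{1,e_1,e_2,e_1e_2\}$ to get the codimension, and then match it against the degrees of the exhibited highest weight vectors via Proposition \ref{calmult}. One point your own ``no further identity tacitly imposed'' check should be made to catch explicitly: when $|g|=3$ the monomial $x_{1,g}^-x_{2,g}^-$ has homogeneous degree $g^2=g^{-1}\in\mathrm{supp}$, so it is not absorbed by the generators $x_{1,r}$, and only its Jordan part (not its commutator part) is visibly a consequence of the listed generators of items $1)$ and $2)$ --- so for that single order the reduction, and hence the stated generating set, needs a small supplement, although items $4)$--$8)$ are unaffected.
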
  
  
	\begin{lemma} Consider distinct elements $g,h\in G- \{1\}$ such that $gh\neq 1$. Then,
		\begin{enumerate}
			\item[1)] $\textnormal{Id}^{\sharp}(\mathcal{G}_{2,\tau}^{g,h})=\langle x_{1,1}^-, x_{1,g}^+, x_{1,h}^+$, $x_{1,gh}^+, x_{1,g}^-x_{2,g}^-,$ $x_{1,g}^-x_{2,gh}^-$, $x_{1,h}^-x_{2,h}^-,$ $x_{1,h}^-x_{2,gh}^-,$ $x_{1,gh}^-x_{2,gh}^-, x_{1,r}\rangle_{T_G^*}.$
			
			\item[2)] $\textnormal{Id}^{\sharp}(\mathcal{G}_{2,\psi}^{g,h})=\langle  x_{1,1}^-,x_{1,g}^-, x_{1,h}^-, x_{1,gh}^+,$ $x_{1,g}^+x_{2,g}^+$, $x_{1,g}^+x_{2,gh}^-,$ $x_{1,h}^+x_{2,h}^+,$ $x_{1,h}^+x_{2,gh}^-,$ $x_{1,gh}^-x_{2,gh}^-, x_{1,r}\rangle_{T_G^*} $.
			
			\item[3)] $\textnormal{Id}^{\sharp}(\mathcal{G}_{2,\gamma}^{g,h})=\langle x_{1,1}^-, {x_{1,g}^+, x_{1,h}^-},x_{1,gh}^-, x_{1,g}^+x_{2,g}^+,$ $x_{1,g}^+x_{2,gh}^+,$ $x_{1,h}^-x_{2,h}^-,$ $x_{1,h}^-x_{2,gh}^+$, $x_{1,gh}^+x_{2,gh}^+, x_{1,r}\rangle_{T_G^*} $. 
			
			\item[4)] $\mathcal{X}_n(\mathcal{G}_{2,\tau}^{g,h})=(\chi_{((n)_{1^+})}; \chi_{((n-1)_{1^+}, (1)_{g^-})};\chi_{((n-1)_{1^+},(1)_{h^-})};\chi_{((n-1)_{1^+},  (1)_{gh^-} )}; \chi_{((n-2)_{1^+}, (1)_{g^-}, (1)_{h^-})}).$
			
			\item[5)] $\mathcal{X}_n(\mathcal{G}_{2,\psi}^{g,h})=(\chi_{((n)_{1^+})}; \chi_{((n-1)_{1^+}, (1)_{g^+})};\chi_{((n-1)_{1^+}, (1)_{h^+})};\chi_{((n-1)_{1^+},  (1)_{gh^-} )};\chi_{((n-2)_{1^+}, (1)_{g^+}, (1)_{h^+})}).$
			
			\item[6)] $\mathcal{X}_n(\mathcal{G}_{2,\gamma}^{g,h})=(\chi_{((n)_{1^+})}; {\chi_{((n-1)_{1^+},  (1)_{g^-})};\chi_{((n-1)_{1^+}, (1)_{h^+})}};\chi_{((n-1)_{1^+},  (1)_{gh^+} )};\chi_{((n-2)_{1^+},  (1)_{g^+}, (1)_{h^-})}).$
			
			\item[7)] $c_n^{\sharp}(\mathcal{G}_{2,\iota}^{g,h})=1+3n+2\displaystyle\binom{n}{2}$ \;and\; $l_{n}^{\sharp}(\mathcal{G}_{2,\iota}^{g,h})=5,$
			for all $\iota \in \{\tau, \gamma, \psi\}$.
			
		\end{enumerate}
	\end{lemma}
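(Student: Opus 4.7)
The plan is to repeat, for each of the three involutions $*\in\{\tau,\psi,\gamma\}$ in parallel, the four-step argument used in the proof of Lemma~\ref{18}. First I would read off the nonzero homogeneous symmetric/skew components of $\mathcal{G}_{2,*}^{g,h}$ from the definition of $*$: for $\tau$ we obtain $(A_1)^+=F\cdot 1$, $(A_g)^-=Fe_1$, $(A_h)^-=Fe_2$, $(A_{gh})^-=Fe_1e_2$; for $\psi$ only the two middle components become symmetric, giving $(A_g)^+$ and $(A_h)^+$ instead; for $\gamma$ we get $(A_g)^-=Fe_1$, $(A_h)^+=Fe_2$ and $(A_{gh})^+=Fe_1e_2$. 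In every case $A_r=0$ for $r\in G-\{1,g,h,gh\}$, which accounts for the identity $x_{1,r}$. A direct evaluation on the basis $\{1,e_1,e_2,e_1e_2\}$ then shows that each of the remaining generators listed in items~1)--3) is a $(G,*)$-identity, using $e_1^2=e_2^2=0$ and $e_1e_2=-e_2e_1$.

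For the reverse inclusions, I would fix a multilinear $f\in P_n^{\sharp}\cap\textnormal{Id}^{\sharp}(\mathcal{G}_{2,*}^{g,h})$ and, using the stated generators together with their standard consequences (in particular $(G,*)$-analogues of $[x_{1,1}^+,y]\equiv 0$ for every variable $y$, and of the vanishing of every multilinear product containing two variables of nontrivial degree beyond those explicitly allowed), reduce $f$ modulo the claimed $T_G^*$-ideal $I_*$ to a linear combination of the monomials
\begin{align*}
&x_{1,1}^+\cdots x_{n,1}^+,\\
&x_{1,1}^+\cdots \widehat{x_{i,1}^+}\cdots x_{n,1}^+\,x_{l,k}^{\varepsilon(k)},\qquad k\in\{g,h,gh\},\\
&x_{1,1}^+\cdots \widehat{x_{i,1}^+}\cdots \widehat{x_{j,1}^+}\cdots x_{n,1}^+\,x_{i,g}^{\varepsilon(g)}x_{j,h}^{\varepsilon(h)},
\end{align*}
where $\varepsilon(k)\in\{+,-\}$ is the unique sign for which $(A_k)^{\varepsilon(k)}\neq 0$. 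By multihomogeneity the condition $f\equiv 0$ decouples across these monomials, and the evaluation $x_{i,1}^+\mapsto 1$, $x_{l,g}^{\varepsilon(g)}\mapsto e_1$, $x_{l,h}^{\varepsilon(h)}\mapsto e_2$, $x_{l,gh}^{\varepsilon(gh)}\mapsto e_1e_2$ forces every coefficient to vanish. Hence $I_*=\textnormal{Id}^{\sharp}(\mathcal{G}_{2,*}^{g,h})$, proving items~1)--3); counting the canonical monomials simultaneously yields $c_n^{\sharp}(\mathcal{G}_{2,*}^{g,h})=1+3n+2\binom{n}{2}$.

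For items~4)--6) I would apply Proposition~\ref{calmult} and exhibit, for each of the five multipartitions $\gen{\lambda}$ in the list, a highest weight vector: namely $(x_{1,1}^+)^n$, the three vectors $(x_{1,1}^+)^{n-1}x_{1,k}^{\varepsilon(k)}$ for $k\in\{g,h,gh\}$, and $(x_{1,1}^+)^{n-2}[x_{1,g}^{\varepsilon(g)},x_{1,h}^{\varepsilon(h)}]$, with signs prescribed by the involution. The same evaluation shows none of these is a $(G,*)$-identity, so $m_{\gen{\lambda}}\geq 1$ for each listed $\gen{\lambda}$. Plugging these five lower bounds together with the hook-formula degrees $d_{(n)}=d_{(n-1)}=d_{(n-2)}=1$ and the multinomial weights $1,n,n,n,n(n-1)$ into (\ref{293})--(\ref{77}) gives $c_n^{\sharp}\geq 1+3n+2\binom{n}{2}$, which matches the codimension just computed. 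Equality forces each of the five listed multiplicities to equal exactly $1$ and every other multiplicity to vanish, so $l_n^{\sharp}(\mathcal{G}_{2,*}^{g,h})=5$, completing item~7).

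The main obstacle is the canonical-form reduction in items~1)--3): for each of the three involutions one must verify carefully that the various commutators and Jordan products invoked during the reduction are genuine consequences of the ten explicit generators with the correct signs, since the sign pattern differs in each case. Once this bookkeeping is done the remainder of the argument is a direct transcription of the proof of Lemma~\ref{18}.
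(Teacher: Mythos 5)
Your proposal is correct and follows essentially the same route as the paper, which explicitly omits this proof as being ``entirely analogous'' to that of Lemma~\ref{18}: verify the generators are identities, reduce to the canonical spanning monomials to get $c_n^{\sharp}=1+3n+2\binom{n}{2}$, and then match this count against the five exhibited highest weight vectors via Proposition~\ref{calmult} to pin down the cocharacter and the colength. You also correctly identify the only real work---checking that the needed commutators and Jordan products (including the reversed products such as $x_{1,gh}^{-}x_{2,g}^{-}$, whose degree may accidentally land back in $\{1,g,h,gh\}$) are genuine consequences of the listed generators in each of the three sign patterns.
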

	
	For $k \geq 2$ denote the $k\times k$ identity matrix by $I_{k}$ and let $E_1 = \sum\limits_{i = 1}^{k-1} e_{i,i+1} \in UT_{k}$. Define the commutative subalgebra of $UT_k$ given by
	$$C_{k}= \{\alpha I_k + \underset{1\leq i<k}{\sum} \alpha_i E_1^i\mid \alpha, \alpha_i \in F\}$$
 and consider the following involution on $C_k$
 
 \begin{equation}\label{invCk}
     (\alpha I_{k} + \underset{1\leq i<k}{\sum} \alpha_i E_1^i)^*= \alpha I_{k} + \underset{1\leq i<k}{\sum} (-1)^i\alpha_i E_1^i.
 \end{equation}
	
 For each $ g \in G- \{1\}$ we define the following \Gstar algebras
		\begin{enumerate}
			\item[1)] $C_{k,*}$ is the algebra $C_k$ endowed with trivial $G$-grading and involution given in (\ref{invCk});
			\item[2)] $C_{k}^\mathsf{g}$ is the algebra $C_k$ endowed with $G$-grading induced by the $k$-tuple $\mathsf{g}=(1,g,g^2,$ $\ldots , g^{k-1})\in G^k$ and trivial involution;
			\item[3)]$C_{k,*}^\mathsf{g}$ is the algebra $C_k$ endowed with  $G$-grading induced by the $k$-tuple $\mathsf{g}=(1,g,g^2,$ $\ldots , g^{k-1})\in G^k$ and the involution given in (\ref{invCk}).
		\end{enumerate}

The next lemma can be consulted in \cite{RN, NV}.

	\begin{lemma} For all $k\geq 2$ and $g,h\in G-\{1\}$ with $|h|=2$ we have $l_{n}^{\sharp}(C_{2}^\mathsf{g})=l_{n}^{\sharp}(C_{2,*}^\mathsf{g})=2$ and $l_{n}^{\sharp}(C_{k,*})=l_{n}^{\sharp}(C_{k,*}^\mathsf{h})=l_{n}^{\sharp}(C_{k}^\mathsf{h})=k.$

	\end{lemma}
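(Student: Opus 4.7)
The plan is to dispatch the five algebras in parallel, exploiting that each $C_k$ is commutative and that $E_1^k = 0$. For every algebra $A$ in the list, I first describe the effective homogeneous components under the combined $G$-grading and involution. In $C_2^{\mathsf{g}}$ (trivial involution) one has $I_2$ symmetric of degree $1$ and $E_1$ symmetric of degree $g$; in $C_{2,*}^{\mathsf{g}}$ the involution (\ref{invCk}) flips $E_1 \mapsto -E_1$, so $E_1$ becomes skew of degree $g$. In $C_{k,*}$ (trivial grading) the symmetric part is spanned by $\{I_k, E_1^2, E_1^4, \ldots\}$ and the skew part by $\{E_1, E_1^3, \ldots\}$, all of degree $1$. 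In $C_k^{\mathsf{h}}$ with $|h|=2$ every element is symmetric, and $E_1^i$ has degree $h^i \in \{1, h\}$, splitting the basis by the parity of $i$. In $C_{k,*}^{\mathsf{h}}$ the degree-$1$ symmetric component is again $\{I_k, E_1^2, \ldots\}$ and the degree-$h$ skew component is $\{E_1, E_1^3, \ldots\}$. In each case only two slots of the $2t$-tuple $\gen{n}$ can contribute, since variables in the other slots evaluate to $0$ on $A$.

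Next, I would use the commutativity of $C_k$ to show that $\dim_F P_{\gen{n}}(A) \leq 1$: the identity $[x, y] \equiv 0$ (across all type combinations) forces every multilinear monomial, modulo $\textnormal{Id}^\sharp(A)$, to be the ordered product of its variables. I would then show the dimension equals $1$ exactly when the number $r$ of variables in the ``nilpotent'' slot (the $E_1$-side) satisfies $r \leq k-1$: evaluating the remaining variables at $I_k$ and each nilpotent-type variable at $E_1$ yields $E_1^r$, which is nonzero precisely when $r \leq k-1$.

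To pin down the cocharacter I would then argue that only the ``row-shape'' multipartition $\gen{\lambda} = ((n_1), (n_2), \ldots)$ can appear with nonzero multiplicity in $\chi_{\gen{n}}(A)$. By Proposition~\ref{calmult}, any highest weight vector attached to a multitableau with a column of length $\geq 2$ involves an antisymmetrization of two variables of the same type; since the elements of $A$ corresponding to such variables commute, the vector vanishes on $A$. Combined with the previous step, $\chi_{\gen{n}}(A)$ consists of a single trivial irreducible of multiplicity $1$ whenever $P_{\gen{n}}(A) \neq 0$.

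Finally, I would count admissible $\gen{n}$. For $C_2^{\mathsf{g}}$ and $C_{2,*}^{\mathsf{g}}$ the nilpotent-type count $r$ ranges over $\{0, 1\}$, giving $l_n^\sharp = 2$; for $C_{k,*}$, $C_k^{\mathsf{h}}$ and $C_{k,*}^{\mathsf{h}}$ it ranges over $\{0, 1, \ldots, k-1\}$ (for $n \geq k-1$), yielding $l_n^\sharp = k$. The main obstacle is the third step --- rigorously ruling out higher-shape partitions in the cocharacter --- but this reduces to the transparent observation that antisymmetrizing commuting elements yields zero, so the argument is routine once the framework of highest weight vectors is in place.
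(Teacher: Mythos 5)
Your argument is correct. The paper itself offers no proof of this lemma --- it simply refers the reader to \cite{RN, NV} --- so any self-contained argument is ``different'' by default; yours follows exactly the template the paper uses for its other computations (identify the effective homogeneous symmetric/skew components, span $P_{\gen{n}}$ modulo the ideal by ordered monomials, evaluate to detect nonvanishing, and read off the cocharacter), but it is considerably streamlined here because commutativity collapses each $P_{\gen{n}}(A)$ to dimension at most one, so the colength is just the number of admissible $\gen{n}$ and the highest-weight-vector machinery is barely needed. Two small points to tighten: in your third step you only exhibit one nonzero evaluation for $r\leq k-1$, but to conclude that exactly $k$ tuples $\gen{n}$ contribute you also need the converse, namely that for $r\geq k$ \emph{every} evaluation vanishes --- this is immediate since each variable in the nilpotent slot lands in $E_1 C_k$ and $E_1^k=0$, but it should be said; and the equality $l_n^{\sharp}=k$ only holds for $n\geq k-1$ (for smaller $n$ fewer tuples are admissible), which matches the ``for $n$ large enough'' convention used implicitly throughout the paper but is worth flagging.
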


	For $g\in G- \{1\}$ we consider $\mathcal{D}^g=\{C_{2,*}, C_2^\mathsf{g}, C_{2,*}^\mathsf{g}\}$ and denote $\mathcal{D}=\underset{g\in G- \{1\}}{\bigcup} \mathcal{D}^g.$ Also we consider the following sets of \Gstar algebras
\begin{equation} \label{comm}
\mathcal{D}_1=\{D_1\oplus D_2\mid   D_i\in \mathcal{D}, D_1\neq D_2\}\mbox{ and } 
\end{equation} $$\newline \mathcal{D}_2=\{D_1\oplus D_2\oplus D_3\mid   D_i\in \mathcal{D}, D_i\neq D_j \mbox{ for }i\neq j\}.    $$

	\begin{remark}
		For the previous algebras we have $l_n^{\sharp}(S_1)=3$ and $l_n^{\sharp}(S_2)=l_n^{\sharp}(C_{3,*} \oplus K)= l_n^{\sharp}(\mathcal{G}_{2,\tau}^{1,1} \oplus K)=4$, for all $S_1\in \mathcal{D}_1$, $S_2\in \mathcal{D}_2$ and $K\in \mathcal{D}- \{C_{2,*}\}.$ Moreover, if $|G|$ is even and $g\in G$ with $|g|=2$ then $l_n^{\sharp}(C_{3}^\mathsf{g}\oplus T_1)=l_n^{\sharp}(C_{3,*}^\mathsf{g}\oplus T_2)=4$, for all $T_1,T_2\in \mathcal{D}$ with $T_1\neq C_{2}^\mathsf{g}$ and $T_2\neq C_{2,*}^\mathsf{g}.$
	\end{remark}

	

Now, we consider the following lemma. 
 
	\begin{lemma}   Consider $g\in G$ with $|g|>2$ and $h\in \{1,g,g^2\}$. Then,
		\begin{enumerate}
			\item[1)] $\textnormal{Id}^{\sharp}(C_{3}^\mathsf{g})=\langle x_{1,h}^-, x_{1,g}^+x_{2,g}^+x_{3,g}^+, x_{1,g}^+x_{2,g^2}^+, x_{1,g^2}^+x_{2,g^2}^+ ,x_{1,r} \rangle_{T_G^*}$.
			\item[2)] $\textnormal{Id}^{\sharp}(C_{3,*}^\mathsf{g})=\langle x_{1,1}^-,x_{1,g^2}^-, x_{1,g}^+,[x_{1,h},x_{2,r}], x_{1,g^2}^+x_{2,g^2}^+,x_{1,g}^-x_{2,g^2}^+, x_{1,r}\rangle_{T_G^*}$. 
		
 \item[3)]   $\mathcal{X}_n(C_{3}^\mathsf{g})= (\chi_{((n)_{1^+})}; \chi_{((n-1)_{1^+},  (1)_{g^+})}; \chi_{((n-2)_{1^+},   (2)_{g^+})}; \chi_{((n-1)_{1^+} ,  (1)_{(g^2)^+})}).$
			\item[4)] $\mathcal{X}_n(C_{3,*}^\mathsf{g})=(\chi_{((n)_{1^+})}; \chi_{((n-1)_{1^+},   (1)_{g^-})}; \chi_{((n-2)_{1^+}, (2)_{g^+})}; \chi_{((n-1)_{1^+},  (1)_{{g^2}^+})}).$

   \item[5)] $c_n^{\sharp}(C_{3}^\mathsf{g})= c_n^{\sharp}(C_{3,*}^\mathsf{g})=1+2n+\displaystyle\binom{n}{2}$ \;and\; $l_{n}^{\sharp}(C_{3}^\mathsf{g})=l_{n}^{\sharp}(C_{3,*}^\mathsf{g})=4.$ 
		\end{enumerate}
	\end{lemma}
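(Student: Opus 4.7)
The plan is to mirror the template of Lemma \ref{18}: identify the $T_G^*$-ideals and compute the codimensions in Stage~1, match the cocharacter multiplicities to the codimensions in Stage~2, and read off the colengths in Stage~3.

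Stage~1 (items (1), (2) and the codimension in item (5)). Direct evaluation on the matrix units $I_3$, $E_1$, $E_1^2$ shows every generator of each candidate ideal is a $(G,*)$-identity: in $C_{3}^{\mathsf{g}}$ the trivial involution kills every skew variable, and $E_1^3 = 0$, $(E_1^2)^2 = 0$ yield the monomial identities; in $C_{3,*}^{\mathsf{g}}$ the decomposition $A_1^+ = FI_3$, $A_g^- = FE_1$, $A_{g^2}^+ = FE_1^2$ forces the linear identities, the commutativity of $C_3$ yields the commutator identity, and $E_1^4 = E_1^3 = 0$ give the remaining monomial identities. For the converse inclusion, I would reduce modulo the candidate ideal an arbitrary multilinear polynomial $f \in P_n^{\sharp}\cap \textnormal{Id}^{\sharp}(A)$ to a linear combination of monomials of the schematic form $(x_{1,1}^+)^n$, $(x_{1,1}^+)^{n-1}x_{l,g}^{\epsilon}$, $(x_{1,1}^+)^{n-2}x_{i,g}^{\epsilon}x_{j,g}^{\epsilon}$ and $(x_{1,1}^+)^{n-1}x_{l,g^2}^{+}$, with $\epsilon \in \{+,-\}$ suited to the algebra; a multihomogeneous argument combined with the evaluation $x_{1,1}^+ \mapsto I_3$, $x_{\cdot,g}^{\epsilon}\mapsto E_1$, $x_{\cdot,g^2}^+ \mapsto E_1^2$ forces every scalar coefficient to vanish. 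This simultaneously identifies the $T_G^*$-ideal and exhibits an explicit basis of $P_n^{\sharp}$ modulo $\textnormal{Id}^{\sharp}(A)$, giving $c_n^{\sharp}(A) = 1 + 2n + \binom{n}{2}$.

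Stage~2 (items (3), (4) and the colength). Applying (\ref{293}) and (\ref{77}), the four multipartitions in the claimed cocharacter list consist exclusively of single-row partitions, so $d_{\lambda_1}\cdots d_{\lambda_{2t}} = 1$ in each case, and the multinomial coefficients $1, n, \binom{n}{2}, n$ sum to $1 + 2n + \binom{n}{2}$, matching the codimension just obtained. It then suffices to exhibit one nonzero highest weight vector per multipartition (Proposition \ref{calmult}): the candidates $(x_{1,1}^+)^n$, $(x_{1,1}^+)^{n-1}x_{1,g}^{\epsilon}$, $(x_{1,1}^+)^{n-2}(x_{1,g}^{\epsilon})^2$ and $(x_{1,1}^+)^{n-1}x_{1,g^2}^{+}$ all remain nonzero under the evaluation above, producing $I_3, E_1, E_1^2, E_1^2 \neq 0$. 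Each multiplicity is therefore at least $1$ and, forced by the dimension identity of Stage~1, each equals exactly $1$, so $l_n^{\sharp}(C_{3}^{\mathsf{g}}) = l_n^{\sharp}(C_{3,*}^{\mathsf{g}}) = 4$, closing item (5).

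The main obstacle lies in the reduction step of Stage~1: one has to check that the stated generators, together with standard consequences such as multilinearizations and the commutativity identity, really do suffice to rewrite every multilinear polynomial in the short normal form above. In the $C_{3,*}^{\mathsf{g}}$ case one has to be especially careful with mixed monomials containing a skew variable of degree $g$ alongside a symmetric variable of degree $g^2$, or with monomials containing several variables of degree $g^2$, to ensure that the identities $x_{1,g}^{-}x_{2,g^2}^{+}$ and $x_{1,g^2}^{+}x_{2,g^2}^{+}$ are enough to collapse them without leaving surviving monomials outside the claimed basis.
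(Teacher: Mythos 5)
Your proposal is correct and follows essentially the same route as the paper: establish the $T_G^*$-ideal by reducing to the normal form $(x_{1,1}^+)^n$, $(x_{1,1}^+)^{n-1}x_{\cdot,g}^{\epsilon}$, $(x_{1,1}^+)^{n-2}x_{\cdot,g}^{\epsilon}x_{\cdot,g}^{\epsilon}$, $(x_{1,1}^+)^{n-1}x_{\cdot,g^2}^{+}$ and killing coefficients via the evaluation $x_{1,1}^+\mapsto I_3$, $x_{\cdot,g}^{\epsilon}\mapsto E_1$, $x_{\cdot,g^2}^+\mapsto E_1^2$, then match the resulting codimension $1+2n+\binom{n}{2}$ against the four single-row multipartitions using Proposition \ref{calmult} and the same highest weight vectors. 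The reduction step you flag as the main obstacle is treated in the paper at exactly the level of detail you describe, so no gap remains.
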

	
	\begin{proof}
In order to prove item {1)}, consider 
\[
I=\langle x_{1,h}^-,\, x_{1,g}^+x_{2,g}^+x_{3,g}^+,\, x_{1,g}^+x_{2,g^2}^+,\, x_{1,g^2}^+x_{2,g^2}^+,\, x_{1,r} \rangle_{T_G^*}.
\] 
It is immediate that $I \subseteq \textnormal{Id}^{\sharp}(C_{3}^\mathsf{g}).$ To prove the opposite inclusion, let $f \in P_n^\sharp \cap \textnormal{Id}^{\sharp}(C_{3}^\mathsf{g}).$ After reducing $f$ modulo $I$, we may rewrite $f$ as a linear combination of the polynomials
\begin{equation} \label{algc3g}
    {x_{1,1}^+}\cdots x_{n,1}^+, \quad 
    {x_{1,1}^+}\cdots \widehat{x_{i,1}^+}\cdots x_{n,1}^+x_{i,g}^+, \quad 
    {x_{1,1}^+}\cdots \widehat{x_{i,1}^+}\cdots x_{n,1}^+x_{i,g^2}^+, \quad 
    {x_{1,1}^+}\cdots \widehat{x_{i,1}^+}\cdots {\widehat{x_{j,1}^+}}\cdots x_{n,1}^+x_{i,g}^+x_{j,g}^+,
\end{equation}
where $1\leq i<j \leq n$. By the multihomogeneity of $T_G^*$-ideals, without loss of generality we may assume that $f$ is one of the following polynomials:  
\[
 \alpha {x_{1,1}^+}\cdots x_{n,1}^+, \quad 
\beta {x_{1,1}^+}\cdots x_{n-1,1}^+x_{i,g}^+, \quad 
\gamma {x_{1,1}^+}\cdots x_{n-1,1}^+x_{j,g^2}^+, \quad \mbox{ or }\quad 
\delta {x_{1,1}^+}\cdots x_{n-2,1}^+x_{p,g}^+x_{q,g}^+.
\]  

Consider the evaluation
\[
x_{l,1}^+\mapsto I_3, \mbox{ for all } l, 
\qquad x_{i,g}^+\mapsto E_1, 
\qquad x_{j,g^2}^+\mapsto E_1^2, 
\qquad x_{p,g}^+,  x_{q,g}^+\mapsto E_1,
\] we obtain $\alpha=\beta=\gamma=\delta =0$. Hence, $I= \textnormal{Id}^{\sharp}(C_{3}^\mathsf{g}).$ 

The argument above shows that the polynomials in (\ref{algc3g}) form a basis of $P_n^{\sharp}$ modulo $P_n^{\sharp}\cap \textnormal{Id}^{\sharp}(C_{3}^\mathsf{g})$. Therefore,
\[
c_n^{\sharp}(C_{3}^\mathsf{g})=1+2n+\binom{n}{2}.
\]  

For item {3)}, by (\ref{293}) and (\ref{77}) we obtain
\[
c_n^{\sharp}(C_{3}^\mathsf{g})= d_{(n)} 
+ \binom{n}{n-1}d_{(n-1)}d_{(1)} 
+ \binom{n}{n-2}d_{(n-2)}d_{(2)} 
+ \binom{n}{n-1}d_{(n-1)}d_{(1)}.
\]  
It remains to show that  $m_{
\langle \lambda_i \rangle}\geq 1$, for all $i=1,\ldots ,4$. Consider the highest weight vectors
\[
f_{T_{\gen{\lambda_1}}}=(x_{1,1}^+)^n, \quad 
f_{T_{\gen{\lambda_2}}}=(x_{1,1}^+)^{n-1}x_{1,g}^+, \quad 
f_{T_{\gen{\lambda_3}}}=(x_{1,1}^+)^{n-2}(x_{1,g}^+)^2, \quad 
f_{T_{\gen{\lambda_4}}}=(x_{1,1}^+)^{n-2}x_{1,g^2}^+,
\]  
associated to the multipartitions
\[
\gen{\lambda_1}=((n)_{1^+}), \quad 
\gen{\lambda_2}=((n-1)_{1^+}, (1)_{g^+}), \quad 
\gen{\lambda_3}=((n-2)_{1^+}, (2)_{g^+}), \quad 
\gen{\lambda_4}=((n-1)_{1^+}, (1)_{g^2}^+).
\]  

Now, under the evaluation $x_{1,1}^+ \mapsto I_3, \,
x_{1,g}^+ \mapsto E_1, \, 
x_{1,g^2}^+ \mapsto E_1^2,$ we obtain $f_{T_{\gen{\lambda_1}}}, f_{T_{\gen{\lambda_2}}}, f_{T_{\gen{\lambda_3}}}, f_{T_{\gen{\lambda_4}}}\notin \textnormal{Id}^{\sharp}(C_{3}^\mathsf{g})$. By Proposition \ref{calmult}, we have $m_{
\langle \lambda_i \rangle}\geq 1$, for all $i=1,\ldots ,4$. Therefore, item {3)} follows.  

The remaining items can be proved in a similar way. 
	\end{proof}
	
	For $k\geq 2$, consider $E = \sum\limits_{i = 2}^{k-1} e_{i,i+1} + e_{2k-i,2k-i+1} \in UT_{2k}$ and define the following subalgebras of $UT_{2k}$
	\begin{align*}
		A_k &= \mbox{span}_F\{e_{11}+e_{2k,2k},E, \ldots , E^{k-2}; e_{12}, e_{13}, \ldots , e_{1k}, e_{k+1,2k}, e_{k+2,2k}, \ldots , e_{2k-1,2k}\},
		\\N_k &= \mbox{span}_F \{I_{2k}, E, \ldots , E^{k-2}; e_{12} - e_{2k-1,2k}, e_{13}, \ldots , e_{1k}, e_{k+1,2k}, e_{k+2,2k}, \ldots , e_{2k-2,2k}\},
		\\ U_k &= \mbox{span}_F \{ I_{2k}, E, \ldots , E^{k-2}; e_{12} + e_{2k-1,2k}, e_{13}, \ldots , e_{1k}, e_{k+1,2k}, e_{k+2,2k}, \ldots , e_{2k-2,2k}\}.
	\end{align*}
	
	\begin{definition}
		For $k\geq 2$, denote by $N_{k}^*, U_{k}^{*}$ and $A_{k}^*$ the algebras $N_k, U_k$ and $A_k$, respectively, with trivial $G$-grading and reflection involution. Furthermore, we define $N_k^\mathsf{g}, U_k^\mathsf{g}$ and $A_k^\mathsf{g}$ as the algebras $N_k, U_k$ and $A_k$, respectively, endowed with reflection involution and $G$-grading induced by $\mathsf{g}=(1,  g^{k-1}, 1^{k-1}, g)\in G^{2k}$, where $g\in G- \{1\}.$
	\end{definition}

We emphasize that the algebras above play an important role in the classification of subvarieties of $\textnormal{var}^{\sharp}(M_{g,\rho})$, as we can see below.
	
	\begin{theorem}\cite[Theorem $5.15$]{Mara} \label{31} 
		Let $g\in G$. If $A \in \textnormal{var}^{\sharp}(M_{g, \rho})$ then $A$ is $T_{G}^*$-equivalent to one of the following $(G,*)$-algebras: $M_{g, \rho}$, $B\oplus N$, $C\oplus N$ or $N$, where $B$ is a finite direct sum of distinct $(G,*)$-algebras in the set $\{N_{p}^{{g}}, U_{q}^{{g}}, A_{r}^{{g}}  \}$, $N$ is a nilpotent $(G,*)$-algebra and $C$ is a commutative $(G,*)$-algebra with trivial $G$-grading and trivial involution. 
	\end{theorem}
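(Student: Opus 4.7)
The plan is first to dichotomize based on whether $M_{g,\rho}$ itself belongs to $\textnormal{var}^{\sharp}(A)$. If so, then $\textnormal{var}^{\sharp}(A) = \textnormal{var}^{\sharp}(M_{g,\rho})$ and hence $A \sim_{T_G^*} M_{g,\rho}$, which is the first case of the conclusion. Otherwise $M_{g,\rho} \notin \textnormal{var}^{\sharp}(A)$, so $A$ satisfies some extra $(G,*)$-identity that fails on $M_{g,\rho}$; combining this with a Kemer-type reduction for $(G,*)$-algebras, I would replace $A$ (up to $T_G^*$-equivalence) by a finite-dimensional generator of its $(G,*)$-variety, so it suffices to classify such finite-dimensional generators.

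Next, I would apply the graded-involution version of the Wedderburn--Malcev decomposition to such a finite-dimensional $A$, writing $A = A_{ss} + J(A)$ with $A_{ss}$ a semisimple $(G,*)$-subalgebra. Since $M_{g,\rho}/J(M_{g,\rho})$ is two-dimensional, commutative and concentrated in degree $1$, with reflection involution swapping the two primitive idempotents, I would argue that $A_{ss}$ must also be commutative with trivial $G$-grading and that its involution either fixes or swaps primitive idempotents; in particular $A_{ss}$ has at most two simple components.

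Then I would perform the two-sided Peirce decomposition of $J(A)$ relative to the lifted idempotents of $A_{ss}$ and analyze each component as a $(G,*)$-bimodule. Using the fact that $M_{g,\rho} \notin \textnormal{var}^{\sharp}(A)$, I would rule out the simultaneous presence of certain incompatible bimodule configurations, which is the step that forces $B$ to be a \emph{direct sum} rather than a larger glued algebra. What remains reduces to chains of Peirce components of varying length and symmetry type, matching the model algebras $N_k^{\mathsf{g}}$, $U_k^{\mathsf{g}}$, and $A_k^{\mathsf{g}}$ inside $UT_{2k}$; when $A_{ss}$ is a single simple summand with trivial involution, the non-nilpotent part is a commutative $C$ with trivial grading and involution; and every Peirce component on which $A_{ss}$ acts trivially is absorbed into the nilpotent summand $N$.

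The main obstacle will be proving the distinctness built into the statement: that $B$ is composed of \emph{distinct} algebras from $\{N_p^{\mathsf{g}}, U_q^{\mathsf{g}}, A_r^{\mathsf{g}}\}$ at distinct indices and that no hidden $T_G^*$-equivalence collapses two entries of the list. This demands producing explicit separating $(G,*)$-identities or highest-weight-vector computations in the style of the lemmas of the previous section, in order to verify that different sizes $k$ and different types ($N$, $U$, $A$) give genuinely inequivalent $T_G^*$-ideals, thereby certifying that every $A \in \textnormal{var}^{\sharp}(M_{g,\rho})$ has a unique normal form among the listed options.
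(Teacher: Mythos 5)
First, note that the paper itself offers no proof of this statement: Theorem \ref{31} is imported verbatim from \cite[Theorem 5.15]{Mara}, so there is no internal argument to compare yours against. Judged on its own merits, your skeleton (dichotomy on whether $M_{g,\rho}$ generates the whole variety, reduction to a finite-dimensional generator, Wedderburn--Malcev plus a Peirce analysis of the radical) is the standard strategy and is consistent with how the present paper argues in Sections 5 and 6. The opening step is fine: if $M_{g,\rho}\in\textnormal{var}^{\sharp}(A)$ then the two varieties coincide and $A\sim_{T_G^*}M_{g,\rho}$.

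There are, however, two genuine problems. The claim that $A_{ss}$ ``has at most two simple components'' is false and in fact contradicts the very conclusion you are proving: $B$ is allowed to be a direct sum of arbitrarily many distinct algebras $N_p^{\mathsf{g}}$, $U_q^{\mathsf{g}}$, $A_r^{\mathsf{g}}$, each of which contributes a simple summand to the semisimple part of $A$. The correct replacement for this step is Theorem \ref{1}: once $M_{g,\rho}\notin\textnormal{var}^{\sharp}(A)$ (together with the fact, established in \cite{Mara,Lorena}, that the other critical algebras $(FC_2)_{*}$, $(FC_2)^{\sharp}$, $FC_p$, $M_{h,\rho}$ already fail to lie in $\textnormal{var}^{\sharp}(M_{g,\rho})$), the variety has polynomial growth, so $A\sim_{T_G^*}B_1\oplus\cdots\oplus B_m$ with $\dim_F B_i/J(B_i)\leq 1$; the decomposition $J=J_{00}+J_{10}+J_{01}+J_{11}$ of (\ref{6}) is then applied block by block, and the model algebras $N_k^{\mathsf{g}}$, $U_k^{\mathsf{g}}$, $A_k^{\mathsf{g}}$ arise from the off-diagonal pieces $J_{10}$, $J_{01}$. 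Second, you have located the difficulty in the wrong place: the theorem makes no uniqueness claim, and the ``distinctness'' of the summands of $B$ is immediate, since $\textnormal{Id}^{\sharp}(D\oplus D)=\textnormal{Id}^{\sharp}(D)$ lets one simply discard duplicates. The real work, which your sketch only gestures at, is showing that every non-nilpotent block whose radical has a nonzero off-diagonal part is $T_G^*$-equivalent to one of the listed models, and that taking direct sums of such blocks produces nothing outside the list.
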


According to \cite{MN}, we have the following lemma. 
	
	\begin{lemma} \label{51} For $k\geq 2$ and $l>2$, we have $l_{n}^{\sharp}(A_{k}^*)=3k^2-5k+3,$ $l_{n}^{\sharp}(U_{l}^*)=\frac{3l^2-9l+8}{2}$ and $l_{n}^{\sharp}(N_{l}^*)=\frac{3l^2-11l+14}{2}.$ In particular, notice that $l_n^{\sharp}(U_3^*)=l_n^{\sharp}(N_3^*)=4$ and $l_{n}^{\sharp}(A_{2}^*)=5.$ 
	\end{lemma}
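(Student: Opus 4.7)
The plan is to reduce the problem to a purely $\ast$-theoretic one and then appeal to the cocharacter computations of \cite{MN}. Since each of $A_k^*, U_l^*, N_l^*$ carries the trivial $G$-grading, every homogeneous variable that does not vanish identically on the algebra must lie in the degree $1\in G$ slot. Hence for any $\gen{n}=(n_1,\dots,n_{2t})$, the quotient $P_{\gen{n}}(A)$ is zero unless $\gen{n}=(n_1,n_2,0,\dots,0)$ with $n_1+n_2=n$. Consequently, the full $(G,\ast)$-cocharacter of each of these algebras is supported on multipartitions of the form $\gen{\lambda}=((\lambda^+)_{1^+},(\lambda^-)_{1^-})$, and the $(G,\ast)$-colength coincides numerically with the ordinary $\ast$-colength of the algebra viewed only as a $\ast$-algebra.

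With this reduction, I would apply the explicit $\ast$-cocharacter decompositions established in \cite{MN} for $A_k$, $U_l$ and $N_l$ under the reflection involution. For each algebra, one first exhibits a finite set of $\ast$-identities generating the $T_\ast$-ideal: the Jacobson radicals of these subalgebras of $UT_{2k}$ are nilpotent of class depending on $k$ or $l$, so the relevant identities come from this nilpotency together with the symmetry constraints imposed by the reflection $\rho$. These identities restrict the multipartitions $(\lambda^+,\lambda^-)$ with $m_{(\lambda^+,\lambda^-)}\neq 0$ to an explicit finite list of hook-like shapes (with bounded number of rows in the skew slot and essentially one long row plus small hooks in the symmetric slot). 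For each surviving shape, following Proposition~\ref{calmult}, I would construct a highest weight vector $f_{T_{\gen{\lambda}}}$ and use evaluations of the form $x_{i,1}^+\mapsto I_{2k}$ or a power $E^j$, together with carefully chosen off-diagonal matrix units for the skew variables, to certify $f_{T_{\gen{\lambda}}}\notin \idgri{A}$. Summing over all admissible shapes then yields the quadratic formulas
\[
l_n^\sharp(A_k^*)=3k^2-5k+3,\qquad l_n^\sharp(U_l^*)=\tfrac{3l^2-9l+8}{2},\qquad l_n^\sharp(N_l^*)=\tfrac{3l^2-11l+14}{2}.
\]

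The main obstacle is the multiplicity count itself: deciding, for each admissible pair $(\lambda^+,\lambda^-)$ of bounded depth, exactly how many linearly independent highest weight vectors survive modulo the $T_\ast$-ideal. This is where the three algebras $A_k$, $U_k$, $N_k$ diverge, since the presence or absence of the identity $I_{2k}$ and of the ``coupling'' generator $e_{12}\pm e_{2k-1,2k}$ alters which highest weight vectors remain non-identities and in what number. Combining the nilpotency bounds with the symmetry pattern of $\rho$ is what produces the precise quadratic polynomials rather than merely an upper bound. The special values $l_n^\sharp(U_3^*)=l_n^\sharp(N_3^*)=4$ and $l_n^\sharp(A_2^*)=5$ then follow by direct substitution.
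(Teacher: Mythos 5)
Your proposal is essentially the paper's own route: the paper gives no proof of this lemma beyond the citation to \cite{MN}, and your key (correct) observation that the trivial $G$-grading collapses the $(G,*)$-cocharacter onto multipartitions of the form $((\lambda^+)_{1^+},(\lambda^-)_{1^-})$, so that $l_n^{\sharp}$ equals the ordinary $\ast$-colength computed in \cite{MN}, is exactly the implicit reduction the authors rely on. The remainder of your sketch correctly describes where the real work in \cite{MN} lies (the multiplicity counts for each admissible shape), but since both you and the paper outsource that computation to the reference, the approaches coincide.
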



 By \cite[Theorems 5.12-5.14]{Mara}, we have the remark below.
	
			

   

	\begin{remark}
		For $k=2$ and $g\in G- \{1\}$ we may check that $U_{2}^*\sim_{T_G^*}C$, $N_{2}^\mathsf{g}\sim_{T_G^*} C_{2,*}^\mathsf{g}$ and $U_{2}^\mathsf{g}\sim_{T_G^*} C_2^\mathsf{g}$, where $C$ denotes a commutative algebra with trivial $G$-grading and trivial involution. Moreover, by \cite[Lemma 2]{MF} we have $N_{2}^*\sim_{T_G^*} C_{2,*}.$ Also, observe that $N_i^\mathsf{g} \in \textnormal{var}^{\sharp}(N_{i+1}^\mathsf{g})$, $U_i^\mathsf{g} \in \textnormal{var}^{\sharp}(U_{i+1}^\mathsf{g})$ and $A_i^\mathsf{g} \in \textnormal{var}^{\sharp}(A_{i+1}^\mathsf{g})$, for all $i\geq 2$ and $g\in G-\{1\}.$
	\end{remark}

	\begin{lemma} \label{52} For $g\in G-\{1\}$ we have,

    \begin{enumerate}

    \item[1)] $\textnormal{Id}^{\sharp}(N_3^\mathsf{g})=\langle x_{1,1}^-, x_{1,g}x_{2,g}, [x_{1,g}^+,{x_{1,1}^+}], x_{1,r} \rangle_{T_G^*}$ and $c_n^{\sharp}(N_{3}^{\mathsf{g}})=1+ 2n+ 2\displaystyle\binom{n}{2}$.

 \item[2)] $\mathcal{X}_n(N_{3}^{\mathsf{g}})=(\chi_{((n)_{1^+})}; \chi_{((n-1)_{1^+}, (1)_{g^+})};2\chi_{((n-1)_{1^+}, (1)_{g^-})}+\chi_{((n-2,1)_{1^+} , (1)_{g^-})}).$

        \item[3)] $\mathcal{X}_n(A_{2}^{\mathsf{g}})=(\chi_{((n)_{1^+})};2 \chi_{((n-1)_{1^+}, (1)_{g^+})};2\chi_{((n-1)_{1^+},  (1)_{g^-})}).$

        \item[4)] $l_{n}^{\sharp}(A_{2}^{\mathsf{g}})=l_{n}^{\sharp}(N_{3}^{\mathsf{g}})=5.$
    \end{enumerate}

	\end{lemma}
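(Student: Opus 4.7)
My plan follows the template used for Lemmas~5.4 and~5.6. For item~(1), I first check by direct computation on the basis $\{I_6, E, e_{12}-e_{56}, e_{13}, e_{46}\}$ of $N_3^\mathsf{g}$ that each listed generator belongs to $\textnormal{Id}^\sharp(N_3^\mathsf{g})$; the essential products I record are $E^2=0$, $E w^- = -e_{46}$, $w^- E = e_{13}$, and $E v^{\pm} = v^{\pm} E = 0$, writing $w^- = e_{12}-e_{56}$ and $v^{\pm} = e_{13}\pm e_{46}$. Calling $I$ the $T_G^*$-ideal generated by the listed polynomials, I use two reductions: first, the consequence $[[x_{1,1}^+, x_{1,g}^-], x_{2,1}^+] \equiv 0$ obtained by substituting $x_{1,g}^+ \mapsto [x_{1,1}^+, x_{1,g}^-]$ in $[x_{1,g}^+, x_{1,1}^+]$; second, the obvious identity $[x_{1,1}^+, x_{2,1}^+] \equiv 0$ of $N_3^\mathsf{g}$ (which holds since $I_6$ and $E$ commute). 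These together reduce any $f \in P_n^\sharp$ modulo $\textnormal{Id}^\sharp(N_3^\mathsf{g})$ to a linear combination of $1+n+n^2$ canonical monomials: $x_{1,1}^+\cdots x_{n,1}^+$; the $n$ monomials $x_{1,1}^+\cdots \widehat{x_{i,1}^+}\cdots x_{n,1}^+ x_{i,g}^+$; and the $n^2$ monomials indexed by (choice of skew variable $x_{i,g}^-$, position $k\in\{0,\ldots,n-1\}$ of the skew variable in the word). A generic evaluation $x_{i,1}^+ \mapsto I_6 + \alpha_i E$, $x_{i,g}^+ \mapsto v^+$, $x_{i,g}^- \mapsto w^-$ then separates these monomials, yielding $c_n^\sharp(N_3^\mathsf{g}) = 1 + 2n + 2\binom{n}{2}$ and $I = \textnormal{Id}^\sharp(N_3^\mathsf{g})$.

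For item~(2), Proposition~\ref{calmult} reduces the cocharacter to finding enough linearly independent highest weight vectors for each claimed multipartition. The first two characters $\chi_{((n)_{1^+})}$ and $\chi_{((n-1)_{1^+},(1)_{g^+})}$ are settled by $(x_{1,1}^+)^n$ and $(x_{1,1}^+)^{n-1}x_{1,g}^+$, non-identities under $x_{i,1}^+\mapsto I_6$, $x_{1,g}^+\mapsto v^+$. For the multiplicity~$2$ of $\chi_{((n-1)_{1^+},(1)_{g^-})}$, I use the pair $(x_{1,1}^+)^{n-1}x_{1,g}^-$ and $x_{1,g}^-(x_{1,1}^+)^{n-1}$, which under $x_{1,1}^+\mapsto I_6+\alpha E$, $x_{1,g}^-\mapsto w^-$ produce linear combinations involving $e_{46}$ and $e_{13}$ respectively, hence are linearly independent. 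The subtle one is $\chi_{((n-2,1)_{1^+},(1)_{g^-})}$: since $[x_{1,1}^+, x_{2,1}^+]\equiv 0$, the standard column-antisymmetrized highest weight vector vanishes, so I use the interleaved vector $(x_{1,1}^+)^{n-3}\bigl(x_{1,1}^+ x_{1,g}^- x_{2,1}^+ - x_{2,1}^+ x_{1,g}^- x_{1,1}^+\bigr)$, which at $x_{1,1}^+\mapsto I_6$, $x_{2,1}^+\mapsto E$, $x_{1,g}^-\mapsto w^-$ evaluates to $w^-E - Ew^- = e_{13}+e_{46} = v^+ \neq 0$. The five highest weight vectors thus produced account for all of $c_n^\sharp(N_3^\mathsf{g})$ via the decomposition \eqref{77}, so no further irreducibles can appear.

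Item~(3) is analogous for $A_2^\mathsf{g}$, whose sym deg-$1$, sym deg-$g$ and skew deg-$g$ components are one-dimensional (spanned by $e_{11}+e_{44}$, $e_{12}+e_{34}$ and $e_{12}-e_{34}$ respectively), so no non-rectangular partition arises. The multiplicity~$2$ of each $\chi_{((n-1)_{1^+},(1)_{g^\pm})}$ is witnessed by the pair $(x_{1,1}^+)^{n-1}x_{1,g}^{\pm}$ and $x_{1,g}^{\pm}(x_{1,1}^+)^{n-1}$: under $x_{1,1}^+\mapsto e_{11}+e_{44}$, $x_{1,g}^{\pm}\mapsto e_{12}\pm e_{34}$ they evaluate to $e_{12}$ and $\pm e_{34}$ respectively, which are linearly independent. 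Item~(4) is then a bookkeeping step: summing the multiplicities displayed in each cocharacter gives $l_n^\sharp(N_3^\mathsf{g}) = 1+1+(2+1) = 5$ and $l_n^\sharp(A_2^\mathsf{g}) = 1+2+2 = 5$.

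The main obstacle is the character $\chi_{((n-2,1)_{1^+},(1)_{g^-})}$ in $\mathcal{X}_n(N_3^\mathsf{g})$, the first non-rectangular partition of the paper: because the sym deg-$1$ variables commute on $N_3^\mathsf{g}$, the naive column-antisymmetrized highest weight vector is already an identity, and one must recognize that the required alternation has to involve the skew deg-$g$ variable placed \emph{inside} the bracket. Once the correct interleaved vector is identified, its non-vanishing is a straightforward triple-product computation using $w^-E = e_{13}$ and $Ew^- = -e_{46}$.
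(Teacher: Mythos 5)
Your argument is correct and follows essentially the same route as the paper: item (2) is handled exactly as there (highest weight vectors matched against the codimension via (\ref{293}) and (\ref{77}), the multiplicity $2$ witnessed by $(x_{1,1}^+)^{n-1}x_{1,g}^-$ and $x_{1,g}^-(x_{1,1}^+)^{n-1}$ separated by $x_{1,1}^+\mapsto I_6+E$, and an interleaved alternating vector for $((n-2,1)_{1^+},(1)_{g^-})$ that differs from the paper's $x_{1,1}^+x_{1,g}^-(x_{1,1}^+)^{n-3}x_{2,1}^+ - x_{2,1}^+x_{1,g}^-(x_{1,1}^+)^{n-2}$ only in where the alternated pair sits, both evaluating to $\pm(e_{13}+e_{46})$). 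The only divergence is item (1), which the paper simply cites from \cite[Theorem 5.12]{Mara} while you reprove it directly; your reduction and count are valid, but to conclude $I=\textnormal{Id}^{\sharp}(N_3^{\mathsf{g}})$ (rather than just an upper bound on $c_n^{\sharp}$) you should invoke $[x_{1,1}^+,x_{2,1}^+]$ as a consequence of $x_{1,1}^-\in I$ (substitute $x_{1,1}\mapsto x_{1,1}^+x_{2,1}^+$), not merely as an identity of the algebra --- which it is, so the gap is purely expository.
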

	
	\begin{proof}
The first item follows directly from Theorem~5.12 in \cite{Mara}. To prove item $2)$, by (\ref{293}) and (\ref{77}) we have
\[
c_n^{\sharp}(N_{3}^{\mathsf{g}})
= d_{(n)} 
+ \binom{n}{n-1} d_{(n-1)}d_{(1)} 
+ 2\binom{n}{n-1}d_{(n-1)}d_{(1)} 
+ \binom{n}{n-1}d_{(n-2,1)}d_{(1)}.
\]
Hence, it suffices to show that
\[
m_{\gen{\lambda_1}} \geq 1, 
\quad m_{\gen{\lambda_2}} \geq 1, 
\quad m_{\gen{\lambda_3}} \geq 2 
\quad \text{ and }\quad  m_{\gen{\lambda_4}} \geq 1,\, \mbox{ where}
\] \[
\gen{\lambda_1} = ((n)_{1^+}), \quad
\gen{\lambda_2} = ((n-1)_{1^+}, (1)_{g^+}), \quad
\gen{\lambda_3} = ((n-1)_{1^+}, (1)_{g^-}), \quad
\gen{\lambda_4} = ((n-2,1)_{1^+}, (1)_{g^-}).
\] For $\gen{\lambda_1}$ and $\gen{\lambda_2}$, consider the respective highest weight vectors
\[
f_1 = {(x_{1,1}^+)}^n 
\qquad \mbox{ and } \qquad
f_2 = {(x_{1,1}^+)}^{n-1}x_{1,g}^+.
\] By evaluating $x_{1,1}^+ \mapsto I_6$ and $x_{1,g}^+ \mapsto e_{13}+e_{46}$ we show that 
$f_1$ and $f_2$ are not identities of $N_3^\mathsf{g}$. 
Thus, by Proposition \ref{calmult} we have $m_{\gen{\lambda_1}}, m_{\gen{\lambda_2}} \geq 1$. For $\gen{\lambda_4}$, consider the multitableau
\[
\left(
\begin{array}{ll}
\begin{array}{|c|c|c|c|}
\hline
1 & 3 & \cdots & n-1 \\
\hline
\end{array}\,\,_{1^+} \\
\begin{array}{|c|}
\hline
n \\
\hline
\end{array}
\end{array},\,
\begin{array}{|c|}
\hline
2 \\
\hline
\end{array}\,\,_{g^-}
\right)
\]
whose highest weight vector is
\[
f_{T_{\gen{\lambda_4}}} 
= x_{1,1}^+x_{1,g}^- {(x_{1,1}^+)}^{n-3}x_{2,1}^+
- x_{2,1}^+x_{1,g}^- {(x_{1,1}^+)}^{n-2}.
\]
Evaluating $x_{1,1}^+ \mapsto I_6$, $x_{2,1}^+ \mapsto e_{23}+e_{45}$, and $x_{1,g}^- \mapsto e_{12}-e_{56}$ we obtain the element
$-e_{13}-e_{46} \neq 0$. 
Thus, $f_{T_{\gen{\lambda_4}}}\notin \textnormal{Id}(N_{3}^{\mathsf{g}})$ and then $m_{\gen{\lambda_4}} \geq 1$.

Finally, for the multipartition $\gen{\lambda_3}$ we consider the multitableaux
\[
\left( \, 
\begin{array}{|c|c|c|}
\hline
1 & \cdots & n-1 \\
\hline
\end{array}\,\,_{1^+},\,
\begin{array}{|c|}
\hline
n \\
\hline
\end{array}\,\,_{g^-}
\right)\mbox{ and }
\left(\, 
\begin{array}{|c|c|c|}
\hline
2 & \cdots & n-1 \\
\hline
\end{array}\,\,_{1^+},\,
\begin{array}{|c|}
\hline
1 \\
\hline
\end{array}\,\,_{g^-}
\right).
\]
Their corresponding highest weight vectors are
\[ f_4=
(x_{1,1}^+)^{n-1}x_{1,g}^- 
\quad \text{and} \quad f_5=
x_{1,g}^-(x_{1,1}^+)^{n-1}.
\]
Clearly these vectors are not identities of $N_3^\mathsf{g}$. 
To prove linear independence modulo $\textnormal{Id}^{\sharp}(N_3^\mathsf{g})$, consider
\[
\alpha (x_{1,1}^+)^{n-1}x_{1,g}^- 
+ \beta x_{1,g}^-(x_{1,1}^+)^{n-1} \equiv 0 
\quad \text{on } \textnormal{Id}^{\sharp}(N_3^\mathsf{g}).
\]
By evaluating $x_{1,1}^+ \mapsto I_6+e_{23}+e_{45}$ and $x_{1,g}^- \mapsto e_{12}-e_{56}$ gives $\alpha=\beta=0$. 
Hence $m_{\gen{\lambda_3}} \geq 2$.

The proof is now complete.
\end{proof}
	
	

	We consider the following subalgebra of $UT_4$
    $$W= F(e_{11}+\cdots + e_{44})+F(e_{12}+e_{34})+F(e_{13}+e_{24})+Fe_{14}.$$
    
	For this algebra, we consider the trivial involution $\nu_1$ and the involutions $\nu_2$ and $\nu_3$ defined below
	
	$$ \begin{pmatrix}
		a& b & c & d \\
		0&a & 0 & c\\
		0&0 & a & b \\
		0&0 & 0 & a 
	\end{pmatrix}^{\nu_2}= \begin{pmatrix}
		a& -b & -c & d \\
		0&a & 0 & -c\\
		0&0 & a & -b \\
		0&0 & 0 & a 
	\end{pmatrix} \mbox{ and } \begin{pmatrix}
		a& b & c & d \\
		0&a & 0 & c\\
		0&0 & a & b \\
		0&0 & 0 & a 
	\end{pmatrix}^{\nu_3}= \begin{pmatrix}
		a& -b & c & -d \\
		0&a & 0 & c\\
		0&0 & a & -b \\
		0&0 & 0 & a 
	\end{pmatrix}.$$

\begin{definition}
    For $g,h\in G$, we denote by $W_*^{g,h}$ as the algebra $W$ with involution $*\in \{\nu_1, \nu_2,\nu_3\}$ and $G$-grading determined by 
$$e_{11}+\cdots +e_{44}\in (W_*^{g,h})_1, \,e_{12}+e_{34}\in (W_*^{g,h})_g,\,e_{13}+e_{24}\in (W_*^{g,h})_h\mbox{ and } e_{14}\in (W_*^{g,h})_{gh}. $$  
\end{definition}

	\begin{lemma} Consider $g,h$ distinct elements in $G- \{1\}$ such that $gh\neq 1$. Then,
		\begin{enumerate}

			\item[1)] $\textnormal{Id}^{\sharp}(W_{\nu_1}^{g,h})=\langle x_{1,1}^-, x_{1,s}^-, x_{1,s}^+x_{2,s}^+,x_{1,g}^+x_{2,gh}^+, x_{1,h}^+x_{2,gh}^+, x_{1,r}\mid s\in \{g,h,gh\}\rangle_{T_G^*}. $
			
			\item[2)] $\textnormal{Id}^{\sharp}(W_{\nu_2}^{g,h})=\langle  x_{1,1}^-, x_{1,g}^+, x_{1,h}^+, x_{1,gh}^-, x_{1,g}^-x_{2,g}^-, x_{1,h}^-x_{2,h}^-, x_{1,gh}^+x_{2,gh}^+, x_{1,g}^-x_{2,gh}^+, x_{1,h}^-x_{2,gh}^+, x_{1,r} \rangle_{T_G^*}.$
	
			\item[3)] $\textnormal{Id}^{\sharp}(W_{\nu_3}^{g,h})=\langle x_{1,1}^-, x_{1,g}^+, x_{1,h}^-, x_{1,gh}^+, x_{1,g}^-x_{2,g}^-, x_{1,h}^+x_{2,h}^+, x_{1,gh}^+x_{2,gh}^+, x_{1,g}^-x_{2,gh}^-, x_{1,h}^+x_{2,gh}^-, x_{1,r}\rangle_{T_G^*}.$

			\item[4)] 
  $\mathcal{X}_n(W_{\nu_1}^{g,h})=(\chi_{((n)_{1^+})}; \chi_{((n-1)_{1^+}, (1)_{g^+})};\chi_{((n-1)_{1^+}, (1)_{h^+})};\chi_{((n-1)_{1^+}  ,  (1)_{gh^+} )};\chi_{((n-2)_{1^+}, (1)_{g^+}, (1)_{h^+})}).$
			
			\item[5)] $\mathcal{X}_n(W_{\nu_2}^{g,h})=(\chi_{((n)_{1^+})}; \chi_{((n-1)_{1^+}, (1)_{g^-})};\chi_{((n-1)_{1^+} , (1)_{h^-})};\chi_{((n-1)_{1^+},   (1)_{gh^+} )};\chi_{((n-2)_{1^+}, (1)_{g^-}, (1)_{h^-})}.$
			
			\item[6)] $\mathcal{X}_n(W_{\nu_3}^{g,h})=(\chi_{((n)_{1^+})}; \chi_{((n-1)_{1^+}, (1)_{g^-})};\chi_{((n-1)_{1^+}, (1)_{h^+})};\chi_{((n-1)_{1^+}, (1)_{gh^-} )};\chi_{((n-2)_{1^+},  (1)_{g^-}, (1)_{h^+})}).$
			
			\item[7)]    $c_n^{\sharp}(W_{\nu_i}^{g,h})=1+3n+
            {2}\displaystyle\binom{n}{2}$ and $l_{n}^{\sharp}(W_{\nu_i}^{g,h}) =5, \;i=1,2,3.$

		\end{enumerate}
	\end{lemma}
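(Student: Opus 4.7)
The plan is to follow the template of Lemma~\ref{18} and the cocharacter-computation part of the proof of $C_3^\mathsf{g}$: first prove items $(1)$--$(3)$ by producing a canonical basis of $P_n^\sharp$ modulo the listed $T_G^*$-ideal of identities, which simultaneously yields the codimension asserted in item $(7)$; then derive items $(4)$--$(6)$ by exhibiting one highest weight vector per multipartition and invoking Proposition~\ref{calmult} together with the decomposition formulas (\ref{293})--(\ref{77}); finally the colength in $(7)$ is read off as the number of nonzero multiplicities appearing in each cocharacter.

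For item $(1)$, I would write $I$ for the $T_G^*$-ideal on the right-hand side and check $I \subseteq \textnormal{Id}^\sharp(W_{\nu_1}^{g,h})$ by direct substitution, using that $W$ is commutative, $\nu_1$ is trivial (so the skew components vanish), and $(e_{12}+e_{34})^2 = (e_{13}+e_{24})^2 = (e_{12}+e_{34})e_{14}= (e_{13}+e_{24})e_{14}=0$. For the reverse inclusion, take $f \in P_n^\sharp \cap \textnormal{Id}^\sharp(W_{\nu_1}^{g,h})$ and reduce it modulo $I$ to a linear combination of the monomials
\[
{x_{1,1}^+}\cdots x_{n,1}^+,\ {x_{1,1}^+}\cdots \widehat{x_{i,1}^+}\cdots x_{n,1}^+ x_{i,s}^+\ (s\in\{g,h,gh\}),\ {x_{1,1}^+}\cdots \widehat{x_{i,1}^+}\widehat{x_{j,1}^+}\cdots x_{n,1}^+ x_{i,g}^+x_{j,h}^+,
\]
and then use the evaluation $x_{l,1}^+ \mapsto I_4$, $x_{\ast,g}^+ \mapsto e_{12}+e_{34}$, $x_{\ast,h}^+ \mapsto e_{13}+e_{24}$, $x_{\ast,gh}^+ \mapsto e_{14}$ (noting $(e_{12}+e_{34})(e_{13}+e_{24})=e_{14}$) to force every coefficient to vanish on each multihomogeneous component. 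Items $(2)$ and $(3)$ proceed identically once the correct symmetric/skew pattern is used: under $\nu_2$ the components of degrees $g$ and $h$ are skew while $gh$ is symmetric, and under $\nu_3$ only $g$ and $gh$ are skew while $h$ is symmetric; the analogous substitutions in $W$ (now sending the appropriately signed variables to the same matrix elements) again separate the canonical monomials.

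For items $(4)$--$(6)$, the codimension $1+3n+2\binom{n}{2}$ decomposes via (\ref{293})--(\ref{77}) as
\[
d_{(n)} + 3\binom{n}{n-1}d_{(n-1)}d_{(1)} + \binom{n}{n-2,1,1}d_{(n-2)}d_{(1)}d_{(1)},
\]
so by Proposition~\ref{calmult} it suffices, for each of the five multipartitions listed in the corresponding $\mathcal{X}_n$, to exhibit a single highest weight vector that is not a $(G,*)$-identity of $W_{\nu_i}^{g,h}$. In the $\nu_1$-case the choices $(x_{1,1}^+)^n$, $(x_{1,1}^+)^{n-1}x_{1,s}^+$ for $s \in \{g,h,gh\}$, and $(x_{1,1}^+)^{n-2}x_{1,g}^+x_{1,h}^+$ all survive the evaluation above (the last producing $e_{14}\neq 0$). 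Analogous choices, with signs adjusted according to the involutions $\nu_2, \nu_3$, settle items $(5)$ and $(6)$. The colength in $(7)$ then follows by counting these five multipartitions, while the codimension has already been obtained in the proof of items $(1)$--$(3)$.

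The main obstacle is the bookkeeping in the reduction step of items $(1)$--$(3)$: the product structure of $W$ kills every length-two monomial in the non-identity components except those of the form $x_{i,g}^\epsilon x_{j,h}^\epsilon$ (which produce the element $e_{14}$), so one must verify that the explicitly listed quadratic relations for each involution $\nu_i$ (such as $x_{1,g}^+x_{2,gh}^+,\ x_{1,h}^+x_{2,gh}^+$ for $\nu_1$, and their signed analogues for $\nu_2, \nu_3$) are indeed enough to perform every such collapse. Once this pinned-down generating set for $I$ is confirmed, the evaluations in $W$ separate the surviving canonical monomials and the remainder of the proof reduces to the same pattern already exploited in Lemma~\ref{18}.
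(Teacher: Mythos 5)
Your proposal is correct and follows essentially the same route as the paper's proof: the same generating set of canonical monomials, the same evaluation $x_{l,1}^+ \mapsto I_4$, $x_{\ast,g}^+ \mapsto e_{12}+e_{34}$, $x_{\ast,h}^+ \mapsto e_{13}+e_{24}$, $x_{\ast,gh}^+ \mapsto e_{14}$ to separate them, and the same five highest weight vectors combined with Proposition~\ref{calmult} and the codimension count to pin down the cocharacter and colength. No substantive differences.
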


\begin{proof}
   
Consider the $T_G^*$-ideal $I$ generated by the identities in $1)$. Clearly $I \subseteq \textnormal{Id}^\sharp(W_{\nu_1}^{g,h})$.  
Let $f \in P_n^\sharp \cap \textnormal{Id}^\sharp(W_{\nu_1}^{g,h})$. Since the commutators $[x_{1,1}^+,x_{2,s}^+]$ and $[x_{1,g}^+,x_{2,h}^+]$ follow from the identities $x_{1,s}^-$, $s\in\{1,g,h,gh\}$, it follows that $P_n^\sharp$ modulo $P_n^\sharp \cap \textnormal{Id}^\sharp(W_{\nu_1}^{g,h})$ is generated by the monomials
\[
   x_{1,1}^+\cdots x_{n,1}^+, \quad
   x_{1,1}^+\cdots \widehat{x_{i,1}^+}\cdots x_{n,1}^+x_{i,g}^+, \quad
   x_{1,1}^+\cdots \widehat{x_{i,1}^+}\cdots x_{n,1}^+x_{i,h}^+,
\]
\[
   x_{1,1}^+\cdots \widehat{x_{i,1}^+}\cdots x_{n,1}^+x_{i,gh}^+, \quad
   x_{1,1}^+\cdots \widehat{x_{i,1}^+}\cdots \widehat{x_{j,1}^+}\cdots x_{n,1}^+x_{i,g}^+x_{j,h}^+.
\]
Hence, modulo $\textnormal{Id}^\sharp(W_{\nu_1}^{g,h})$, $f$ is a linear combination of these monomials. By multihomogeneity, we may assume that $f$ is one of the following polynomials
$$ \alpha x_{1,1}^+\cdots x_{n,1}^+,\;
   \beta x_{1,1}^+\cdots \widehat{x_{i,1}^+}\cdots x_{n,1}^+x_{i,g}^+,\;
   \gamma x_{1,1}^+\cdots \widehat{x_{i,1}^+}\cdots x_{n,1}^+x_{j,h}^+,
$$ $$
   \delta x_{1,1}^+\cdots \widehat{x_{i,1}^+}\cdots x_{n,1}^+x_{l,gh}^+\;\mbox{ or }\;
   \sigma x_{1,1}^+\cdots \widehat{x_{i,1}^+}\cdots \widehat{x_{j,1}^+}\cdots x_{n,1}^+x_{i,g}^+x_{j,h}^+.$$ Considering the evaluation $x_{m,1}^+ \mapsto I_4$, for all $ m$, $x_{i,g}^+ \mapsto e_{12}+e_{34}$, $
x_{j,h}^+ \mapsto e_{13}+e_{24}$ and $x_{l,gh}^+ \mapsto e_{14},$ we obtain $\alpha=\beta=\gamma=\delta=\sigma=0$. Therefore, $I=\textnormal{Id}^\sharp(W_{\nu_1}^{g,h})$. Moreover, the previous argument shows that there is no nontrivial linear combination of the above monomials giving an identity of $W_{\nu_1}^{g,h}$, and so they must constitute a basis to $P_n^\sharp$ modulo $P_n^\sharp\cap \textnormal{Id}^\sharp(W_{\nu_1}^{g,h})$. Therefore, 
\[
   c_n^\sharp(W_{\nu_1}^{g,h}) = 1+3n+2\binom{n}{2}.
\]

To prove the decomposition of the $\langle n \rangle$-cocharacters, we consider the highest weight vectors
\[
   (x_{1,1}^+)^{n-1},\quad (x_{1,1}^+)^{n-1}x_{1,g}^+,\quad
   (x_{1,1}^+)^{n-1}x_{1,h}^+,\quad (x_{1,1}^+)^{n-1}x_{1,gh}^+,\quad
   (x_{1,1}^+)^{n-2}x_{1,g}^+x_{1,h}^+,
\]
associated to the multipartitions
\[
   ((n)_{1^+}),\quad ((n-1)_{1^+},(1)_{g^+}),\quad ((n-1)_{1^+},(1)_{h^+}),\quad
   ((n-1)_{1^+},(1)_{gh^+}),\quad ((n-2)_{1^+},(1)_{g^+},(1)_{h^+}),
\]
respectively. These vectors are not identities of $W_{\nu_1}^{g,h}$, hence the corresponding multiplicities are nonzero.  
Item $4)$ now follows, since the codimension coincides with the value obtained above.

The other items are proved similarly.
\end{proof}

The proofs of the next lemmas are similar to the previous one and will be omitted.
 
	\begin{lemma} Consider $g\in G- \{1\}$ such that $|g|> 2$. Then,
		\begin{enumerate}
\item[1)]$\textnormal{Id}^{\sharp}(W_{\nu_3}^{g,g})=\langle [{x_{1,1}^+},x_{2,s}], x_{1,1}^-, x_{1,g^2}^+, x_{1,g}^-x_{2,g^2}^-,x_{1,g^2}^-x_{2,g^2}^-, x_{1,r}\mid s\in \{1,g,g^2\}\rangle_{T_G^*}.$

			\item[2)] $\textnormal{Id}^{\sharp}(W_{\nu_1}^{g,g^{-1}})=\langle x_{1,1}^-,x_{1,g}^-,x_{1,g^{-1}}^-, x_{1,r} \rangle_{T_G^*}.$		
			\item[3)] $\textnormal{Id}^{\sharp}(W_{\nu_2}^{g,g^{-1}})=\langle  x_{1,1}^-, x_{1,g}^+, x_{1,g^{-1}}^+, x_{1,r}\rangle_{T_G^*}.$
			
			\item[4)] $\textnormal{Id}^{\sharp}(W_{\nu_3}^{g,g^{-1}})=\langle  x_{1,g}^+, x_{1,g^{-1}}^-, [{x_{1,1}^+},x_{2,1}^+],[{x_{1,1}^+},x_{2,1}^+], [x_{1,g}^-,x_{2,g^{-1}}^+], x_{1,r} \rangle_{T_G^*}. $

	\item[5)] $\mathcal{X}_n(W_{\nu_3}^{g,g})=(\chi_{((n)_{1^+})};\, \chi_{((n-1)_{1^+}, (1)_{g^+})};\,\chi_{((n-1)_{1^+}, (1)_{g^-})};\,\chi_{((n-1)_{1^+} ,  (1)_{{g^2}^-} )};\,\chi_{((n-2)_{1^+} , (1)_{g^+}, (1)_{g^-})})$.

   \item[6)] $\mathcal{X}_n(W_{\nu_1}^{g,g^{-1}})=(\chi_{((n)_{1^+})}; \chi_{((n-1)_{1^+},  (1)_{g^+})};\chi_{((n-1)_{1^+},  (1)_{(g^{-1})^+})};\chi_{((n-2)_{1^+},(1)_{g^+} , (1)_{(g^{-1})^+})}).$
			
			\item[7)] $\mathcal{X}_n(W_{\nu_2}^{g,g^{-1}})=(\chi_{((n)_{1^+})}; \chi_{((n-1)_{1^+}, (1)_{g^-})};\chi_{((n-1)_{1^+} , (1)_{(g^{-1})^-})};\chi_{((n-2)_{1^+}, (1)_{g^-} , (1)_{(g^{-1})^-})}.$
			
			\item[8)] $\mathcal{X}_n(W_{\nu_3}^{g,g^{-1}})=(\chi_{((n)_{1^+})};{\chi_{((n-1)_{1^+}, (1)_{1^-})}}; \chi_{((n-1)_{1^+}, (1)_{g^-})};\chi_{((n-1)_{1^+}, (1)_{(g^{-1})^+})};\chi_{((n-2)_{1^+},  (1)_{g^-}, (1)_{(g^{-1})^+})}).$
			
			\item[9)] $c_n^{\sharp}(W_{\nu_i}^{g,g^{-1}})=1+2n+\displaystyle {2}\binom{n}{2}$, $c_n^{\sharp}(W_{\nu_3}^{g,g^{-1}})=c_n^{\sharp}(W_{\nu_3}^{g,g})=1+3n+{2}\displaystyle\binom{n}{2}$, $l_{n}^{\sharp}(W_{\nu_i}^{g,g^{-1}})=4$\;and\;$l_{n}^{\sharp}(W_{\nu_3}^{g,g^{-1}})=l_{n}^{\sharp}(W_{\nu_3}^{g,g})=5$, $i=1,2$.
			
		\end{enumerate}
	\end{lemma}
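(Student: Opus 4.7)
The plan is to follow the same scheme used in the proofs of the previous lemmas, treating each of the four algebras $W_{\nu_3}^{g,g}$, $W_{\nu_1}^{g,g^{-1}}$, $W_{\nu_2}^{g,g^{-1}}$ and $W_{\nu_3}^{g,g^{-1}}$ in parallel. For each algebra $A$, I denote by $I$ the $T_G^*$-ideal generated by the listed polynomials and check the inclusion $I \subseteq \textnormal{Id}^\sharp(A)$ by direct evaluation. This uses the explicit $G$-grading on $W$ together with the action of $\nu_i$ on the generating matrix units $e_{12}+e_{34}$, $e_{13}+e_{24}$ and $e_{14}$; in particular, the cases $g=g^{-1}$ versus $g \ne g^{-1}$ split naturally according to whether $g$ and $g^{-1}$ are distinct homogeneous components and the sign pattern of $\nu_i$ on each of them.

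To establish the reverse inclusion, I take $f\in P_n^\sharp\cap \textnormal{Id}^\sharp(A)$ and reduce it modulo $I$. In each case the listed relations force the commutators $[x_{1,1}^+, x_{2,s}^\varepsilon]$ (and similar cross-commutators when one of the gradings is fixed by the involution) to be consequences of $I$, so $f$ becomes a linear combination of a small explicit family of monomials of the form $(x_{1,1}^+)^{n-k}$ times at most two homogeneous variables of nontrivial degree. By multihomogeneity one may assume $f$ is a single such monomial with unknown scalar. I then exhibit the evaluation $x_{i,1}^+\mapsto I_4$ and send each occurring graded variable to the appropriate one of $e_{12}+e_{34}$, $e_{13}+e_{24}$ or $e_{14}$ (with a sign adjustment according to the definition of $\nu_i$); this shows that every nonzero combination produces a nonzero element, forcing the scalar to vanish. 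The same computation simultaneously proves that the listed monomials are linearly independent modulo $\textnormal{Id}^\sharp(A)$, yielding the codimension formula in item 9).

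For the cocharacter decompositions in items 5)--8), I combine (\ref{293}) and (\ref{77}): the value of $c_n^\sharp(A)$ just computed equals the sum $\sum_i \binom{n}{|\lambda_i|} d_{\lambda_{i,1}}\cdots d_{\lambda_{i,2t}}$ over the multipartitions $\gen{\lambda_i}$ appearing in the claimed decomposition, so it suffices to show $m_{\gen{\lambda_i}}\ge 1$ for each one. For each $\gen{\lambda_i}$ I exhibit an explicit highest weight vector—typically $(x_{1,1}^+)^{n-1}x_{1,s}^\varepsilon$, $(x_{1,1}^+)^{n-2}x_{1,s}^\varepsilon x_{1,t}^{\varepsilon'}$, and so on—and, via Proposition \ref{calmult}, verify it is not a $(G,*)$-identity of $A$ through the same evaluation scheme above. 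The colength statement in item 9) then follows by summing the multiplicities.

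The main obstacle is bookkeeping rather than genuine difficulty: one has to track carefully, for each pair $(\nu_i,(g,h))$, which of the four homogeneous components $e_{11}+\cdots+e_{44}$, $e_{12}+e_{34}$, $e_{13}+e_{24}$, $e_{14}$ are symmetric and which are skew, and how this interacts with the coincidences $gh=1$ (so that $e_{14}\in A_1$) or $g=h$ (so that two graded summands collapse). The trickiest case is $W_{\nu_3}^{g,g}$, where the component of degree $g^2$ contains only $e_{14}$ (skew under $\nu_3$) while degree $g$ contains both a symmetric and a skew element, producing the extra character $\chi_{((n-2)_{1^+},(1)_{g^+},(1)_{g^-})}$ in item 5); the evaluation must be chosen so that the symmetric and skew pieces at degree $g$ are sent to linearly independent targets, e.g. $x_{1,g}^+\mapsto e_{12}+e_{34}$ twisted appropriately by $\nu_3$. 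Once these sign tables are set up correctly, the rest is a mechanical repetition of the argument in Lemma~\ref{18}.
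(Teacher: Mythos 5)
Your proposal follows exactly the scheme the paper intends: the paper explicitly omits the proof of this lemma, stating it is analogous to the proof given for $W_{\nu_1}^{g,h}$ (generate the $T_G^*$-ideal, reduce modulo it to an explicit monomial basis, kill coefficients by evaluation on $I_4$, $e_{12}+e_{34}$, $e_{13}+e_{24}$, $e_{14}$, then match the codimension against the hook-formula count and exhibit highest weight vectors via Proposition~\ref{calmult}). Your outline reproduces that argument faithfully, including the correct attention to the collapsing of homogeneous components when $gh=1$ or $g=h$ and the sign bookkeeping for $\nu_1,\nu_2,\nu_3$.
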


\begin{lemma}
Let $g,h\in G-\{1\}$ with $|h|=2$. Then,

		\begin{enumerate}

 \item[1)] $\textnormal{Id}^{\sharp}(W_{\nu_2}^{1,g})=\langle [{x_{1,1}^+},x_{2,1}], [{x_{1,1}^+},x_{2,g}], x_{1,1}^-x_{2,1}^-,x_{1,1}^-x_{2,g}^+,[x_{1,1}^-,x_{2,g}^-], x_{1,g}^+x_{2,g}^+, x_{1,g}^+x_{2,g}^-,x_{1,g}^-x_{2,g}^-, x_{1,r}\rangle_{T_G^*}$.
       
   \item[2)] $\textnormal{Id}^{\sharp}(W_{\nu_3}^{1,g})=\langle [{x_{1,1}^+},x_{2,1}], [{x_{1,1}^+},x_{2,g}], x_{1,1}^-x_{2,1}^-,x_{1,1}^-x_{2,g}^-,[x_{1,1}^-,x_{2,g}^+], x_{1,g}^-x_{2,g}^-, x_{1,g}^-x_{2,g}^+,x_{1,g}^+x_{2,g}^+, x_{1,r} \rangle_{T_G^*}$.

			\item[3)]  $\textnormal{Id}^{\sharp}(W_{\nu_3}^{h,h})=\langle [{x_{1,1}^+},x_{2,s}], x_{1,1}^-x_{2,1}^-, x_{1,1}^-x_{2,h},x_{1,h}^+x_{2,h}^+, x_{1,h}^-x_{2,h}^-,[x_{1,h}^+,x_{2,h}^-]
			,x_{1,r}\mid s\in \{1,h\}\rangle_{T_G^*} $.

  \item[4)] $\mathcal{X}_n(W_{\nu_2}^{1,g})=(\chi_{((n)_{1^+})}; \chi_{((n-1)_{1^+}, (1)_{1^-})};\chi_{((n-1)_{1^+}, (1)_{g^+})};\chi_{((n-1)_{1^+}, (1)_{g^-} )};\chi_{((n-2)_{1^+},  (1)_{1^-}, (1)_{g^-})})$.

   \item[5)] $\mathcal{X}_n(W_{\nu_3}^{1,g})=(\chi_{((n)_{1^+})}; \chi_{((n-1)_{1^+}, (1)_{1^-})};\chi_{((n-1)_{1^+}, (1)_{g^+})};\chi_{((n-1)_{1^+}, (1)_{g^-} )};\chi_{((n-2)_{1^+},  (1)_{1^-}, (1)_{g^+})})$.

			\item[6)]  $\mathcal{X}_n(W_{\nu_3}^{h,h})=(\chi_{((n)_{1^+})};\chi_{((n-1)_{1^+}, (1)_{1^-})}; \chi_{((n-1)_{1^+}, (1)_{h^+})};\chi_{((n-1)_{1^+}, (1)_{h^-})};\chi_{((n-2)_{1^+}, (1)_{h^+}, (1)_{h^-})}).$ 
   
   	\item[7)] $c_n^{\sharp}(A)=1+3n+{2}\displaystyle\binom{n}{2}$ \;and\; $l_n^{\sharp}(A)=5,$ for all $A\in \{W_{\nu_2}^{1,g},W_{\nu_3}^{1,g},W_{\nu_3}^{h,h} \}.$
   
		\end{enumerate}
	\end{lemma}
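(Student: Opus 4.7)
The plan is to follow exactly the template established for $W_{\nu_1}^{g,h}$ and the earlier analogous lemmas, since each of $W_{\nu_2}^{1,g}$, $W_{\nu_3}^{1,g}$ and $W_{\nu_3}^{h,h}$ admits essentially the same treatment. The three steps are: (i) identify the $T_G^*$-ideal, (ii) produce a monomial basis for $P_n^\sharp(A)$, and (iii) decompose the $\langle n \rangle$-cocharacter by exhibiting explicit highest weight vectors and matching dimensions.

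For step (i), for each algebra $A$, I would first verify directly (by evaluating on the basis $\{I_4,\, e_{12}+e_{34},\, e_{13}+e_{24},\, e_{14}\}$ of $W$ with the appropriate signs for $\nu_2$ or $\nu_3$) that every polynomial listed in items $1)$--$3)$ lies in $\textnormal{Id}^\sharp(A)$. Conversely, letting $I$ be the $T_G^*$-ideal they generate, I would show that modulo $I$ any $f \in P_n^\sharp$ can be rewritten as a linear combination of multilinear monomials of one of the following five shapes:
\[
(x_{1,1}^+)^n, \quad (x_{1,1}^+)^{n-1} x_{i,1}^-, \quad (x_{1,1}^+)^{n-1} x_{i,g}^{\varepsilon}, \quad (x_{1,1}^+)^{n-1} x_{i,g}^{-\varepsilon}, \quad (x_{1,1}^+)^{n-2} x_{i,1}^- x_{j,g}^{\delta},
\]
where the signs $\varepsilon,\delta$ are dictated by which of $\nu_2,\nu_3$ is in use (and $g$ is replaced by $h$ in the last case). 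The relations $[x_{1,1}^+, x_{2,s}]$ allow commutation of degree-$1$ symmetric variables past everything, while the quadratic relations in the degree-$g$ (or degree-$h$) part kill every longer product; the skew degree-$1$ identities reduce every occurrence of $x_{1,1}^-$ to a single factor. By multihomogeneity one may assume $f$ is a scalar multiple of one of the above monomials, and the evaluations $x_{i,1}^+\mapsto I_4$, $x_{i,1}^- \mapsto e_{14}$ (in the $h=h^{-1}$ case), $x_{i,g}^{\varepsilon}\mapsto e_{12}+e_{34}$ or $e_{13}+e_{24}$ (with the sign convention of $\nu_2$ or $\nu_3$) then force the scalar to vanish. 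This gives $I=\textnormal{Id}^\sharp(A)$ and simultaneously produces an explicit basis of $P_n^\sharp(A)$, from which $c_n^\sharp(A) = 1+3n+2\binom{n}{2}$.

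For step (iii), I use the hook formula and \eqref{293}, \eqref{77} to compute
\[
c_n^\sharp(A) = d_{(n)} + 3\binom{n}{n-1}d_{(n-1)}d_{(1)} + \binom{n}{n-2}d_{(n-2)}d_{(1)}d_{(1)},
\]
which matches $1+3n+2\binom{n}{2}$ exactly when every multiplicity in items $4)$--$6)$ equals $1$. To confirm this, for each multipartition $\langle \lambda\rangle$ in items $4)$--$6)$ I exhibit a highest weight vector -- e.g.\ $(x_{1,1}^+)^{n-1} x_{1,1}^-$, $(x_{1,1}^+)^{n-1}x_{1,g}^{\pm}$, $(x_{1,1}^+)^{n-2} x_{1,1}^- x_{1,g}^{\pm}$, and analogously for $W_{\nu_3}^{h,h}$ with $h$ in place of $g$ -- and apply Proposition~\ref{calmult}: the same evaluations used in step (ii) show each such vector is nonzero modulo $\textnormal{Id}^\sharp(A)$, so $m_{\langle\lambda\rangle}\geq 1$. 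Matching the total dimension then forces equality in every multiplicity, proving items $4)$--$6)$ and, by summing, the colengths $l_n^\sharp(A)=5$ in item $7)$.

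The main obstacle is the case $W_{\nu_3}^{h,h}$ with $|h|=2$: here both non-trivial grading slots collapse to the single group element $h$, so the degree-$h$ symmetric and skew components each carry two basis elements and the relation $[x_{1,h}^+,x_{2,h}^-]$ must be used carefully (together with $x_{1,h}^+x_{2,h}^+$ and $x_{1,h}^-x_{2,h}^-$) to reduce products to the canonical form without collapsing basis monomials one actually needs. Choosing the evaluations so that $x_{1,h}^+$ and $x_{1,h}^-$ land on linearly independent images that survive in the presence of $e_{14}$ in degree~$1$ is the delicate point; once this is done correctly, linear independence of the corresponding highest weight vectors follows exactly as in the proof of Lemma~\ref{18}.
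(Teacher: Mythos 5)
Your proposal follows exactly the template the paper itself invokes for these lemmas (the paper omits the proof as ``similar to the previous one,'' i.e.\ to the $W_{\nu_1}^{g,h}$ case), and it is correct: verify the listed generators are identities, reduce modulo the ideal to a monomial basis, kill the coefficients by evaluating on $I_4$, $e_{12}+e_{34}$, $e_{13}+e_{24}$, $e_{14}$ with the appropriate symmetry types, and then match $c_n^{\sharp}=1+3n+2\binom{n}{2}$ against multiplicity-one highest weight vectors via Proposition~\ref{calmult}. Two small points to tidy: for $W_{\nu_3}^{h,h}$ the surviving quadratic monomial is $x_{i,h}^{+}x_{j,h}^{-}$ (both factors of degree $h$), not $x_{i,1}^{-}x_{j,h}^{\delta}$, and the coefficient attached to the three-component multipartition $((n-2)_{1^+},(1),(1))$ is the multinomial $n!/(n-2)!=2\binom{n}{2}$ rather than $\binom{n}{n-2}=\binom{n}{2}$ --- it is with this value that your dimension count actually closes.
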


\section{\texorpdfstring{$(G,*)$-colength bounded by 1}{(G,*)-colength bounded by 1}}

This section aims to characterize the varieties of $(G,*)$-algebras whose corresponding colength sequences are bounded by $1$.

It is known that if $A = F + J(A)$ is a finite-dimensional $(G,*)$-algebra over $F$, where $J(A)$ is its Jacobson radical, then $J = J(A)$ is a $T_G^*$-ideal of $A$ (see \cite[Theorem 2.8]{Lorena}). Moreover, $J$ can be decomposed as the direct sum of $G$-graded subspaces
	\begin{equation} \label{6}
		J=J_{00}+J_{10}+J_{01}+J_{11},
	\end{equation} where     $$J_{00}=\{j\in J\mid 1_Fj=j1_F=0\},\quad J_{10}=\{j\in J\mid 1_Fj=j 
    \mbox{ and }j1_F=0\}$$
      $$J_{01}=\{j\in J\mid 1_Fj=0 
    \mbox{ and }j1_F=j\}\quad \mbox{ and }\quad J_{11}=\{j\in J\mid 1_Fj=j1_F=j\},$$ (see, for instance, \cite[Lemma 2]{Giambruno}). By definition, it is clear that the subspaces $J_{00}$ and $J_{11}$ are invariant under the involution while $(J_{10})^{*} = J_{01}$. Furthermore, for $i, k, r, s \in \{0,1\}$ we have $J_{ik} J_{rs} \subseteq \delta_{kr} J_{is}$, where $\delta_{kr}$ denotes the Kronecker delta function. Therefore, we may further decompose $J_{00}$ and $J_{11}$ into symmetric and skew homogeneous components as follows:
	$$J_{00}= \underset{g\in G}{\sum} ((J_{00})_g^{+}+ (J_{00})_g^{-})\mbox{ and }J_{11}= \underset{g\in G}{\sum} ((J_{11})_g^{+}+ (J_{11})_g^{-}),$$ where $(J_{ii})_g^+=\{a\in (J_{ii})_g\mid a^*=a\}$ and $(J_{ii})_g^-=\{a\in (J_{ii})_g\mid a^*=-a\}$, $g\in G$ and $i\in \{0,1\}$.

  Motivated by Theorem \ref{1}, we investigate $(G,*)$-algebras of type $A = F + J(A)$ and establish conditions in order to reduce the components in the decomposition given in (\ref{6}). To reach our goal, we consider the set $$\mathcal{I}_3=\{A_2^*, A_2^{\mathsf{g}}\mid \mathsf{g}=(1,g,1,g)\in G^4\}.$$
	
	\begin{lemma} \label{3}
		Let $A=F+J(A)$ be a $(G,*)$-algebra such that $Q\notin \textnormal{var}^{\sharp}(A)$, for all $Q \in \mathcal{I}_3$. Then, $J_{10}=J_{01}=\{0\}$.
	\end{lemma}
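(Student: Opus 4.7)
The plan is to prove the contrapositive: assuming $J_{10}\neq \{0\}$, I would exhibit an algebra from $\mathcal{I}_3$ inside $\textnormal{var}^{\sharp}(A)$. The strategy is to produce a $(G,*)$-subalgebra $B\subseteq A$ and a $T_G^*$-ideal $I\subseteq B$ such that $B/I$ is $(G,*)$-isomorphic to $A_2^*$ or to $A_2^{\mathsf{g}}$ with $\mathsf{g}=(1,g,1,g)$; then the chain $\textnormal{var}^{\sharp}(B/I)\subseteq \textnormal{var}^{\sharp}(B)\subseteq \textnormal{var}^{\sharp}(A)$ gives the contradiction with the hypothesis. Once this is established, $J_{01}=(J_{10})^*=\{0\}$ follows automatically by applying the involution.

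The central step is to extract a suitable $a\in J_{10}$. Because $J_{00}$ and $J_{11}$ are nilpotent subalgebras (they lie in the nilpotent ideal $J$) and the Peirce identities $J_{ik}J_{rs}\subseteq \delta_{kr}J_{is}$ make $J_{10}$ a $G$-graded $(J_{11},J_{00})$-bimodule, the descending family of graded subspaces $V_{k,l}:=J_{11}^k\cdot J_{10}\cdot J_{00}^l$ eventually vanishes. Selecting a pair $(k,l)$ that maximizes $k+l$ with $V_{k,l}\neq\{0\}$, one gets $J_{11}\cdot V_{k,l}=V_{k,l}\cdot J_{00}=0$; picking a nonzero homogeneous $a\in(V_{k,l})_g$ and applying $*$ then yields $J_{11}a=aJ_{00}=0$ and $a^*J_{11}=J_{00}a^*=0$.

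Combined with the Peirce-forced vanishings $a^2=(a^*)^2=0$, these annihilation conditions make $B:=F\cdot 1_F+Fa+Fa^*+Fu+Fv$, with $u:=aa^*\in J_{11}$ and $v:=a^*a\in J_{00}$, closed under multiplication and under $*$: a short computation yields $au=ua=av=va=a^*u=ua^*=a^*v=va^*=0$ and $u^2=v^2=uv=vu=0$ (for instance $u^2=a(a^*a)a^*=(av)a^*=0$). Hence $B$ is a $(G,*)$-subalgebra of $A$ and $I:=\textnormal{span}\{u,v\}$ is a $*$-invariant two-sided ideal of $B$. In $B/I$ the classes of $1_F$, $a$ and $a^*$ are linearly independent (since $I\subseteq J_{11}+J_{00}$ while $a\in J_{10}$, $a^*\in J_{01}$ lie in different Peirce components), and the multiplication rules on these classes match exactly those of the basis $e_{11}+e_{44},\, e_{12},\, e_{34}$ of $A_2$, with the same action of the reflection involution and the same $G$-grading. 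The induced linear map $\bar{1_F}\mapsto e_{11}+e_{44}$, $\bar{a}\mapsto e_{12}$, $\bar{a^*}\mapsto e_{34}$ is therefore the sought $(G,*)$-isomorphism, realising $A_2^*$ or $A_2^{\mathsf{g}}$ depending on whether $g=1$ or $g\neq 1$.

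The main obstacle is the first step, the production of $a\in J_{10}$ annihilated by $J_{11}$ on the left and by $J_{00}$ on the right. Without this control the subalgebra generated by $1_F$, $a$ and $a^*$ can propagate through higher powers of the radical, and its quotients need not be identifiable with members of $\mathcal{I}_3$. The Peirce-filtration argument on the $V_{k,l}$ above resolves this difficulty, after which the construction of $B$, $I$ and the explicit isomorphism to $A_2^*$ or $A_2^{\mathsf{g}}$ reduces to routine verification of the product table.
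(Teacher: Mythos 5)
Your proof is correct and follows essentially the same strategy as the paper: realize $A_2^{*}$ or $A_2^{\mathsf{g}}$ as a quotient of the $(G,*)$-subalgebra generated by $1_F$, $a$ and $a^{*}$ for a nonzero homogeneous $a\in J_{10}$, conclude via the chain of variety inclusions, and obtain $J_{01}=(J_{10})^{*}=\{0\}$ by applying the involution. The only differences are minor: the paper takes an arbitrary nonzero $a\in (J_{10})_g$ and quotients by the $T_G^*$-ideal generated by $aa^{*}$ and $a^{*}a$ (delegating the degree-$1$ case to Lemma 14 of \cite{LM}), whereas you first normalize $a$ through the Peirce filtration $J_{11}^{k}J_{10}J_{00}^{l}$ so that the subalgebra is five-dimensional and the ideal is exactly $\mathrm{span}\{aa^{*},a^{*}a\}$ --- a legitimate refinement that makes the survival of $\overline{a}$ in the quotient immediate and treats $g=1$ and $g\neq 1$ uniformly.
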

	
	\begin{proof}
		Since $F+(J(A))_{1}$ is a $(G,*)$-subalgebra of $A$ with trivial $G$-grading, by Lemma $14$ in \cite{LM} we have $(J_{10})_1=(J_{01})_1=0$. Consider $g\in G- \{1\}$ and suppose that there exists a nonzero element $a\in (J_{10})_{g}$. Let $R$ be the \Gstar subalgebra of $A$ generated by $1_F,a$ and $a^*$ endowed with induced $G$-grading and induced involution. Let $I$ be the $T_G^*$-ideal generated by $aa^*$ and $a^*a$. Hence, $R/I$ is a $(G,*)$-algebra with $$(R/I)_1^{+}=\mbox{span}_F\{\overline{1_F}\},\,\, (R/I)_g^{+}=\mbox{span}_F\{\overline{a}+\overline{a^*}\},\,\,(R/I)_g^{-}=\mbox{span}_F\{\overline{a}-\overline{a^*}\}.$$
		
		Define the isomorphism of $(G,*)$-algebras $\varphi: R/I \rightarrow A_2^\mathsf{g}$ given by $\varphi(\overline{1_F})= e_{11}+e_{44},\varphi( \overline{a})=e_{12}$ and $\varphi(\overline{a^*})= e_{34}.$ Hence, $A_2^\mathsf{g}\in \textnormal{var}^{\sharp}(A)$.
	\end{proof}
	
	As a consequence, under the hypothesis of the previous lemma we have $A= (F+J_{11})\oplus J_{00}$. Since $J_{00}$ is a nilpotent $(G,*)$-subalgebra of $A$, in the following results of this section, unless otherwise mentioned, we assume that $A$ is a \Gstar algebra of type $F+J_{11}$.

\begin{lemma} \label{73} Let $g\in G-\{1\}$.
\begin{enumerate}
\item[1)] If $C_{k,*}\notin \textnormal{var}^{\sharp}(A)$ then $b^{k-1}=0$, for all $b\in (J_{11})_1^- $.

    \item[2)] If $C_{k}^\mathsf{g}\notin \textnormal{var}^{\sharp}(A)$ then $b^{k-1}=0$, for all $b\in (J_{11})_g^+ $.

    \item[3)] If $ C_{k,*}^\mathsf{g}\notin \textnormal{var}^{\sharp}(A)$ then $b^{k-1}=0$, for all $b\in (J_{11})_g^-$.
\end{enumerate}
	\end{lemma}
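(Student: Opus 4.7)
The plan is to prove all three items by contrapositive, closely mirroring the argument used in Lemma~\ref{3}. Assuming there exists $b$ in the prescribed homogeneous component of $J_{11}$ with $b^{k-1}\neq 0$, I will construct a $(G,*)$-subalgebra of $A$ that admits $C_{k,*}$ (respectively $C_k^{\mathsf{g}}$ or $C_{k,*}^{\mathsf{g}}$) as a $T_G^*$-quotient, which places the target algebra in $\textnormal{var}^{\sharp}(A)$ and contradicts the hypothesis.

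The common construction is the following. Let $B$ be the $(G,*)$-subalgebra of $A$ generated by $1_F$ and $b$. Since $b\in J_{11}$, the element $1_F$ acts as a two-sided identity on $b$, and since $b$ commutes with itself, $B$ is a commutative algebra spanned by $\{1_F, b, b^2, \ldots , b^{m-1}\}$, where $m$ is the nilpotency index of $b$; by the assumption $b^{k-1}\neq 0$, we have $m\geq k$, and these elements are linearly independent (any relation would express $1_F$ as a polynomial in $b$ with zero constant term, contradicting nilpotency of $b$). Since $b$ is homogeneous and $b^*=\pm b$, the subalgebra $B$ is closed under both the grading and the involution. I then let $I$ be the $T_G^*$-ideal of $B$ generated by $b^k$. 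Because $B$ is commutative and each $b^i$ is homogeneous with $(b^i)^*=\pm b^i$, the ideal $I$ coincides with $\mathrm{span}_F\{b^k, b^{k+1},\ldots\}$. Hence $\dim_F B/I = k$ and $\overline{b}^{k-1}\neq 0$ in $B/I$.

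The last step is to define, in each case, the map $\varphi\colon B/I \to C_{k,*}$ (resp.\ $C_k^{\mathsf{g}}$, $C_{k,*}^{\mathsf{g}}$) by $\overline{1_F}\mapsto I_k$ and $\overline{b}\mapsto E_1$, and to check that $\varphi$ is a $(G,*)$-isomorphism. In item $1)$, $b$ is skew of degree $1$, which matches $E_1\in C_{k,*}$ under the trivial grading together with the involution in (\ref{invCk}); in item $2)$, $b$ is symmetric of degree $g$, and $E_1\in C_k^{\mathsf{g}}$ has degree $g$ under the elementary grading and is symmetric under the trivial involution. The most delicate verification is in item $3)$, where $b$ is skew of degree $g$: by commutativity of $B$ one obtains $(b^i)^* = (-1)^i b^i$ of degree $g^i$, which precisely matches $(E_1^i)^* = (-1)^i E_1^i$ of degree $g^i$ in $C_{k,*}^{\mathsf{g}}$ via (\ref{invCk}). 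Once the isomorphism is established in each case, the inclusion of the target algebra in $\textnormal{var}^{\sharp}(A)$ follows, closing the contrapositive. All remaining checks are routine once the commutativity of $B$ and the transparent form of $I$ are in place.
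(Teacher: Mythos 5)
Your proposal is correct and follows essentially the same route as the paper: form the $(G,*)$-subalgebra generated by $1_F$ and $b$, quotient by the ideal generated by $b^k$, and exhibit an explicit isomorphism onto $C_{k,*}$, $C_k^{\mathsf{g}}$ or $C_{k,*}^{\mathsf{g}}$ sending $\overline{1_F}\mapsto I_k$ and $\overline{b}\mapsto E_1$. The only difference is that you carry out this construction for item 1) as well, whereas the paper delegates that case to Lemma 4.6 of \cite{MN}; this is a harmless, self-contained variant of the same argument.
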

	
	\begin{proof} The first item follows by Lemma $4.6$ in \cite{MN}. Now, suppose that there exists $b\in (J_{11})_g^+\cup (J_{11})_g^-$ such that $b^{k-1}\neq 0$ and let $R$ be the $(G,*)$-subalgebra of $A$ generated by $1_F$ and $b$. Considering $I$ the $T_G^*$-ideal of $R$ generated by $b^{k}$, we have that $R/I$ is a $(G,*)$-algebra linearly generated by $\overline{1_F}, \overline{b},  \ldots , \overline{b}^{k-1}$. Note that the map $\varphi: R/I \rightarrow C_{k}$ given by $\varphi(\overline{1_F})=e_{11}+\cdots +e_{kk}$ and $\varphi(\overline{b})=e_{12}+\cdots+ e_{k-1\, k}$ defines an isomorphism of $(G,*)$-algebras and we have
		$R/I\cong C_k^\mathsf{g}$ if $b\in (J_{11})_g^+$, while
		 $R/I\cong C_{k,*}^\mathsf{g}$ if $b\in (J_{11})_g^-$.
\end{proof}

The $T_G^*$-ideals of the $(G,*)$-algebras $(FC_2)_{*}$, $(FC_2)^{\sharp}$, $FC_p$ and $M_{g,\rho}$ were completely described in \cite{Mara,Valenti}. As a consequence, we have the following. 
    
    \begin{remark} \label{12}
		For $k \geq 2$ we have
		\begin{enumerate}
						\item[1)] $C_{k,*}\in \textnormal{var}^{\sharp}((FC_2)_{*}).$
			\item[2)] If $|G|$ is even and $h\in G$ with $|h|=2$ and $C_2=\langle h\rangle$ then  $C_{k,*}^\mathsf{h}\in \textnormal{var}^{\sharp}((FC_2)^{\sharp}).$
			\item[3)] For all prime $p\mid  |G|$, $h\in G$ with $|h|=p$ and $C_p=\langle h\rangle$ we have $C_{k}^\mathsf{h}\in \textnormal{var}^{\sharp}(FC_p)$.

   	\item[4)] $N_{3}^*$, $U_{3}^*$, $A_{2}^* \in \textnormal{var}^{\sharp}(M_{1,\rho})$ and $N_{3}^{\mathsf{g}}$, $U_{3}^{\mathsf{g}}$, $A_{2}^{\mathsf{g}} \in \textnormal{var}^{\sharp}(M_{g,\rho}),$ $g\in G- \{1\}$.
		\end{enumerate}
	\end{remark}

The next theorem shows that the codimension and the colength sequence are closely related in the case where $l_n^\sharp(A)$ is bounded by $1$. We also present an alternative characterization of these varieties via the exclusion of algebras from the variety.

\begin{theorem} Let $A$ be a finite dimensional $(G,*)$-algebra over a fiel of characteristic zero. The following conditions are equivalent
\begin{enumerate}
    \item[1)] $A_{2}^*,A_2^{\mathsf{g}},C_{2,*}, C_{2}^\mathsf{g}, C_{2,*}^\mathsf{g}\notin \textnormal{var}^{\sharp}(A)$, for all $g\in G- \{1\}.$
    \item[2)] $A\sim_{T_{G}^*} C\oplus N$ or $A\sim_{T_{G}^*} N$, where $N$ is a nilpotent \Gstar algebra and $C$ is a commutative non-nilpotent algebra with trivial involution and trivial $G$-grading.
     \item[3)] $c_{n}^{\sharp}(A)\leq q,$ for some constant $q$.
     \item[4)] $c_n^{\sharp}(A)\leq 1$.
       \item[5)] $l_{n}^{\sharp}(A)\leq 1.$
\end{enumerate}
\end{theorem}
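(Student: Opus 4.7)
The plan is to establish the equivalence via the cycle $1)\Rightarrow 2)\Rightarrow 4)\Rightarrow 5)\Rightarrow 1)$, and to slot in $3)$ through the trivial $4)\Rightarrow 3)$ together with a direct $3)\Rightarrow 1)$ that mirrors $5)\Rightarrow 1)$ but uses codimensions in place of colengths.

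For $1)\Rightarrow 2)$, the first step is to convert condition $1)$ into the exclusions required by Theorem \ref{1}: by Remark \ref{12}, each of $(FC_2)_*$, $(FC_2)^\sharp$, $FC_p$, and $M_{g,\rho}$ has in its \Gstar variety one of the algebras excluded in $1)$, so $1)$ forces the full list of exclusions of Theorem \ref{1}. This produces a decomposition $A\sim_{T_G^*} B_1\oplus\cdots\oplus B_m$ with $\dim_F B_i/J(B_i)\leq 1$. The nilpotent summands go into $N$; for each non-nilpotent $B_i=F+J(B_i)$, Lemma \ref{3} (applicable since $A_2^*, A_2^\mathsf{g}\notin\textnormal{var}^\sharp(A)\supseteq\textnormal{var}^\sharp(B_i)$) gives $J(B_i)_{10}=J(B_i)_{01}=0$, reducing $B_i$ to $(F+J_{11})\oplus J_{00}$ with $J_{00}$ nilpotent. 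Applying Lemma \ref{73} with $k=2$ then kills $(J_{11})_1^-$, $(J_{11})_g^+$, and $(J_{11})_g^-$ for all $g\neq 1$, leaving only $(J_{11})_1^+$. Commutativity of $F+(J_{11})_1^+$ is immediate from the involution: for $a,b\in (J_{11})_1^+$ one has $[a,b]^*=b^*a^*-a^*b^*=ba-ab=-[a,b]$, placing $[a,b]$ into $(J_{11})_1^-=0$. Collecting these commutative unital pieces into $C$ (with trivial grading and trivial involution) and the nilpotent pieces into $N$ yields $A\sim_{T_G^*} C\oplus N$, or $A\sim_{T_G^*} N$ if no non-nilpotent $B_i$ survives.

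The remaining steps are short. For $2)\Rightarrow 4)$, when $A\sim_{T_G^*} N$ with $N^k=0$ one has $c_n^\sharp(A)=0$ for $n\geq k$; when $A\sim_{T_G^*} C\oplus N$, every variable other than $x_{i,1}^+$ is an identity on $C$ and commutativity collapses $P_n^\sharp(C)$ to the one-dimensional span of $x_{1,1}^+\cdots x_{n,1}^+$, so $c_n^\sharp(C)=1$; for $n$ past the nilpotency index of $N$ we then obtain $c_n^\sharp(A)=c_n^\sharp(C)=1$. The implication $4)\Rightarrow 5)$ is immediate from $l_n^\sharp(A)\leq c_n^\sharp(A)$, and $5)\Rightarrow 1)$ follows from Remark \ref{obs} item~$1)$: any of $C_{2,*}, C_2^\mathsf{g}, C_{2,*}^\mathsf{g}, A_2^*, A_2^\mathsf{g}$ lying in $\textnormal{var}^\sharp(A)$ would have its own colength (equal to $2$ or $5$ by Lemmas \ref{51}--\ref{52} and the earlier remarks) bounded above by $l_n^\sharp(A)\leq 1$, a contradiction.

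Finally, $4)\Rightarrow 3)$ is trivial: take $q=\max(1,c_1^\sharp(A),\ldots,c_{n_0}^\sharp(A))$ for the threshold $n_0$ past which $c_n^\sharp(A)\leq 1$, noting that $\dim_F A<\infty$ makes the early terms finite. For $3)\Rightarrow 1)$, the same Remark \ref{obs} logic applies in codimension form: a short direct calculation shows that each excluded algebra has $c_n^\sharp$ tending to infinity (for instance $c_n^\sharp(C_{2,*})=n+1$, and similarly unbounded for the others), so none can belong to $\textnormal{var}^\sharp(A)$ under the uniform bound $c_n^\sharp(A)\leq q$. The main obstacle of the whole proof lies in $1)\Rightarrow 2)$: the radical $J(B_i)$ has to be dismantled componentwise in the correct order, and the slick final commutativity step depends crucially on the exclusion of $C_{2,*}$ having already eliminated $(J_{11})_1^-$.
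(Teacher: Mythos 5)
Your proposal is correct and follows essentially the same route as the paper: the core implication $1)\Rightarrow 2)$ is handled identically (Remark \ref{12} plus Theorem \ref{1} to get polynomial growth and the decomposition into summands $B_i$ with $\dim B_i/J(B_i)\le 1$, then Lemma \ref{3} to kill $J_{10},J_{01}$ and Lemma \ref{73} with $k=2$ to kill $(J_{11})_1^-$ and $(J_{11})_g^{\pm}$, with commutativity of $F+(J_{11})_1^+$ falling out of $[(J_{11})_1^+,(J_{11})_1^+]\subseteq (J_{11})_1^-=\{0\}$). The remaining implications are exactly the ones the paper dismisses as trivial or easy; your explicit cycle and the inequality $l_n^{\sharp}(A)\le c_n^{\sharp}(A)$ are a legitimate filling-in of those details rather than a different argument.
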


\begin{proof}
    It is trivial to notice that condition $2)$ implies both conditions $ 3)$, $4)$ and $5)$. Moreover, it is easy to check that both $3)$, $4)$ and $5)$ imply $1)$. Then, we concentrate our effort to prove that condition $1)$ implies condition $2)$. 
    
    Assume that $A_{2}^*,$ $A_2^\mathsf{g},$ $C_{2,*}, C_{2}^\mathsf{g},$ $C_{2,*}^\mathsf{g}$ $\notin \textnormal{var}^{\sharp}(A)$, for all $g\in G- \{1\}$. By Remark \ref{12} and Theorem \ref{1}, $A$ has polynomial growth. Therefore, $$A\sim_{T_{G}^*} B_1\oplus \cdots \oplus B_m,$$ where $B_i$ is a nilpotent \Gstar algebra or $B_i=F+J(B_i)$, for $1\leq i\leq m$. If $B_i$ is nilpotent for all $i$ then we are done. Suppose that there exists $i\in \{1,\ldots , m\}$ such that $B_i=F+J(B_i)$. Consider $J(B_i)=J_{00}+J_{10}+J_{01}+J_{11}$ the decomposition of $J(B_i)$ as given in (\ref{6}). By Lemmas \ref{3} and \ref{73}, we get $$J_{01}=J_{10}= (J_{11})_h=(J_{11})_1^- =\{0\},$$ for all $h\in G-\{1\}$. Consequently, we can rewrite  
 $B_i= (F+ (J_{11})_1^{+}) \oplus J_{00},$ where $J_{00}$ is a nilpotent $(G,*)$-algebra. Since $[(J_{11})_1^+,(J_{11})_1^+]\subseteq (J_{11})_1^-=\{0\}$ we conclude that $F+ (J_{11})_1^{+}$ is a commutative $(G,*)$-algebra endowed with trivial $G$-grading and trivial involution, and so the condition $2)$ is satisfied.
 \end{proof}

\section{\texorpdfstring{Classifying $(G,*)$-varieties of $(G,*)$-colength bounded by $4$}{Classifying (G,*)-varieties of (G,*)-colength bounded by 4}}

The main purpose of this section is to give the classification of $(G,*)$-varieties that have the corresponding colength sequence bounded by $4$.

We still assume that $A$ is a finite-dimensional $(G,*)$-algebra of the type $F+J_{11}$.

\begin{remark} \label{187}
   Suppose that there exist symmetric or skew elements $a\in (J_{11})_g$ and $b\in (J_{11})_h$ such that $ab\neq 0$, for some $g,h\in G$. If $ba\in Fab$ then there exists $\alpha \in F$ such that $ba=\alpha ab$. Since $ab = \pm (ba)^*= \pm  (\alpha ab)^*= \alpha ba=\alpha^2 ab$ we have either $ab=ba$ or $ab=-ba$.
\end{remark}

  Consider the following set of $(G, \ast)$-algebras $$\mathcal{I}_4=\{C_{3}^\mathsf{g}, C_{3,*}^\mathsf{g},C_{4,*},C_{4}^\mathsf{q}, C_{4,*}^\mathsf{q},U_{3,*}, W_{\imath}^{r,s}, W_{\jmath}^{1,g}, \mathcal{G}_{2,\diamond }^{r,s},\mathcal{G}_{2,\eta }^{1,g},W_{\nu_3}^{g,g}, \mathcal{G}_{2,\diamond}^{g,g}\mid g,h,r,s,q\in G-\{1\},$$ $$  |g|>2, |q|=2, r\neq s, \imath\in \{\nu_1, \nu_2, \nu_3\},\jmath \in \{\nu_2,\nu_3\}, \diamond \in \{\psi,\tau,  \gamma\},\eta \in \{\tau, \gamma\}\}.$$ 
	
	\begin{lemma} \label{10}
		Let $A$ be a $(G,*)$-algebra such that $Q\notin \textnormal{var}^{\sharp}(A)$, for all $Q\in \mathcal{I}_4$. Then, $(J_{11})_g(J_{11})_h=(J_{11})_r(J_{11})_r=(J_{11})_1^- (J_{11})_r=[(J_{11})_1^+,(J_{11})_1^+]=\{0\}$, for all $g,h,r\in G- \{1\},$ with $g\neq h$ and $|r|>2$.
	\end{lemma}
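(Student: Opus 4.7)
The strategy is to prove each of the four vanishing statements separately by contradiction, in the spirit of Lemmas~\ref{3} and~\ref{73}. Suppose the corresponding product (or commutator) is nonzero; then there exist homogeneous symmetric or skew elements $a,b$ (of prescribed homogeneous degrees and parities under $*$) witnessing the failure. I form the $(G,*)$-subalgebra $R$ of $A$ generated by $1_F, a, a^*, b, b^*$, and pass to the quotient $R/I$ by a $T_G^*$-ideal $I$ generated by every higher-order product that is forced to vanish (by Lemma~\ref{73}, by natural nilpotence relations such as $aa^*, a^*a, bb^*, b^*b$, and by any additional products required to make $R/I$ finite dimensional). The resulting quotient is of small dimension, and I exhibit an explicit $(G,*)$-isomorphism from $R/I$ onto one of the algebras in $\mathcal{I}_4$, contradicting the hypothesis $Q \notin \textnormal{var}^\sharp(A)$.

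A preparatory reduction uses Lemma~\ref{73} together with the hypothesis $C_3^{\mathsf{g}}, C_{3,*}^{\mathsf{g}}, C_{4,*} \notin \textnormal{var}^\sharp(A)$: every symmetric or skew $b \in (J_{11})_g$ with $g \neq 1$ satisfies $b^2 = 0$, and every $b \in (J_{11})_1^-$ satisfies $b^3 = 0$. Combined with Remark~\ref{187}, whenever the candidate witnesses satisfy $ba \in Fab$ one may normalize to $ba = \pm ab$, keeping $\dim R/I$ small. Each case is then matched with a target in $\mathcal{I}_4$ as follows: (i) $(J_{11})_g (J_{11})_h \neq 0$ with $g \neq h$ in $G - \{1\}$ produces a quotient isomorphic to one of $W_{\nu_i}^{g,h}$ or $\mathcal{G}_{2,\diamond}^{g,h}$ (or the degenerate variant for $gh = 1$, for $g$ of order $2$, or for $g = h$, landing in $W_{\nu_3}^{g,g}$ or $\mathcal{G}_{2,\diamond}^{g,g}$); (ii) $(J_{11})_r (J_{11})_r \neq 0$ with $|r| > 2$ produces $\mathcal{G}_{2,\diamond}^{r,r}$; (iii) $(J_{11})_1^- (J_{11})_r \neq 0$ produces $W_\jmath^{1,g}$ or $\mathcal{G}_{2,\eta}^{1,g}$ according to the parity of the witness in $(J_{11})_r$; and (iv) if $a, b \in (J_{11})_1^+$ with $[a,b] \neq 0$, the subalgebra generated by $1_F, a, b$ modulo appropriate higher products maps isomorphically onto $U_{3,*}$.

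The principal difficulty is bookkeeping rather than conceptual. In each sub-case one must verify that (a) the relations chosen to generate $I$ are compatible with the assumed nonvanishing of $ab$ (or $[a,b]$), so that the quotient is nonzero; (b) the map from $R/I$ to the target is a grading-preserving $*$-homomorphism, which amounts to checking on each generator; and (c) the involution type of the target, among $\{\nu_1, \nu_2, \nu_3\}$ for the $W$-algebras and $\{\psi, \tau, \gamma\}$ for the $\mathcal{G}_2$-algebras, is correctly read off from the parities of $a, b$ under $*$ and from the sign in the normalization $ba = \pm ab$. The degenerate scenarios $gh = 1$, $g = h$, $g = 1$, or $|g| = 2$ each collapse the grading and force distinct choices of target, which is precisely the reason the list $\mathcal{I}_4$ is as rich as it is.
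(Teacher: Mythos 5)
Your overall strategy coincides with the paper's: argue by contradiction, build the $(G,*)$-subalgebra $R$ generated by $1_F$ and the homogeneous witnesses, pass to a quotient, and exhibit an isomorphism onto an excluded algebra from $\mathcal{I}_4$ (with the commutator case $[(J_{11})_1^+,(J_{11})_1^+]$ reduced to the $*$-algebra $U_{3,*}$, which the paper simply imports from the literature). However, there is a genuine gap in your plan: you only treat the situation where $ba\in Fab$, in which Remark~\ref{187} normalizes $ba=\pm ab$ and the target algebra is read off from the parities and the sign. Nothing in the hypotheses forces $ba$ to be a scalar multiple of $ab$, and the case $ba\notin Fab$ is where most of the work in the paper's proof lies. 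There one must quotient $R$ by the $T_G^*$-ideal $I=\langle ab+ba\rangle_{T_G^*}$ and \emph{prove} that $a$, $b$ and $ab$ survive in $R/I$; this is done by showing $I$ is spanned by $ab+ba$, $aba$, $bab$, $abab$, which in turn requires a nilpotency bound on the element $ab+ba$ (or $ab-ba$). That element is a symmetric or skew element of homogeneous degree $gh$, so the needed bound $(ab\pm ba)^2=0$ or $(ab\pm ba)^3=0$ comes from excluding $C_3^{\mathsf{gh}}, C_{3,*}^{\mathsf{gh}}$ when $|gh|>2$, $C_4^{\mathsf{gh}}, C_{4,*}^{\mathsf{gh}}$ when $|gh|=2$, and $C_{4,*}$ when $gh=1$. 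Your plan never invokes $C_4^{\mathsf{q}}$, $C_{4,*}^{\mathsf{q}}$ at all and assigns $C_{4,*}$ only the role of killing cubes of single elements of $(J_{11})_1^-$; this is precisely the missing idea, and without it the case $ba\notin Fab$ cannot be closed.

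A secondary inaccuracy: your preparatory reduction asserts that every symmetric or skew $b\in(J_{11})_g$ with $g\neq 1$ satisfies $b^2=0$. Lemma~\ref{73} gives this only for $|g|>2$ (via $C_3^{\mathsf{g}},C_{3,*}^{\mathsf{g}}\in\mathcal{I}_4$); for $|g|=2$ only $b^3=0$ is available, since $C_3^{\mathsf{g}}$ is not excluded for order-two $g$. This does not derail the argument for $(J_{11})_g(J_{11})_h$ with $g\neq h$, where one may instead pass to the quotient by $\langle a^2,b^2\rangle$ as the paper does, but as stated the claim is false and should not be used as a fact.
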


\begin{proof}

Since $F+(J_{11})_1$ is a $(G,*)$-subalgebra of $A$ with trivial $G$-grading and $U_{3,*}\notin \textnormal{var}^{\sharp}(A)$, by \cite[Lemma 4.7]{MN} we have $[(J_{11})_1^+,(J_{11})_1^+]=\{0\}.$

	Suppose that there exist symmetric or skew elements $a\in (J_{11})_g$ and $b\in (J_{11})_h$ such that $ab\neq 0$, for some $g,h\in G- \{1\}$. Let $R$ be the $(G,*)$-subalgebra of $A$ generated by $1_F,a$ and $b$ with induced $G$-grading and induced involution.

First, we consider the case $g=h$ with $|g|>2$. Since $C_{3}^\mathsf{g}, C_{3,*}^\mathsf{g}\notin \textnormal{var}^{\sharp}(A),$ by the previous lemma we have $a^2=b^2=0$. If both $a$ and $b$ are symmetric or both are skew we also have $(a+b)^2=0$ and so $ab=-ba$. Therefore, the map $\varphi_1: R \rightarrow \mathcal{G}_2$ given by $\varphi_1({1_F})= 1, \varphi_1({a})= e_1, \varphi_1({b})=e_2$ and $\varphi_1({ab})=e_1e_2$ defines an isomorphism of $(G,*)$-algebras in the following cases 

\begin{enumerate}
			\item[1.] if $a\in (J_{11})_g^+$, $b\in (J_{11})_g^+$ then $R\cong \mathcal{G}_{2,\psi}^{g,g};$ 
			\item[2.] if $a\in (J_{11})_g^-$, $b\in (J_{11})_g^-$ then $R\cong \mathcal{G}_{2,\tau}^{g,g}.$
		\end{enumerate}

  Now, suppose that $a$ is skew and $b$ is symmetric. If $ba\in Fab$, by Remark \ref{187} either $ba=-ab$ or $ba=ab$. In the first case, the map $\varphi_1: R\rightarrow \mathcal{G}_2$ defined above is an isomorphism of $(G,*)$-algebras between $R$ and $\mathcal{G}_{2,\gamma}^{g,g}.$ In the second case, the map $\varphi_2: R \rightarrow W$ given by $\varphi_2({1_F})= e_{11}+\cdots + e_{44}, \varphi_2({b})=e_{12}+e_{34}, \varphi_2({a})=e_{13}+e_{24}$ and $\varphi_2({ab})=e_{14}$ defines an isomorphism of $(G,*)$-algebras between $R$ and $W_{\nu_3}^{g,g}$. 
  On the other hand, if $ba\notin Fab$ we consider $I=\langle ab+ba \rangle_{T_G^*}$ the $T_G^*$-ideal of $R$ generated by $ab+ba$. Since $C_{3,*}^\mathsf{g}\notin \textnormal{var}^{\sharp}(A)$ then $I$ is linearly generated by $ab+ba, aba, bab$ and $abab$, and so $a,b$ and $ab$ do not belong to $I$. Therefore, $R/I$ is a $(G,*)$-algebra linearly generated by $\overline{1_F},\overline{a}, \overline{b}, \overline{ab}$ satisfying $\overline{a}^2=\overline{b}^2=\overline{ab}+\overline{ba}=0$. Hence $R/I\cong \mathcal{G}_{2, \gamma}^{g,g}.$

Now we consider the case  $g\neq h$. Taking the quotient of $R$ by the $T_G^*$-ideal generated by $a^2$ and $b^2$, without loss of generality we can assume $a^2=b^2=0$ such that $ab\neq 0$. Also, if $ba\in Fab$, then by Remark \ref{187} either $ab=ba$ or $ab=-ba$. In the first case, the map $\varphi_2: R \rightarrow W$ defined above defines an isomorphism of $(G,*)$-algebras in the following cases 
		
		\begin{enumerate}
			\item[1.] if $a\in (J_{11})_g^+$ and $b\in (J_{11})_h^+$ then $R\cong W_{\nu_1}^{g,h};$ 
  
			\item[2.] if $a\in (J_{11})_g^-$ and $b\in (J_{11})_h^-$ then $R\cong W_{\nu_2}^{g,h};$

			\item[3.] if $a\in (J_{11})_g^+$ and $b\in (J_{11})_h^-$ then $R\cong W_{\nu_3}^{g,h}.$
		\end{enumerate}

In the second case, the map $\varphi_1: R \rightarrow \mathcal{G}_2$ defined above defines an isomorphism of $(G,*)$-algebras in the following cases

\begin{enumerate}
			\item[1.] if $a\in (J_{11})_g^+$ and $b\in (J_{11})_h^+$ then $R\cong \mathcal{G}_{2,\psi}^{g,h};$ 
 
			\item[2.] if $a\in (J_{11})_g^-$ and $b\in (J_{11})_h^-$  then $R\cong \mathcal{G}_{2,\tau}^{g,h};$
 
			\item[3.] if $a\in (J_{11})_g^-$ and $b\in (J_{11})_h^+$ then $R\cong \mathcal{G}_{2,\gamma}^{g,h};$
  
		\end{enumerate}

  Therefore, we may assume that $ba\notin Fab$. In this case, we consider $I$ the $T_G^*$-ideal of $R$ generated by $ab+ba$.  Suppose that $|gh|=1$, that is, $h=g^{-1}$. If $a$ is symmetric and $b$ is skew (resp. both $a$ and $b$ are either symmetric or skew) then, since $C_{4,*}\notin \textnormal{var}^{\sharp}(A)$ we have $(ab+ba)^3=0$ (resp. $(ab-ba)^3=0$) and so $a,b,ab\notin I$. In this case, $R/I \cong \mathcal{G}_{2,*}^{g,g^{-1}}$ and so $\mathcal{G}_{2,*}^{g,g^{-1}}\in \textnormal{var}^{\sharp}(A)$, for some $*\in \{\psi, \tau, \gamma\}.$ 
  
  Now suppose $|gh|>2$ (resp. $|gh|=2$). In this case, we have $C_3^{gh},C_{3,*}^{gh}\notin \textnormal{var}^{\sharp}(A)$ (resp. $C_4^{gh},C_{4,*}^{gh}\notin \textnormal{var}^{\sharp}(A))$ and so $(ab+ba)^2=0$ (resp. $(ab+ba)^3=0$). In both case we can check that $a,b,ab\notin I.$ Thus, $R/I$ is a $(G,*)$-algebra linearly generated by $\overline{1_F},\overline{a},\overline{b}$ and $\overline{ab}$ satisfying  $\overline{a}^2=\overline{b}^2=\overline{ab}+\overline{ba}=0$. In this situation we have $R/I\cong \mathcal{G}_{2,*}^{g,h}$ and so $ \mathcal{G}_{2,*}^{g,h}\in \textnormal{var}^{\sharp}(A)$, for some $*\in \{\psi, \tau, \gamma\}.$

  Finally, using the previous ideas we may check that  $(J_{11})_1^- (J_{11})_h=\{0\}$,  $h\in G$ with $|h|>2,$ since $C_3^h, C_{3,*}^{h},W_\jmath^{1,h}, \mathcal{G}_{2,\diamond}^{1,h}\notin \textnormal{var}^{\sharp}(A)$, for all $\jmath\in \{\nu_2, \nu_3\}$, $\diamond \in \{\tau, \gamma\}$.
 \end{proof}

	\begin{lemma} \label{306}
		Let $A=F+J_{11}$ be a $(G,*)$-algebra such that $Q\notin \textnormal{var}^{\sharp}(A)$, for all $Q\in \mathcal{I}_4$. Then, $A\sim_{T_G^*} B$ where 
\begin{equation}\label{DecompSpecial}B=  \left({\bigoplus_{|g|\leq  2} } (F+(J_{11})_1+(J_{11})_g)\right)\oplus \left({\bigoplus_{|h|>2} } (F +(J_{11})_1^+ +(J_{11})_h\right).
  \end{equation}
		
	\end{lemma}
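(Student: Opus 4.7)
The plan is to exhibit $B$ as an external direct sum of certain $(G,*)$-subalgebras of $A$ and then show that $A$ and $B$ satisfy the same multilinear $(G,*)$-identities. First, I would verify that each summand is a $(G,*)$-subalgebra of $A$. For $|g|\leq 2$, the subspace $A_g:=F+(J_{11})_1+(J_{11})_g$ is $*$-invariant, and it is multiplicatively closed because $(J_{11})_1$ is a subalgebra, $(J_{11})_1(J_{11})_g\subseteq (J_{11})_g$, and $(J_{11})_g(J_{11})_g\subseteq (J_{11})_{g^2}=(J_{11})_1$ when $|g|=2$. For $|h|>2$, the subspace $A_h:=F+(J_{11})_1^++(J_{11})_h$ is $*$-invariant and closed: Lemma~\ref{10} gives $(J_{11})_h(J_{11})_h=0$, the relation $[(J_{11})_1^+,(J_{11})_1^+]=0$ forces $(J_{11})_1^+(J_{11})_1^+\subseteq (J_{11})_1^+$, and $(J_{11})_1^+(J_{11})_h\subseteq (J_{11})_h$. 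Since each summand of $B$ is a $(G,*)$-subalgebra of $A$, every $(G,*)$-identity of $A$ is an identity of each summand and hence of $B$, giving $\textnormal{Id}^{\sharp}(A)\subseteq \textnormal{Id}^{\sharp}(B)$.

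For the reverse inclusion, I would take a multilinear $f\in P_n^{\sharp}\cap \textnormal{Id}^{\sharp}(B)$ and an arbitrary evaluation $x_{i,g_i}^{\epsilon_i}\mapsto a_i\in A_{g_i}^{\epsilon_i}$. Writing $a_i=\alpha_i 1_F+b_i$ with $\alpha_i\in F$ (necessarily zero unless $g_i=1$ and $\epsilon_i=+$) and $b_i\in (J_{11})_{g_i}^{\epsilon_i}$, multilinearity expands $f(a_1,\dots,a_n)$ into a sum of evaluations in which each variable is replaced either by $\alpha_i 1_F$ or by $b_i$. Because $1_F$ acts as the unit, each such term is the evaluation of a shorter multilinear polynomial (of the same type) with all variables substituted in $J_{11}$, and this shorter polynomial remains an identity of $B$. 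It therefore suffices to prove that $f(b_1,\dots,b_n)=0$ whenever every $b_i\in (J_{11})_{g_i}^{\epsilon_i}$.

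The core step is a case analysis on the grades and parities $(g_i,\epsilon_i)$, driven entirely by Lemma~\ref{10}. For each monomial $b_{\sigma(1)}\cdots b_{\sigma(n)}$ I would track the degree of the running product $b_{\sigma(1)}\cdots b_{\sigma(k)}$ and apply the following: if at some step the running product has degree $u\neq 1$ and the next factor has degree $v\neq 1$ with $u\neq v$, then $(J_{11})_u(J_{11})_v=0$ kills the monomial; if $|h|>2$ and two factors of degree $h$ appear, the relation $(J_{11})_h(J_{11})_h=0$ kills it; and if $|h|>2$ and some skew factor of degree $1$ lies on either side of the unique degree-$h$ factor, the relation $(J_{11})_1^-(J_{11})_h=0$ together with its $*$-transposed version $(J_{11})_h(J_{11})_1^-=0$ does the same. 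Consequently, the only monomials that can survive are those whose factors $b_i$ all lie in a single subalgebra of the form $A_g$ ($|g|\leq 2$) or $A_h$ ($|h|>2$); in either situation the full evaluation of $f$ lies inside a summand of $B$ and therefore vanishes because $f\in \textnormal{Id}^{\sharp}(B)$.

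The main obstacle is making this case analysis rigorous and exhaustive, particularly the careful bookkeeping of running degrees combined with symmetric/skew parity in the $|h|>2$ regime. Once it is settled, the two inclusions together yield $\textnormal{Id}^{\sharp}(A)=\textnormal{Id}^{\sharp}(B)$, whence $A\sim_{T_G^*}B$ as desired.
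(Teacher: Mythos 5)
Your proposal is correct and follows essentially the same route as the paper: the same decomposition into the subalgebras $F+(J_{11})_1+(J_{11})_g$ ($|g|\leq 2$) and $F+(J_{11})_1^++(J_{11})_h$ ($|h|>2$), the same reliance on Lemma~\ref{10}, and the same two inclusions. The only real difference is organizational: the paper works at the level of polynomials, reducing $f$ modulo the $T_G^*$-ideal generated by $x_{1,g}x_{2,q}$, $x_{1,h}x_{2,h}$, $x_{1,1}^-x_{2,h}$, $[x_{1,1}^+,x_{2,1}^+]$ and then observing that the surviving multihomogeneous types can only be evaluated inside a single summand of $B$, whereas you evaluate first and kill mixed monomials one by one; these are the same argument. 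One concrete repair is needed in your bookkeeping, which you yourself flag as the delicate point: tracking the degree of the left-to-right running product fails on monomials such as $b_gb_gb_h$ with $|g|=|h|=2$ and $g\neq h$, since the running product returns to degree $1$ before meeting $b_h$ and your rule never fires. The fix is to regroup by associativity: choose two factors of distinct nontrivial degrees $u\neq v$ that are consecutive among the factors of nontrivial degree, absorb the intervening degree-$1$ factors into the left segment (which then lies in $(J_{11})_u$), and apply $(J_{11})_u(J_{11})_v=\{0\}$; with this adjustment your dichotomy (either all substituted elements lie in one summand, or every monomial vanishes) holds and the proof closes as you describe.
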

	
	\begin{proof}
	First, notice that $F+(J_{11})_1+(J_{11})_g$ is a subalgebra of $A$, for all $g\in G$ with $|g|\leq 2$.
        
    By Lemma \ref{10}, the polynomials $x_{1,g}x_{2,q},\, x_{1,h}x_{2,h}$, $x_{1,1}^-x_{2,h}$ and $[{x_{1,1}^+},x_{2,1}^+]$ are $(G,*)$-identities of $A$, for all $g,q,h\in G- \{1\}$ with $g\neq q$, $|h|>2$ and $x_i\in \{x_i^+,x_i^-\}.$ Therefore, $F +(J_{11})_1^+ +(J_{11})_h$ is also a $(G,*)$-subalgebra of $A$, for all $h\in G$ with $|h|>2$. Hence, $\textnormal{Id}^{\sharp}(A)\subseteq \textnormal{Id}^{\sharp}(B)$. Now, consider $f\in P_n^{\sharp}\cap \textnormal{Id}^{\sharp}(B)$. Reducing $f$ modulo the $\textnormal{T}_G^*$-ideal
 $$\langle x_{1,g}x_{2,q},\, x_{1,h}x_{2,h}, x_{1,1}^-x_{2,h},[{x_{1,1}^+},x_{2,1}^+]  \mid g,q,h\in G- \{1\},g\neq q,|h|>2\rangle_{T_G^*} $$ and using the multihomogeneity of $\textnormal{T}_G^*$-ideals we may rewrite $f$ as a linear combination of polynomials of one of the following types
		\begin{enumerate}
			\item[1.] $x_{i_1,1}^+\cdots x_{i_{j-1},1}^+x_{i_j,h} x_{i_{j+1},1}^+\cdots x_{i_{n},1}^+$;
			\item[2.] $x_{\sigma(1),g_{i_1}}\cdots x_{\sigma(n),g_{i_n}},$ where $g_{i_t}\in \{1,g\}$;
		\end{enumerate} for some $g,h\in G$ with $|g|\leq 2$, $|h|>2$, $x_i\in\{x_i^+,x_i^-\}$, $\sigma \in S_n$ and $i_1< \cdots < i_{j-1}, i_{j+1}< \cdots < i_{n}. $ 
		
		In the first case, we have to evaluate $f$ on elements of $F + (J_{11})_1^+ + (J_{11})_{h}$. Since $f$ is a $(G,*)$-identity of $B$ we conclude that $f\equiv 0$ on $A$. 
		In the second case, we must evaluate $f$ on the subalgebra $F+(J_{11})_1+(J_{11})_g$ of $A$. Since $f$ is an identity of $F+(J_{11})_1+(J_{11})_g$ we have $f\equiv 0$ on $A$. Therefore, $\textnormal{Id}^\sharp({A})= \textnormal{Id}^\sharp({B})$.
	\end{proof}

The previous lemma will be fundamental in the proof of our main result. In fact, the decomposition (\ref{DecompSpecial}) seems to be much simpler and more valuable for our goal, since $F+(J_{11})_1+(J_{11})_g)$, $|g|\leq 2$, is a $(G,*)$-algebra with induced $\mathbb{Z}_2$-grading and $F +(J_{11})_1^+ +(J_{11})_r$, $|r|>2$, satisfies the identities $x_{1,r}x_{2,r}$ and $x_{1,1}^-.$

In the meantime, we assume that $|G|$ is even and for any element $g\in G$ with $|g|=2$ we consider the set of $(G,*)$-algebras below endowed with their respective induced $\mathbb{Z}_2$-grading $$\mathcal{I}_2=\{A_{2}^*,N_{3}^*, U_{3}^*,A_2^\mathsf{g} , N_3^\mathsf{g} , U_3^\mathsf{g}, C_{4,*}, C_{4}^\mathsf{g} , C_{4,*}^\mathsf{g}, C_{2,*}\oplus C_2^\mathsf{g}\oplus C_{2,*}^\mathsf{g}, \mathcal{G}_{3,\tau}^{1,1}, \mathcal{G}_{2,\imath}^{1,g},\mathcal{G}_{2,\diamond}^{g,g}, \mathcal{G}_{2,\tau}^{1,1}\oplus C_{3,*}$$ $$
		\mathcal{G}_{2,\tau}^{1,1} \oplus K_1,C_{3,*}\oplus K_1 ,C_{3,*}^\mathsf{g}\oplus K_2, C_3^\mathsf{g}\oplus K_3, W_{\nu_3}^{g,g}, W_{\jmath}^{1,g} \},$$ where $ \diamond \in \{\tau, \psi, \gamma\}, \imath\in \{\tau, \gamma\}, \jmath \in \{\nu_2,\nu_3\}, K_i\in \mathcal{D}^g,K_1\neq C_{2,*}, K_2\neq C_{2,*}^\mathsf{g}$ and $K_3\neq C_2^\mathsf{g}$.
	
We emphasize that the algebras in $\mathcal{I}_2$ were considered by do Nascimento and Vieira \cite[Theorem $8.8$]{NV} in the classification of $(\mathbb{Z}_2,*)$-varieties whose $n$th $(\mathbb{Z}_2,*)$-colengths are bounded by $3$. As a consequence, we have the following result.

\begin{theorem} \label{338}  Let $A$ be a finite-dimensional $(G,*)$-algebra over a field $F$ of characteristic zero with induced $\mathbb{Z}_2$-grading by an element $g\in G$. If $Q\notin \textnormal{var}^{\sharp}(A)$, for all $Q\in \mathcal{I}_2$, then $A$ is $T_{G}^*$-equivalent to one of the following $(\mathbb{Z}_2,*)$-algebras: $N, C\oplus N, \mathcal{G}_{2,\tau} \oplus N,  D_1\oplus N, D_1\oplus D_2\oplus N, D_3\oplus N ,$ where    $D_1,D_2\in \mathcal{D}^g$ with $D_1\neq D_2$, $D_3\in \{C_{3,*}, C_3^\mathsf{g}, C_{3,*}^\mathsf{g}\}$, $N$ is a nilpotent $*$-superalgebra and $C$ is a commutative non-nilpotent algebra with trivial involution and trivial grading.
	
	\end{theorem}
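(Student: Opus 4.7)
The plan is to reduce the theorem to \cite[Theorem 8.8]{NV}, which provides the analogous classification in the $(\mathbb{Z}_2,*)$ setting for varieties of colength at most $3$. The key observation is that since the $G$-grading on $A$ is induced by the cyclic subgroup $H=\langle g\rangle$ of order $2$, one has $\supp(A)\subseteq H$; the components $A_r$ vanish for every $r\in G\setminus H$, so every $(G,*)$-identity of $A$ is determined by its restriction to variables of homogeneous degree in $\{1,g\}$. Hence $A$ may be regarded intrinsically as a $(\mathbb{Z}_2,*)$-algebra, and for any $(G,*)$-algebra $Q$ whose support lies in $H$, membership of $Q$ in $\var^\sharp(A)$ is equivalent to the corresponding containment in the $(\mathbb{Z}_2,*)$-variety generated by $A$.

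With this reduction in place, I would verify that the set $\mathcal{I}_2$ listed above is precisely the list of forbidden $(\mathbb{Z}_2,*)$-algebras appearing in \cite[Theorem 8.8]{NV}. Every element of $\mathcal{I}_2$ is already presented with its $\mathbb{Z}_2$-grading induced from $g$ and with its involution as chosen in \cite{NV}, so this amounts to a direct comparison of definitions and of the $T_G^*$-ideals computed in the lemmas of Section 4. Once this dictionary is established, the hypothesis of the present theorem transports verbatim to that of \cite[Theorem 8.8]{NV}.

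Applying that result to $A$ viewed as a $(\mathbb{Z}_2,*)$-algebra yields $T_{\mathbb{Z}_2}^*$-equivalence between $A$ and one of the six forms listed in the statement. Since all algebras appearing in the output have support contained in $H$, the $T_{\mathbb{Z}_2}^*$-equivalence automatically lifts to a $T_G^*$-equivalence, giving the conclusion. The main obstacle is the bookkeeping involved in the second step: one must check, case by case, that the algebras in $\mathcal{I}_2$ exhaust the forbidden list of \cite{NV}, so that no ``bad" $(\mathbb{Z}_2,*)$-algebra slips into $\var^\sharp(A)$ under our hypothesis. This is a routine verification that relies on no new identity computations beyond those already established in the lemmas of Section 4.
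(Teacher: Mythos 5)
Your proposal is correct and matches the paper's approach: the paper states this theorem without proof as a direct consequence of \cite[Theorem 8.8]{NV}, precisely via the identification of $A$ (whose support lies in $\langle g\rangle$) with a $(\mathbb{Z}_2,*)$-algebra and of $\mathcal{I}_2$ with the forbidden list from that reference. Your write-up simply makes explicit the transport of varieties and $T$-ideals between the $(G,*)$ and $(\mathbb{Z}_2,*)$ settings, which the paper leaves implicit.
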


    For all $S_1\in \mathcal{D}- \{C_{2}^\mathsf{h}\},\, S_2\in \mathcal{D}- \{C_{2,*}^\mathsf{h}\},\, T\in \mathcal{D}-\{C_{2,*}\}$ and $ P\in \mathcal{D}_2$ we consider the sets
	$$\mathcal{I}_5=\{C_3^{\mathsf{h}}\oplus S_1, C_{3,*}^\mathsf{h}\oplus S_2, C_{3,*}\oplus T, C_{3,*}\oplus \mathcal{G}_{2,\tau}^{1,1}, \mathcal{G}_{2,\tau}^{1,1} \oplus T, P \mid |h|=2 \}\mbox{ and }\mathcal{I}_6=\{N_3^\mathsf{h},U_3^\mathsf{h}\mid |h|>2\}.$$ Define 
\begin{equation}\label{ConjuntoI}
\mathcal{I}=\displaystyle \bigcup_{k=2}^6 \mathcal{I}_k.
\end{equation}
	
	\begin{theorem} \label{157} Let $G$ be a finite abelian group of even order and $A$ be a finite-dimensional $(G,*)$-algebra. The following conditions are equivalent
		\begin{enumerate}
			\item[1)] $Q\notin \textnormal{var}^{\sharp}(A)$, for all $Q\in \mathcal{I}.$
			\item[2)] $A$ is $T_G^*$-equivalent to one of the following $(G,*)$-algebras: $N,$ $ C\oplus N$, $D\oplus N$, $\mathcal{G}_{2,\tau}^{1,1} \oplus N$,  $C_{3,*}\oplus N$, $C_{3}^\mathsf{h}\oplus N$ or $C_{3,*}^\mathsf{h}\oplus N$, for some $D\in \mathcal{D}\cup \mathcal{D}_1$ and $h\in G$ with $|h|=2$.
			
			\item[3)] $l_{n}^{\sharp}(A)\leq 3,$ for $n$ large enough.
		\end{enumerate}
	\end{theorem}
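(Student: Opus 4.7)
The plan is to establish the cycle $(2)\Rightarrow (3)\Rightarrow (1)\Rightarrow (2)$. For $(2)\Rightarrow (3)$, each of the seven building blocks appearing in the list of (2) has colength at most $3$ by the explicit computations in the preceding sections (see in particular the remark after Lemma~\ref{52} and the cited results from \cite{MN, NV}), and adjoining a nilpotent $(G,*)$-algebra $N$ does not change the colength for $n$ sufficiently large. For $(3)\Rightarrow(1)$, every algebra in $\mathcal{I}$ has colength at least $4$ for $n$ large by the computations throughout Sections~4 and~5 together with the referenced results; hence if some $Q\in\mathcal{I}$ belonged to $\textnormal{var}^\sharp(A)$, Remark~\ref{obs}(1) would force $l_n^\sharp(A)\geq l_n^\sharp(Q)\geq 4$, contradicting $(3)$.

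The heart of the argument is $(1)\Rightarrow(2)$. The first task is to show $A$ has polynomial growth: the exclusions in $\mathcal{I}$---in particular $C_{4,*}$, $C_{4,*}^{\mathsf{h}}$, $C_4^{\mathsf{h}}$ for $|h|=2$, $C_3^{\mathsf{g}}$ for $|g|>2$, together with $A_2^*$, $N_3^*$, $U_3^*$, $A_2^{\mathsf{g}}$ for $|g|=2$, and $N_3^{\mathsf{h}}$, $U_3^{\mathsf{h}}$ for $|h|>2$---combined with Remark~\ref{12} force $(FC_2)_*, (FC_2)^\sharp, FC_p, M_{g,\rho}\notin\textnormal{var}^\sharp(A)$ for every prime $p$ dividing $|G|$ and every $g\in G$. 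Theorem~\ref{1} then yields $A\sim_{T_G^*} B_1\oplus\cdots\oplus B_m$ with each $B_i$ nilpotent or of the form $F+J(B_i)$. Since $\mathcal{I}_3\subseteq\mathcal{I}$, Lemma~\ref{3} applied to each non-nilpotent $B_i$ gives $B_i=(F+J_{11}(B_i))\oplus J_{00}(B_i)$; absorbing every $J_{00}$ part into a single nilpotent summand $N$, I may assume $A\sim_{T_G^*} C\oplus N$ with $C$ a direct sum of $(G,*)$-algebras of type $F+J_{11}$.

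Next, since $\mathcal{I}_4\subseteq\mathcal{I}$, Lemma~\ref{306} applies to each factor of $C$, further splitting it into local components of the form $F+(J_{11})_1+(J_{11})_g$ with $|g|\leq 2$ and components $F+(J_{11})_1^+ +(J_{11})_h$ with $|h|>2$. Each $|g|\leq 2$ component inherits a $\mathbb{Z}_2$-grading and, thanks to $\mathcal{I}_2\subseteq\mathcal{I}$, satisfies the hypothesis of Theorem~\ref{338}, so it is $T_G^*$-equivalent to one of the six $(\mathbb{Z}_2,*)$-forms listed there. For each $|h|>2$ component, Lemmas~\ref{73} and~\ref{10} yield $b^2=0$ for every $b\in(J_{11})_h^\pm$ and $(J_{11})_h(J_{11})_h=0$; combined with the exclusion of $N_3^{\mathsf{h}}, U_3^{\mathsf{h}}\in\mathcal{I}_6$ and a direct embedding argument (analogous to the proof of Lemma~\ref{3}), one shows the component is $T_G^*$-equivalent to a commutative algebra (contributing to a $C$-type summand in the final list), to an algebra in $\mathcal{D}^h$ (contributing to a $D$-type summand), or to a nilpotent algebra (absorbed into $N$).

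Finally, I would glue the local reductions using the $\mathcal{I}_5$ exclusion. This forbids every triple sum $P\in\mathcal{D}_2$, and the mixed pairs $C_{3,*}\oplus T$, $\mathcal{G}_{2,\tau}^{1,1}\oplus T$, $C_3^{\mathsf{h}}\oplus S_1$, $C_{3,*}^{\mathsf{h}}\oplus S_2$, $C_{3,*}\oplus\mathcal{G}_{2,\tau}^{1,1}$; a case analysis over the local forms produced by Theorem~\ref{338} then shows that the only surviving combinations of factors collapse (modulo $\sim_{T_G^*}$ and absorption of nilpotent parts into $N$) to exactly one of the seven types in (2). The principal obstacle is this combinatorial gluing step: one must verify, over all admissible choices of $h\in G$ with $|h|=2$ and over all local shapes that Theorem~\ref{338} produces, that the $\mathcal{I}_5$ exclusion leaves no direct sum outside the listed seven types, and in particular must rule out the possibility that two different $(G,*)$-structures carried by distinct local summands combine into a form not covered by (2).
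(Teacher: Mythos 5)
Your proposal follows the paper's proof almost step for step: the same cycle of implications, the same reduction to polynomial growth via Remark~\ref{12} and Theorem~\ref{1}, then Lemma~\ref{3} to kill $J_{10},J_{01}$, Lemma~\ref{306} to split $F+J_{11}$ into the $|g|\leq 2$ and $|h|>2$ components, Theorem~\ref{338} for the $\mathbb{Z}_2$-graded pieces, and the $\mathcal{I}_5$ exclusions for the final gluing. The one place you genuinely diverge is the treatment of the components $F+(J_{11})_1^+ +(J_{11})_h$ with $|h|>2$. The paper does not run a ``direct embedding argument analogous to Lemma~\ref{3}'': it observes that Lemma~\ref{10} forces $x_{1,h}x_{2,h}\equiv 0$ and $x_{1,1}^-\equiv 0$ on this component, so the component lies in $\textnormal{var}^{\sharp}(M_{h,\rho})$, and then invokes the full classification of subvarieties of $\textnormal{var}^{\sharp}(M_{h,\rho})$ (Theorem~\ref{31}) together with the exclusion of $N_3^{\mathsf{h}}$, $U_3^{\mathsf{h}}$ and $A_2^{\mathsf{h}}$ to conclude that the component is $T_G^*$-equivalent to $N$, $C\oplus N$, $C_2^{\mathsf{h}}\oplus N$, $C_{2,*}^{\mathsf{h}}\oplus N$ or $C_2^{\mathsf{h}}\oplus C_{2,*}^{\mathsf{h}}\oplus N$. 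Your sketch would have to reprove a nontrivial part of that classification from scratch, and as stated it also omits the fifth outcome $C_2^{\mathsf{h}}\oplus C_{2,*}^{\mathsf{h}}\oplus N$ (a sum of two distinct members of $\mathcal{D}^h$, which is exactly what feeds the $D\in\mathcal{D}_1$ case of item~2). You should replace the embedding heuristic at that point by the appeal to $\textnormal{var}^{\sharp}(M_{h,\rho})$ and Theorem~\ref{31}; the rest of your argument, including your (correct) identification of the $\mathcal{I}_5$ case analysis as the remaining bookkeeping, matches the paper.
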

	
	\begin{proof} Since any $(G,*)$-algebra in $\mathcal{I}$ has $(G,*)$-colength at least $4$ then  item $3)$ implies $1)$. Moreover, we have already notice that item $2)$ implies $3)$.

   In order to prove that  item  $1)$ implies   $2)$, we suppose that for all $Q\in\mathcal{I}$ we have $Q\notin \textnormal{var}^{\sharp}(A)$. By Remark \ref{12} we have $(FC_2)_{*}, \,(FC_2)^{\sharp}$, $FC_p$, $M_{g,\rho}$ $\notin \textnormal{var}^{\sharp}(A)$,  for all prime $p$ dividing $|G|$ and for all $g \in G$. Hence, by Theorem \ref{1}, $A$ has polynomial growth and so, by Theorem \ref{1} we have $A \sim_{T_G^*} B_1 \oplus \cdots \oplus B_m$, where $B_i$ is a finite-dimensional $(G, *)$-algebra over $F$ with either $B_i$ is nilpotent or $B_i\cong F+J(B_i)$, for $i=1,\ldots, m.$ If $B_i$ is nilpotent for all $i$ then $A\sim_{T_G^*}N$, where $N$ denotes a nilpotent \Gstar algebra. Therefore, we may assume that there exists $1\leq i\leq m$ such that $B_i=F+J(B_i),$ where $J(B_i)= J_{00}+J_{10}+J_{01}+J_{11}$ as given in (\ref{6}).
		
		Since $Q\notin \textnormal{var}^{\sharp}(B_i)$, for all $Q\in \mathcal{I}_3$, by Lemma \ref{3} we have $J_{10}=J_{01}=\{0\}$. Thus, we conclude that $B_i=B\oplus J_{00}$, where $B=F+J_{11}$. As long as $Q\notin \textnormal{var}^{\sharp}(B)$, for all $Q\in \mathcal{I}_4$, by Lemma \ref{306} we may assume $$B=\left( \underset{|g|=2}{\bigoplus } (F+(J_{11})_1+(J_{11})_g)\right)\oplus \left(\underset{|h|>2}{\bigoplus }{ (F +(J_{11})_1^+ +(J_{11})_h)}\right).$$
		
		Consider $g\in G$ with $|g|=2$. In this case, $F+(J_{11})_1+(J_{11})_g$ can be seen as a $(G,*)$-algebra with induced $\mathbb{Z}_2$-grading. Since $Q\notin \textnormal{var}^{\sharp}(B)$, for all $Q\in \mathcal{I}_2$, it follows from Theorem \ref{338} that $F+(J_{11})_1+(J_{11})_g$ is $T_G^*$-equivalent to one of the following $(G,*)$-algebras:
		\begin{center}
			$N,\, C\oplus N, C_{3,*}\oplus N , C_{3}^\mathsf{g} \oplus N, C_{3,*}^\mathsf{g}\oplus N$, $\mathcal{G}_{2,\tau}^{1,1} \oplus N, D_1\oplus N, D_1\oplus D_2\oplus  N$, \end{center} where $D_i\in \mathcal{D}^g, i\in \{1,2\},$ with $D_1\neq D_2.$ 
		
		Recall that if $h\in G$ with $|h|>2$ then, by Lemma \ref{10}, we have $x_{1,h}x_{2,h}\equiv 0$ on $B$, for all $x_{i,h}\in \{x_{i,h}^+,x_{i,h}^-\}$. Hence, $F +(J_{11})_1^+ +(J_{11})_h \in \textnormal{var}^{\sharp}(M_{h,\rho})$. Since $N_{3}^{h}$, $U_{3}^{h}$, $A_{2}^{h}\notin \textnormal{var}^{\sharp}(B)$, it follows from Theorem \ref{31} that $F +(J_{11})_1^+ +(J_{11})_h$ is $T_G^*$-equivalent to:
		$$N, C\oplus N, C_{2}^{\mathsf{h}}\oplus N, C_{2,*}^{\mathsf{h}}\oplus N\mbox{ or } C_{2}^{\mathsf{h}}\oplus C_{2,*}^{\mathsf{h}}\oplus N.$$ 
		
		Finally, we may construct the algebra $B$ by considering the direct sum of the previous $(G,*)$-algebras. Recalling that $Q\notin \textnormal{var}^{\sharp}(B)$, for all $Q\in \mathcal{I}_5$, then $B$ is $T_G^*$-equivalent to one of the following \Gstar algebras: 
		$$N,\, C\oplus N,C_{3,*}\oplus N, \mathcal{G}_{2,\tau}^{1,1} \oplus N, D_1\oplus N,D_1\oplus D_2\oplus N,C_{3}^\mathsf{g}\oplus N, C_{3,*}^\mathsf{g}\oplus N  $$ for some $g\in G$ with $|g|=2$ and $D_i\in \mathcal{D}, i\in \{1,2\}$, with $D_1\neq D_2.$ Since $A$ is $T_G^*$-equivalent to a direct sum of $(G,*)$-algebras of the type $B$ with nilpotent $(G,*)$-algebras then the item $2)$ follows.
	\end{proof}

 Now, we consider the subset $\mathcal{P}$ of $\mathcal{I}$ consisting of the $(G,*)$-algebras having $n$th $(G,*)$-colength $4$, for $n$ large enough, that is, 
$$\mathcal{P}= \{ \mathcal{G}_{3,\tau}^{1,1}, \mathcal{G}_{2,\imath_1}^{1,g}, \mathcal{G}_{2,\imath}^{h,h}, \mathcal{G}_{2,\imath_2}^{p,p}, \mathcal{G}_{2,\gamma}^{p,p^{-1}}, C_{4}^{\mathsf{h}},C_{4,*}^{\mathsf{h}}, C_{4,*}, C_3^{\mathsf{p}},  C_{3,*}^{\mathsf{p}}, S_1 , C_{3,*}\oplus K, \mathcal{G}_{2,\tau}^{1,1}\oplus K, \mathcal{G}_{2,\tau}^{1,1}\oplus C_{3,*}, $$ $$C_3^{\mathsf{h}}\oplus S_2, C_{3,*}^{\mathsf{h}}\oplus S_3, U_3^*,  N_3^*, W_{\nu_i}^{p,p^{-1}}, W_{\nu_j}^{1,p},  W_{\nu_3}^{h,h} \},$$ where $g,h,p\in G- \{1\} , |h|=2, |p|>2$, $\imath = \{\tau, \gamma, \psi\}$, $\imath_1\in \imath - \{\psi\},\imath_2\in \imath - \{\gamma\}$, $S_1\in \mathcal{D}_2, $ $K\in \mathcal{D}- \{C_{2,*}\}$, $S_2\in \mathcal{D}- \{C_2^{\mathsf{h}}\}$, $S_3\in \mathcal{D}- \{C_{2,*}^{\mathsf{h}}\}$, $i\in \{1,2,3\}$ and $j=\{2,3\}$.

As a consequence of the previous theorem, we can present the classification of the $(G,*)$-varieties of $(G,*)$-colength equal to $4.$

 \begin{corollary}\label{coro62}
Let $G$ be a finite abelian group of even order and $A$ be a finite-dimensional $(G,*)$-algebra. Then
		$l_{n}^{\sharp}(A)= 4,$ for $n$ large enough, if and only if $A\sim_{T_G^*} P\oplus N$, for some $P\in \mathcal{P}.$
 \end{corollary}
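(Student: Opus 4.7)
The plan is to prove the corollary in two directions, leaning heavily on Theorem \ref{157} and the structural lemmas developed for it.

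For the sufficient direction, I would suppose $A \sim_{T_G^*} P \oplus N$ with $P \in \mathcal{P}$ and $N$ nilpotent of class $k$. Since every multilinear $(G,*)$-polynomial of degree $n \geq k$ vanishes on $N$, it follows that $P_n^{\sharp} \cap \IdGstar{A} = P_n^{\sharp} \cap \IdGstar{P}$ for all $n \geq k$. Hence $P_n^{\sharp}(A) \cong P_n^{\sharp}(P)$ as $S_{\gen{n}}$-modules, giving $l_n^{\sharp}(A) = l_n^{\sharp}(P) = 4$ for $n$ large enough.

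For the necessary direction, assume $l_n^{\sharp}(A) = 4$ for $n$ large. The boundedness of the colength sequence together with Theorem \ref{890} forces $A$ to have polynomial growth, so by Theorem \ref{1} I may write $A \sim_{T_G^*} B_1 \oplus \cdots \oplus B_m$ with each $B_i$ nilpotent or of the form $F + J(B_i)$. Since $l_n^{\sharp}(A) > 3$, condition (3) of Theorem \ref{157} fails, hence condition (1) fails, so some $Q \in \mathcal{I}$ lies in $\textnormal{var}^{\sharp}(A)$. By Remark \ref{obs}(1), $l_n^{\sharp}(Q) \leq l_n^{\sharp}(A) = 4$ for large $n$, and since every algebra in $\mathcal{I}$ has asymptotic $(G,*)$-colength at least $4$, this forces $l_n^{\sharp}(Q) = 4$, so $Q \in \mathcal{P}$. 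Moreover, every algebra in $\mathcal{I} \setminus \mathcal{P}$ has asymptotic colength at least $5$ and thus cannot belong to $\textnormal{var}^{\sharp}(A)$.

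To upgrade this to $A \sim_{T_G^*} P \oplus N$, I would apply the structural machinery from the proof of Theorem \ref{157} to each non-nilpotent summand $B_i = F + J(B_i)$. Lemma \ref{3} yields $J_{10} = J_{01} = \{0\}$, so $B_i = (F + J_{11}) \oplus J_{00}$ with $J_{00}$ nilpotent. Lemma \ref{306} then splits $F + J_{11}$ into components indexed by the elements of $G$ of order at most $2$ and of order larger than $2$; Theorem \ref{338}, Theorem \ref{31}, and Lemma \ref{73} identify each component, up to $T_G^*$-equivalence, either as one of the low-colength algebras appearing in Theorem \ref{157}(2) or as some member of $\mathcal{P}$. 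The exclusion of every algebra in $\mathcal{I} \setminus \mathcal{P}$ from $\textnormal{var}^{\sharp}(A)$ then forbids the direct-sum combinations of forbidden type in $\mathcal{I}_5 \cup \mathcal{I}_6$ that would otherwise push the colength above $4$, so exactly one summand is $T_G^*$-equivalent to some $P \in \mathcal{P}$, and the remaining pieces collapse into a nilpotent $N$. I expect the main obstacle to be precisely this final consolidation step: a careful bookkeeping argument ensuring that the various decompositions of the $B_i$'s aggregate to the single-$\mathcal{P}$-summand form $P \oplus N$, paralleling the closing argument of the proof of Theorem \ref{157}.
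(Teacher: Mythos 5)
Your sufficiency direction and the first half of your necessity direction are fine and match the paper: from $l_n^{\sharp}(A)=4>3$ you correctly negate condition 3) of Theorem \ref{157} to find some $Q\in\mathcal{I}$ inside $\textnormal{var}^{\sharp}(A)$, and the colength comparison of Remark \ref{obs} forces $Q\in\mathcal{P}$ while excluding every member of $\mathcal{I}\setminus\mathcal{P}$.

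The gap is in your final consolidation step, and it is not merely a matter of bookkeeping: the structural machinery you propose to rerun is simply not available here. Lemma \ref{306} (and Lemma \ref{10} behind it) requires $Q\notin\textnormal{var}^{\sharp}(A)$ for \emph{every} $Q\in\mathcal{I}_4$, and Theorem \ref{338} requires excluding every $Q\in\mathcal{I}_2$; but $\mathcal{I}_4$ and $\mathcal{I}_2$ both meet $\mathcal{P}$ (for instance $C_{4,*}$, $C_{3}^{\mathsf{p}}$, $C_{3,*}^{\mathsf{p}}$ with $|p|>2$, $W_{\nu_i}^{p,p^{-1}}$, $\mathcal{G}_{2,\gamma}^{p,p^{-1}}$, $\mathcal{G}_{3,\tau}^{1,1}$ all lie in $\mathcal{I}_4\cup\mathcal{I}_2$ and have colength exactly $4$). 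When $l_n^{\sharp}(A)=4$, one such algebra \emph{must} belong to $\textnormal{var}^{\sharp}(A)$, so the hypotheses of the very lemmas you want to apply fail, and the decomposition (\ref{DecompSpecial}) need not hold. The paper avoids all of this with a short identity-theoretic argument: since $P\in\textnormal{var}^{\sharp}(A)$ gives $P_n^{\sharp}\cap\textnormal{Id}^{\sharp}(A)\subseteq P_n^{\sharp}\cap\textnormal{Id}^{\sharp}(P)$ and $l_n^{\sharp}(P)=l_n^{\sharp}(A)=4$ for $n\geq k$, Remark \ref{obs} forces equality of all multiplicities, hence of codimensions, hence $P_n^{\sharp}\cap\textnormal{Id}^{\sharp}(A)=P_n^{\sharp}\cap\textnormal{Id}^{\sharp}(P)$ for all $n\geq k$; the degrees $n<k$ are then absorbed by an explicitly constructed nilpotent algebra $N$ of index $k$ whose multilinear identities in degree $l\leq k-1$ are exactly $P_l^{\sharp}\cap\textnormal{Id}^{\sharp}(A)$. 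This yields $P_n^{\sharp}\cap\textnormal{Id}^{\sharp}(A)=P_n^{\sharp}\cap\textnormal{Id}^{\sharp}(P\oplus N)$ in every degree, i.e.\ $A\sim_{T_G^*}P\oplus N$, with no further structure theory. You should replace your consolidation step by this argument.
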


\begin{proof}
    It is enough to prove that 1) implies 2) so, suppose that there exists a constant $k>0 $ such that $l_n^{\sharp}(A)= 4$, for all $n\geq k$. In this case, if $B$ is any $(G, \ast)$-algebra in the set $\mathcal{I}$ defined in (\ref{ConjuntoI}) such that $l_n^{\sharp}(B)\geq 5$ then $B$ does not belong to $\textnormal{var}^{\sharp}(A).$ Moreover, since $l_n^{\sharp}(A)= 4$, $n\geq k$, by  Theorem \ref{157}, we have that $P\in \textnormal{var}^{\sharp}(A)$, for some $P\in \mathcal{P}$. Therefore, $P_n^\sharp  \cap \textnormal{Id}^\sharp(A) \subseteq P_n^\sharp \cap \textnormal{Id}^\sharp(P),$ for all $n.$

  It is known that there exists a nilpotent \((G, * )\)-algebra \( N \) whose \( T_G^* \)-ideal of identities is given by
\[
I = \left\langle P_l^\sharp  \cap \textnormal{Id}^\sharp(A),\ x_{1,g_{i_1}}^{\epsilon_1} \cdots x_{k,{g_{i_k}}}^{\epsilon_k} \ \middle| \ 1 \leq l \leq k-1, \epsilon_i\in\{+,-\}, g_{i_j}\in G\right\rangle_{T_G^*}.
\]

By definition of $I$, it is clear that
\[
P_n^\sharp \cap \textnormal{Id}^\sharp(A) \subseteq P_n^\sharp \cap \textnormal{Id}^\sharp(P) \cap \textnormal{Id}^\sharp(N) = P_n^\sharp \cap \textnormal{Id}^\sharp(P \oplus N), \text{ for all } n.
\]

Furthermore, we observe that any multilinear identity of degree \( n < k \) of \( N \) is a consequence of the identities \( P_l^\sharp \cap \textnormal{Id}^\sharp(A) \), for all \( 1 \leq l \leq k-1 \). Consequently,
\[
P_n^\sharp \cap \textnormal{Id}^\sharp(P) \cap \textnormal{Id}^\sharp(N) \subseteq P_n^\sharp \cap \textnormal{Id}^\sharp(N) \subseteq P_n^\sharp \cap \textnormal{Id}^\sharp(A), \text{ for all } n < k.
\]

Now, we observe that for \( n \geq k \) we have $4 = l_n^\sharp(P) \leq l_n^\sharp(A) = 4$, and so $P_n^\sharp \cap \textnormal{Id}^\sharp(P) = P_n^\sharp \cap \textnormal{Id}^\sharp(A).$ Since \( N \) is nilpotent of index \( k \), we obtain
$P_n^\sharp \cap \textnormal{Id}^\sharp(A) = P_n^\sharp \cap \textnormal{Id}^\sharp(P \oplus N)$, for all $n \geq k$. Therefore, $P_n^\sharp \cap \textnormal{Id}^*(A) = P_n^\sharp \cap \textnormal{Id}^\sharp(P \oplus N)$, for all $n$, and hence \( A \sim_{T_G^*} P \oplus N. \)
\end{proof}

The previous ideas can also be applied in the case where $|G|$ is odd. In this case, we may consider the set ${\mathcal{L}}$ consisting of all $(G,*)$-algebras defined in $\mathcal{I}$ that do not depend on the existence of an element of order $2$ in the group. Therefore, we have an analogous version of Lemma \ref{306}, where $A$ is $T_G^*$-equivalent to a $(G,*)$-algebra of type $B=(F+(J_{11})_1)\oplus (\underset{g\in G- \{1\}}{\bigoplus B_g}) $, where $B_g=F +(J_{11})_1^+ +(J_{11})_g \in \textnormal{var}^{\sharp}(M_{g,\rho})$ and $F+(J_{11})_1$ is a $(G,*)$-algebra with trivial $G$-grading. Thus, following the steps of the proof of the previous theorem, we may conclude the result below.

	\begin{theorem} \label{177} Let $G$ be a finite abelian group of odd order and $A$ be a finite-dimensional $(G,*)$-algebra. The following conditions are equivalent
		\begin{enumerate}
			\item[1)] $Q\notin \textnormal{var}^{\sharp}(A)$, for all $Q\in \mathcal{L}.$
			\item[2)] $A$ is $T_G^*$-equivalent to one of the following $(G,*)$-algebras: $N, C\oplus N,C_{3,*}\oplus N$, $\mathcal{G}_{2,\tau}^{1,1} \oplus N$, $D_1\oplus N$ or $D_1\oplus D_2\oplus N,$    for some $D_i\in \mathcal{D}, i\in \{1,2\},$ with $D_1\neq D_2.$
			\item[3)] $l_{n}^{\sharp}(A)\leq 3,$ for $n$ large enough.
		\end{enumerate}
	\end{theorem}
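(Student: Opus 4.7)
The plan is to mirror the argument of Theorem~\ref{157}, replacing the exclusion set $\mathcal{I}$ by its odd-order counterpart $\mathcal{L}$ and invoking an odd-order version of Lemma~\ref{306}. The implications $2)\Rightarrow 3)$ and $3)\Rightarrow 1)$ are the easy directions: for $2)\Rightarrow 3)$, one reads off the colength of each listed building block from the cocharacter computations of Section~4 together with the values quoted after Lemma~\ref{18}; for $3)\Rightarrow 1)$, every $Q\in\mathcal{L}$ has $l_n^{\sharp}(Q)\geq 4$ for large $n$ by construction, so $Q\in\textnormal{var}^{\sharp}(A)$ would force $l_n^{\sharp}(A)\geq 4$ via Remark~\ref{obs}~$1)$.

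For $1)\Rightarrow 2)$, I would argue as follows. Since $\mathcal{L}$ contains the algebras from Remark~\ref{12} that are relevant when $|G|$ is odd, the hypothesis forces $(FC_2)_{*},\ FC_p,\ M_{g,\rho}\notin\textnormal{var}^{\sharp}(A)$ for every prime $p\mid|G|$ and every $g\in G$. Theorem~\ref{1} then yields polynomial growth together with a decomposition $A\sim_{T_G^*}B_1\oplus\cdots\oplus B_m$, where each $B_i$ is either nilpotent or of the form $F+J(B_i)$. For a non-nilpotent $B_i$, Lemma~\ref{3} kills $J_{10}$ and $J_{01}$, and the odd-order analogue of Lemma~\ref{306} (simpler, since $G$ has no elements of order $2$) produces
$$B_i\sim_{T_G^*}\bigl(F+(J_{11})_1\bigr)\oplus\bigoplus_{g\in G-\{1\}}\bigl(F+(J_{11})_1^{+}+(J_{11})_g\bigr)\oplus J_{00}.$$
Each block $F+(J_{11})_1^{+}+(J_{11})_g$ lies in $\textnormal{var}^{\sharp}(M_{g,\rho})$ by Lemma~\ref{10}, and the exclusion of $N_3^{\mathsf{g}},\,U_3^{\mathsf{g}},\,A_2^{\mathsf{g}}$ (all in $\mathcal{L}$) restricts it via Theorem~\ref{31} to one of $N,\ C\oplus N,\ C_2^{\mathsf{g}}\oplus N,\ C_{2,*}^{\mathsf{g}}\oplus N$, or $C_2^{\mathsf{g}}\oplus C_{2,*}^{\mathsf{g}}\oplus N$. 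The trivially graded block $F+(J_{11})_1$ is a $*$-algebra, and since $\mathcal{L}$ includes the trivially graded members of $\mathcal{I}_2$ (in particular $U_3^{*},\,A_2^{*},\,C_{4,*}$, together with $C_{3,*}\oplus K$, $\mathcal{G}_{2,\tau}^{1,1}\oplus K$, and $\mathcal{G}_{2,\tau}^{1,1}\oplus C_{3,*}$), it must be $T_G^*$-equivalent to one of $N,\ C\oplus N,\ C_{2,*}\oplus N,\ C_{3,*}\oplus N$, or $\mathcal{G}_{2,\tau}^{1,1}\oplus N$. Taking the direct sum of these possibilities and discarding those ruled out by the analogues of $\mathcal{I}_5$ inside $\mathcal{L}$ leaves exactly the list in item $2)$.

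The principal obstacle is the bookkeeping: one must confirm that $\mathcal{L}$ contains precisely the odd-order-relevant members of $\mathcal{I}$, so that the odd-order analogues of Theorem~\ref{338}, Lemma~\ref{306}, and the $\mathcal{I}_5$-type collapse all remain valid without ever invoking an order-$2$ element. In particular one has to check both that every algebra in $\mathcal{L}$ really does have $l_n^{\sharp}\geq 4$ (so that direction $3)\Rightarrow 1)$ goes through with this exact set) and that dropping any of them would allow a variety not covered by item $2)$. Once this accounting is in place, the rest of the argument is a direct transcription of the even-order proof.
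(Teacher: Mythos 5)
Your proposal is correct and follows essentially the same route as the paper, which itself only sketches this theorem by stating that the ideas of Theorem~\ref{157} apply verbatim with the odd-order analogue of Lemma~\ref{306}, namely the decomposition $B=(F+(J_{11})_1)\oplus\bigoplus_{g\in G-\{1\}}(F+(J_{11})_1^{+}+(J_{11})_g)$ with each graded block controlled by Theorem~\ref{31} and the trivially graded block by the $*$-algebra classification. Your write-up is in fact more detailed than the paper's own argument, and the bookkeeping caveat you raise is exactly the point the paper glosses over.
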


In what follows we consider the subset $\mathcal{R}$ of $\mathcal{L}$ consisting by all algebras having $(G,*)$-colength $4$, that is,
$$\mathcal{R}= \{ \mathcal{G}_{3,\tau}^{1,1}, \mathcal{G}_{2,\imath_1}^{1,g},\mathcal{G}_{2,\imath_2}^{g,g}, \mathcal{G}_{2,\gamma}^{g,g^{-1}}, C_{3}^{\mathsf{g}},C_{3,*}^{\mathsf{g}}, C_{4,*}, S_1 , C_{3,*}\oplus K, \mathcal{G}_{2,\tau}^{1,1}\oplus K,$$ $$ \mathcal{G}_{2,\tau}^{1,1}\oplus C_{3,*},  U_3^*,  N_3^*, W_{\nu_i}^{g,g^{-1}}, W_{\nu_j}^{1,g} \},$$ where $g\in G- \{1\}$, $\imath = \{\tau, \gamma, \psi\}$, $\imath_1\in \imath - \{\psi\},\imath_2\in \imath - \{\gamma\}$, $S_1\in \mathcal{D}_2, $ $K\in \mathcal{D}- \{C_{2,*}\}$, $i\in \{1,2,3\}$ and $j=\{2,3\}$.

As a consequence of the previous result we have the following corollary, which can be proved in the same way as in Corollary \ref{coro62}.

 \begin{corollary}\label{coro64}
Let $G$ be a finite abelian group of odd order and $A$ be a finite-dimensional $(G,*)$-algebra.     Then
		 $l_{n}^{\sharp}(A)= 4,$ for $n$ large enough if and only if $A\sim_{T_G^*} R\oplus N$, for some $R\in \mathcal{R}.$
 \end{corollary}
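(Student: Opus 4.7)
The plan is to mirror the argument of Corollary~\ref{coro62}, now invoking Theorem~\ref{177} in place of Theorem~\ref{157} and the set $\mathcal{R}\subset\mathcal{L}$ in place of $\mathcal{P}\subset\mathcal{I}$. The reverse implication is straightforward: each $R\in\mathcal{R}$ has $l_n^{\sharp}(R)=4$ for $n$ large enough (by the explicit colength computations carried out in Section~4 together with Lemma~\ref{51} and the remarks that follow it), and adjoining a nilpotent $(G,*)$-algebra $N$ does not affect $l_n^{\sharp}$ once $n$ exceeds the nilpotency index of $N$.

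For the forward direction, suppose $l_n^{\sharp}(A)=4$ for all $n\geq k$. First I would observe, using Remark~\ref{obs}(1), that no $B\in\mathcal{L}$ with $l_n^{\sharp}(B)\geq 5$ can belong to $\textnormal{var}^{\sharp}(A)$; in particular every element of $\mathcal{L}\setminus\mathcal{R}$ is ruled out of $\textnormal{var}^{\sharp}(A)$. On the other hand, since $l_n^{\sharp}(A)>3$, Theorem~\ref{177} (applied in its contrapositive form, item~3 $\Rightarrow$ item~1) forces some $Q\in\mathcal{L}$ to lie in $\textnormal{var}^{\sharp}(A)$. Combining the two observations, this witness $Q$ must have $l_n^{\sharp}(Q)\leq 4$, and since every algebra in $\mathcal{L}$ has colength at least $4$, we deduce $Q\in\mathcal{R}$. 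Set $R:=Q$, so that $P_n^{\sharp}\cap\textnormal{Id}^{\sharp}(A)\subseteq P_n^{\sharp}\cap\textnormal{Id}^{\sharp}(R)$ for all $n$.

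Following the construction in Corollary~\ref{coro62}, I then build a nilpotent $(G,*)$-algebra $N$ whose $T_G^*$-ideal is
\[
I=\bigl\langle P_l^{\sharp}\cap\textnormal{Id}^{\sharp}(A),\ x_{1,g_{i_1}}^{\epsilon_1}\cdots x_{k,g_{i_k}}^{\epsilon_k}\ \big|\ 1\leq l\leq k-1,\ \epsilon_i\in\{+,-\},\ g_{i_j}\in G\bigr\rangle_{T_G^*},
\]
so that $P_n^{\sharp}\cap\textnormal{Id}^{\sharp}(A)\subseteq P_n^{\sharp}\cap\textnormal{Id}^{\sharp}(R\oplus N)$ for every $n$. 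In the range $n<k$, every multilinear identity of $N$ is a consequence of the low-degree identities of $A$ baked into $I$, which yields the reverse inclusion. For $n\geq k$, the equality $l_n^{\sharp}(R)=l_n^{\sharp}(A)=4$ combined with Remark~\ref{obs}(3) forces $P_n^{\sharp}\cap\textnormal{Id}^{\sharp}(A)=P_n^{\sharp}\cap\textnormal{Id}^{\sharp}(R)$, and the nilpotency of $N$ of index $k$ gives $P_n^{\sharp}\cap\textnormal{Id}^{\sharp}(R)=P_n^{\sharp}\cap\textnormal{Id}^{\sharp}(R\oplus N)$. Hence the two $T_G^*$-ideals coincide in every degree, i.e., $A\sim_{T_G^*}R\oplus N$.

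The main obstacle is really bookkeeping rather than a deep new ingredient: one must verify that $\mathcal{R}$ exactly captures those members of $\mathcal{L}$ whose $n$th colength equals $4$, which rests on the explicit cocharacter decompositions already tabulated for each candidate algebra. Once that catalog is accepted, the present corollary follows by carefully splicing the inclusions of $T_G^*$-ideals over the ranges $n<k$ and $n\geq k$, exactly as in the even-order case.
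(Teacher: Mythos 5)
Your proposal is correct and follows essentially the same route as the paper, which simply states that Corollary~\ref{coro64} ``can be proved in the same way as in Corollary~\ref{coro62}'': you exclude the members of $\mathcal{L}$ of colength at least $5$, extract a witness $R\in\mathcal{R}$ via the contrapositive of Theorem~\ref{177}, and splice the $T_G^*$-ideal inclusions over $n<k$ and $n\geq k$ using the auxiliary nilpotent algebra, exactly as in the even-order case. No substantive differences from the paper's intended argument.
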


\end{document}